\newcounter{theorem}
\def\openthm#1#2{\refstepcounter{theorem}\bigskip

{\noindent\bf#1~\thetheorem\if#2!{. }\else{ (#2).}\fi}
\it}
\def\thmskip{}
\newenvironment{theorem}[1][!]{\openthm{Theorem}{#1}}{\thmskip}
\newenvironment{lemma}[1][!]{\openthm{Lemma}{#1}}{\thmskip}
\newcounter{remark}
\def\openrem#1#2{\refstepcounter{remark}\bigskip
{\noindent \it \bfseries#1~\theremark\if#2!{. }\else{ (#2). }\fi}}
\newenvironment{remark}[1][!]{\openrem{Remark}{#1}}{\qed}
\newcounter{algorithm}
\def\openalg#1#2{\refstepcounter{algorithm}\bigskip
{\noindent \it \bfseries#1~\thealgorithm\if#2!{. }\else{ (#2). }\fi}}
\newenvironment{algorithm}[1][!]{\openalg{Algorithm}{#1}}{\qed}
\newcounter{definition}
\def\opendef#1#2{\refstepcounter{definition}\bigskip
{\noindent \bf#1~\thedefinition\if#2!{. }\else{ (#2). }\fi}\it}
\def\R{\mathbb{R}}
\def\N{\mathbb{N}}
\def\Z{\mathbb{Z}}
\def\dx{\,{\rm d}x}
\def\dt{\,{\rm d}t}
\def\<{\langle}
\def\>{\rangle}
\def\argmin{{\rm argmin}}
\def\conv{{\rm conv}}
\def\eps{\varepsilon}
\def\Bs{\mathcal{B}}
\def\Es{\mathcal{E}}
\def\Etot{E}
\def\Us{\mathcal{U}}
\def\As{\mathcal{A}}
\def\Ds{\mathcal{D}}
\def\Ts{\mathcal{T}}
\def\Ps{\mathcal{P}}
\def\Ss{\mathcal{S}}
\def\Ys{\mathcal{Y}}
\def\Ks{\mathcal{K}}
\def\ub{\boldsymbol{u}}
\def\yb{\boldsymbol{y}}
\def\zb{\boldsymbol{z}}
\def\vb{\boldsymbol{v}}
\def\wb{\boldsymbol{w}}
\def\eb{\boldsymbol{e}}
\def\Fb{\boldsymbol{F}}
\def\fb{\boldsymbol{f}}
\def\gb{\boldsymbol{g}}
\def\Gb{\boldsymbol{G}}
\def\epsb{\boldsymbol{\eps}}
\def\thetab{\boldsymbol{\theta}}
\def\a{{\rm a}}
\def\qc{{\rm qc}}
\newcommand{\smfrac}[2]{{\textstyle \frac{#1}{#2}}}
\numberwithin{equation}{section}
\begin{document}

\title{A Posteriori Error Estimates for Energy-Based Quasicontinuum Approximations of a Periodic Chain}


\author{Hao Wang}
\address{Hao Wang, Oxford University Mathematical Institute,
  24-29 St Giles', Oxford, OX1 3LB, UK}
\email{wangh@maths.ox.ac.uk}

\date{\today}

\begin{abstract}
We present a posteriori error estimates for a recently developed
atomistic/continuum coupling method, the Consistent Energy-Based QC
Coupling method. 
The error estimate of the deformation gradient combines a residual estimate and an a posteriori stability
analysis. The residual is decomposed into the residual due to the
approximation of the stored energy and that due to the
approximation of the external force, and are bounded in negative
Sobolev norms. In addition, the error estimate of the total energy
using the error estimate of the deformation gradient is also
presented. Finally, numerical experiments are provided to illustrate
our analysis.
\end{abstract}

\maketitle

\section{Introduction}
Quasicontinuum (QC) methods, or in general atomistic/continuum
coupling methods, are a class of multiscale methods for coupling an
atomistic model of a solid with a continuum model. These methods have
been widely employed in computational nano-technology, where a fully
atomistic model will result in a prohibitive computational cost but an
exact configuration is required in a certain region of the
material. In this situation, atomistic model is applied in the region
which contains the defect core to retain certain accuracy, while continuum model is applied
in the far field to reduce the computational cost. 

A number of QC methods have been developed in the past decades and are
classified in two groups: energy-based coupling methods and
force-based coupling methods. Despite the fact that the force-based methods are
easy to implement and extend to higher dimensional cases, energy-based
methods have certain advantages. For example, the forces derived from
an energy potential are conservative which could leads to a faster
convergence rate in computation, and the energy of an atomistic system can also be a
quantity of interest in real application. However, consistent
energy-based coupling methods can be tedious and restrictive on the
shape of the coupling interface in more than one
dimension (see \cite{Shimokawa:2004, E:2005a}) and it was not until recent that a practical consistent energy-based coupling method
was created by Shapeev \cite{Shapeev2010a}, which is the Consistent Energy-Based QC
Coupling method that we analyze in the present paper.

A number of literature on the rigorous analysis of different QC methods have been proposed since the
first one by Lin \cite{Lin:2003a}. However, most of the analysis are
on the a priori error analysis, and only a few
are on the a posteriori error analysis. Arndt and Luskin give a posteriori error estimates for the
QC approximation of a Frenkel-Kontorova model \cite{Arndt2008a, Arndt2008b, Arndt2007a}. A goal-oriented
approach is used and error estimates on different quantity of
interests, each of which is essentially the difference
between the values of a linear functional at the atomistic solution and
the QC solution, are proposed. The estimates are decomposed into two
parts, one is used to correctly chose the atomistic region and another
is used to optimally choose the mesh in the continuum region. Serge et
al. \cite{Serge2007a} give error estimates, also through a goal-oriented approach, of the original energy-based QC
approximation, whose consistency is not guaranteed. Both of the above
works employ the technique of deriving and solving dual
problems as a result of the goal-oriented approach. Ortner and
S{\"u}li \cite{Ortner:2008a} derive an a posteriori error indicator for a global norm
through a similar approach as ours. However, the QC method analyzed
there does not contain an approximation of the stored energy which is
essentially different from the QC method we are interested.

The present paper provides the a posteriori
error analysis for the Consistent Energy-Based QC
Coupling method \cite{Shapeev2010a} for a one dimensional periodic chain with
nearest and next nearest neighbour interactions. The
formulation of the QC approximation has the feature that the finite
element nodes in the continuum region are not restricted to reside at
the atomistic positions, which creates the situation that interaction
bonds often cross the element boundaries, which is
common in two dimensional formulation. We then derive the
residual in negative Sobolev norms and then the a posteriori stability
constant as a function of the QC solution. The error estimator of the
deformation gradient in $L_2$-norm is then obtained by combining these
two analysis. In addition, we derive an error estimator for the total
energy difference by using that of the deformation gradient. It should
be remarked that though both of the error estimators are global
quantities, they consist of contributions from element. As a result,
an adaptive mesh refinement algorithm is developed and applied to a
problem that mimics the vacancy in the two dimensional case, and the
numerical results are presented.

\subsection{Outline}
In Section \ref{Sec:A_and_Q_Model}, we first formulate the atomistic
model through both a continuous approach, i.e., the deformation and the
displacement are considered as continuous functions on the reference
lattice, and a discrete approach, which is always taken in previous
literature. We then formulate the Consistent Energy-Based QC Coupling
method in one dimensional setting.

In Section \ref{Sec:Residual_Analysis}, we derive the residual
estimates for the Consistent Energy-Based QC Coupling method in a
negative Sobolev norm. The residual is split into two part, one is due
to the approximation of the stored energy and the other is due to the
approximation of the external force. 

In Section \ref{Sec:Stability}, we give the a posteriori stability
analysis.

In Section \ref{Sec:A_Posteriori_Error}, we combine the residual
estimate and the stability analysis to give the a posteriori
error estimate of the deformation gradient in $L_2$-norm and that of
the total energy.

In Section \ref{Numerics}, we present a numerical example to
complement our analysis.

\section{Model Problem and QC Approximation}
\label{Sec:A_and_Q_Model}
\subsection{Atomistic Model}
\label{subsec:AModel}
As opposed to taking only a discrete point of view in many QC researches, we use both continuous
functions and discretized vectors to denote the displacement and
the deformation. The reason for doing this is that the
Consistent Energy-Based QC coupling method, which we analyze in this paper,
is easily formulated through the continuous approach, while discrete
formulations could make the residual analysis of the external forces
much easier.

For an infinite reference lattice with atomistic spacing $\eps$, we
make the partition $\Ts^\eps = \{T^\eps_\ell\}_{\ell= -\infty}^{\infty}$ of the domain $\R$ such that $\R =
\cup_{\ell =-\infty}^\infty T^\eps_\ell$ and $T^\eps_\ell = [(\ell-1)\eps,
\ell \eps]$. We then define the displacement and deformation of this
infinite lattice to be continuous piecewise
linear functions $u$, $y \in \Ps_1(\Ts^\eps) \cap C^0(\R)$.  We use
$\ub$ and $\yb$ to denote the vectorizations of $u$ and $y$ such that
$u_\ell = u(\ell\eps)$ and $y_\ell = y(\ell\eps)$. We know that $u_\ell$ and
$y_\ell$ are the physical displacement and deformation of atom $\ell$
respectively. 

To avoid technical difficulties with boundaries, we apply periodic
boundary conditions. We rescale the problem so that there are $N \in
\N$ atoms in each period and $\eps = 1/N$, which implies that $u$ and
$y$ are 1-periodic functions and $\ub$ and $\yb$ are $N$-periodic vectors. We also impose a zero-mean
condition to the admissible space of displacements, which is defined
to be
\begin{equation}
\Us = \big\{ u \in \Ps_1(\Ts^\eps) \cap C^0(\R): u(x+1) = u(x) \text{ and }
  \int_0^1 u(x) \dx =0\big\}.
\label{Def:ACSpaceDisplace}
\end{equation}
The set of admisible deformations is given by
\begin{equation}
\Ys = \big\{ y \in \Ps_1(\Ts^\eps) \cap C^0(\R): y(x) = Fx +u(x), u \in \Us \big\},
\label{Def:ACSetDeform}
\end{equation}
where $F>0$ is a given macroscopic deformation gradient.

As we mentioned above, it is necessary in the analysis of the external
forces to employ the
discretization of the displacement and the
deformation. Therefore, by the relationship between $u, y$ and their
vectorizations $\ub,
\yb$, the discrete space of displacement and the admissible set of
deformation are defined by
\begin{equation}
\Us^\eps = \{ \ub \in \R^\Z: u_{\ell+N} = u_\ell, \eps\sum_{\ell =1}^Nu_\ell
= 0\},
\label{Def:ADSpaceDisplace}
\end{equation}
and
\begin{equation}
\Ys^\eps = \{ \yb \in \R^\Z: y_{\ell+N} = F \ell \eps+u_\ell, \ub \in
\Us^\eps\},
\label{Def:ADSpaceDeform}
\end{equation}
where the zero-mean condition on the displacements, i.e., $\eps\sum_{\ell =1}^Nu_\ell
= 0$ is obatined by applying the trapezoidal rule to evaluate the integration
$\int_0^1u(x) \dx$ with respect to the partition $\Ts^\eps$ and using the
periodicity of $u$.

For simplicity of analysis, we adopt a pair
interaction model and assume that only nearest neighbours and the
next-nearest neighbours interact. With a slight abuse of notation, the {\em stored atomistic energy}
(per period) of an admissible deformation is then given by
\begin{align}
\Es_\a(y) &:= \eps \sum_{\ell=1}^N
                   \phi\Big(\frac{y(\ell\eps)-y((\ell-1) \eps)}{\eps}\Big)+\eps
                   \sum_{\ell=1}^N
                   \phi\Big(\frac{y(\ell\eps)-y((\ell-2) \eps)}{\eps}\Big)  \nonumber\\
                   &= \eps \sum_{\ell=1}^N
                   \phi\Big(\frac{y_\ell-y_{\ell-1}}{\eps}\Big)+\eps
                   \sum_{\ell=1}^N
                   \phi\Big(\frac{y_\ell-y_{\ell-2}}{\eps}\Big) =:\Es_\a(\yb),
\label{Eq:AStoredEnergy}
\end{align}
where $\phi \in
C^3((0,+\infty))$ is a Lennard-Jones type interaction potential. We
assume that there exists $r_\ast >0$ such that $\phi$ is convex in $(0,
r_\ast)$ and concave in $(r_\ast, +\infty)$.

For the formulation of the external energy, we first define the linear nodal
interpolation operator $I_\eps: C^0(\R) \rightarrow \Ps_1(\Ts^\eps)
\cap C^0(\R)$ such that 
\begin{equation}
I_\eps g(\ell \eps) = g(\ell \eps) \quad \forall g \in C^0(\R).
\end{equation}
Then given a dead load $f \in \Us$, we define the {\em external
  energy} (per period) caused by $f$ to be
\begin{equation}
\<f, u\>_\eps := \int_0^1 I_\eps(fu) dx = \sum_{\ell=1}^N \eps f_\ell
u_\ell =:  \<\fb, \ub\>_\eps,
\end{equation}
where $\fb$ and $\ub$ are the vectorizations of the external force $f$
and the displacement $u$ according to $\Ts^\eps$.

Thus, the {\em total energy} (per period) under
a deformation $y \in \Ys$ is given by
\begin{displaymath}
  \Etot_\a(y;F) = \Es_\a(y) - \<f, u\>_\eps,
\end{displaymath}
as $u$ is determined by $y$ and $F$. However, in our analysis, we
always assume that $F$ is given and as a result, we simply write $
\Etot_\a(y;F)$ as $  \Etot_\a(y)$.

The problem we wish to solve is to find 
\begin{equation}
  y_\a \in \argmin \Etot_\a(\Ys),
\label{Eq:LocalMinimaA}
\end{equation}
where $\argmin$ denotes the set 
of local minimizers.

\subsection{Notation of Partitions, Norms and Discrete Derivatives}
Though it is natural to introduce the QC approximation after the
atomistic model, we decide to pause here and introduce some important
notation that are used throughout the paper in order to make the
flow of the paper more smooth and save some space.

In Section \ref{subsec:AModel}, we have introduced the partition $\Ts^\eps$ of
the domain $\R$. We now fix the notation for a generalized partition.

Let $\Ts^{m} = \{T_k^{m}\}_{k = -\infty}^{\infty}$ be a given
partition such that $T_k^{m} = [x^{m}_{k-1}, x^{m}_{k}]$, where
$x^{m}_k > x^{m}_{k-1}$ are the nodes of the partition. We
denote the size (or the length) of the $k$'th element by $\eps^{m}_k :=
|T_k^{m}| = x^{m}_k - x^{m}_{k-1}$. We also define the mesh size vector $\epsb^{m}$ such
that $\epsb^{m}:= (\eps^{m}_k)_{k=-\infty}^{\infty} \in (\R^+)^\Z$.

Given a partition $\Ts^{m}$ and a function $g \in C^0(R)$, we define
the $\Ps_1$ direct interpolation $I_m : C^0(\R) \rightarrow
\Ps_1(\Ts^m) \cap C^0(\R)$ by 
\begin{equation}
\label{Eq:CLinNodInterp}
(I_mg)(x^m_i)  = g(x^m_i) \quad \forall g \in C^0(\R),
\end{equation}
and $I_mg$ is often denoted by $g_m$. We also denote the vectorization
of $g \in C^0(\R)$ with respect to $\Ts^m$ by $\gb^m$ such that
\begin{equation}
\label{Eq:ALinNodInterp}
g^m_j = g(x^m_j).
\end{equation}

Let $\mathcal{D}$ be a subset of $\Z$. For a vector $\vb \in \R^\Z$
and a partition $\Ts^{m}$, we define the (semi-)norms
\begin{align*}
\Vert \vb \Vert _{\ell^{p}_{\epsb^{m}}(\Ds)} = 
\left\{
\begin{array}{l l}
\Big(\sum_{\ell \in \mathcal{D}} \eps^{m}_\ell|v_\ell|^p
\Big)^{1/p},  & 1 \le p < \infty,\\
\max_{\ell \in \mathcal{D}} |v_\ell |,  & p = \infty.
\end{array} \right.
\end{align*}
In particular, if $n_{m}$ is the number of the nodes of
$\Ts^{m}$ that are in $[0,1]$, we simply define
\begin{align*}
\Vert \vb \Vert _{\ell^{p}_{\epsb^{m}}} = 
\left\{
\begin{array}{l l}
\Big(\sum_{\ell=1}^{n_{m}} \eps^{m}_\ell |v_\ell|^p
\Big)^{1/p},  & 1 \le p < \infty,\\
\max_{\ell = 1, \ldots, n_{m}} |v_\ell |,  & p = \infty.
\end{array} \right.
\end{align*}

We now define discrete derivatives. Suppose $v \in
C^0(\R)$ and $\vb^m$ is its vectorization according to
$\Ts^{m}$. We define the first and second order discrete derivative
${\vb^{m}}'$ by
\begin{equation}
{v^{m}}'_k = \frac{v^{m}_j-v^{m}_{j-1}}{\eps^{m}_j}, \text{ and },
{v^{m}}''_k = \frac{{v^{m}}'_{j+1}-{v^{m}}'_j}{\bar{\eps}^{m}_j},
\end{equation}
where $\bar{\eps}^{m}_j:=\smfrac{1}{2}(\eps^h_j+\eps^h_{j+1})$.

It can be proved that for $v^{m} \in
C^0(\R) \cap \Ps_1(\Ts^{m})$ and $\vb^{m}$ being its
vectorization, we have the identity
\begin{equation}
\label{Eq:FirstDerivativeL2Identity}
\| {v^{m}}' \|_{L^p[0,1]} = \Vert {\vb^{m}}' \Vert _{\ell^{p}_{\epsb^{m}}}.
\end{equation}

Since $\Ts^\eps$ is special and uniform, we simply use
$\epsb$ and $\eps$ to denote its mesh size vector and mesh size
without superscripts and subscripts.

In addition to these, we denote the left and the right limit of an open interval $\omega$ by $L_\omega$ and $R_\omega$, which are also used later in our analysis.

\subsection{QC Approximation}
\label{subsec:QCApprox}
The QC approximation we analyze in this paper is essentially the
Consistent Atomistic/Continuum Coupling method developed in
\cite{Shapeev2010a}. We briefly redevelop this method in 1D so
that it is easily understood and enough for us to carry out the
analysis.

We first decompose the reference lattice, which occupies $\R$, into an
atomistic region $\Omega_\a$, which should contain any 'defects', and a continuum region
$\Omega_c$, where the solution is expected to be smooth. Moreover, we
assume $\Omega_\a$ to be a union of open intervals and $\Omega_c$ to be
a union of closed intervals, and $\Omega_\a \cup
\Omega_c = \R$. Since we impose periodic boundary conditions on the
displacement and notationally it is easier to assume the
atomistic region is away from the boundary of the period we analyze,
we make the following assumptions on $\Omega_\a$ and $\Omega_c$:
\begin{itemize}
\item $\Omega_\a$ and $\Omega_c$ appear periodically with exactly
  period of $1$, i.e, if $x \in \Omega_\a$ then $x+1 \in \Omega_\a$ and the
  same for $\Omega_c$.
\item $\exists \delta>2\eps$ such that $\big((0,\delta) \cup (1-\delta,1)\big)
  \subset \Omega_c$, i.e., the atomistic region is contained in the
  'middle' of the chain.
\end{itemize}
Note that, there is no such a restriction, that the interfaces where different
regions meet should lie on the positions of the atoms, i.e., it is not
necessary that $\Omega_c \cap
\Omega_\a \in \eps \Z$, which was always assumed in
previous modeling and analysis of 1D QC method.

Then, in order to reduce the number of degrees of freedom, we make the
partition $\Ts^h = \{ T^h_k\}_{k=-\infty}^{\infty}$ of the domain $\R$
according to the above region decomposition of
$\R$ as follows:
\begin{itemize}
\item $T_k^h = [x^h_k,x^h_{k-1}]$ and $T_{k+K} = [x^h_k+1, x^h_{k-1}+1] =
  [x^h_{k+K}, x^h_{k-1+K}]$, which implies that the partition is
  $K$-periodic with $|\cup_{k=1}^KT_k|=1$, 
  and there are $K$ elements in each $[0,1]$. We also assume that
  $x^h_1$ is the left most node and $x^h_K$ is the right most
  node in $[0,1]$.
\item If $\ell \eps \in \Omega_\a$, then $\exists i \in \Z$ such that
  $x_i=\ell \eps$, i.e., every position of an atom in the atomistic region
  is a node of this partition.
\item $\partial \Omega_c$ is a node in this partition which means that
  each element is contained in only one of the two regions.
\item $|T_k^h| = \eps^h_k \ge 2\eps$ if $T^h_k \subset \Omega_c$, i.e., the
  size of each element in the continnum region is larger than or equal
  to $2\eps$.
\end{itemize}

We emphasize two definitions
\begin{equation}
\label{Def:lkandthetak}
\ell_k := \max_\ell \{\ell: \ell \eps \le x^h_k \}  \text{ and }
  \theta_k := \frac{x^h_k - \ell_k \eps}{\eps}, 
\end{equation}
which are extensively used in the analysis and significantly simply the
notation. Note that $0 \le \theta_k \le 1$.

Based on this partition of the domain, the QC space of displacement and the QC set of admissible
deformation are defined by
\begin{equation}
\label{Def:QCCSpaceDisplace}
\Us_{\qc} = \big\{u \in \Ps_1(\Ts^h) \cap C^0(\R) : u(x+1) = u(x) \text{ and } \int_0^1u(x) \dx=0\big\},
\end{equation}
and
\begin{equation}
\label{Def:QCCSetDeform}
\Ys_{\qc} = \big\{y \in  \Ps_1(\Ts^h) \cap C^0(\R) : y(x) = Fx+u(x), u
\in \Us_{\qc} \big\}.
\end{equation}

The discrete QC
space of displacement and the QC set of admissible deformation are
defined by
\begin{equation}
\label{Def:QCDSpaceDisplace}
\Us^h_{\qc} = \big\{\ub^h \in \R^\Z : u^h_k = u^h_{k+K}, \forall k \in \Z,
\text{ and } \sum_{k=1}^K\frac{1}{2}(x^h_{k+1}-x^h_{k-1})u^h_k=0\big\},
\end{equation}
and
\begin{equation}
\label{Def:QCDSpaceDeform}
\Ys^h_{\qc} = \big\{\yb^h \in \R^\Z : y^h_k = Fx_k+u^h_k, \ub \in
\Us^h_{\qc} \big\}.
\end{equation}
Note that unlike $\Us^\eps$ and $\Ys^\eps$, in which every vector has
the physical displacements and deformations of the atoms as its components,
$\Us^h$ and $\Ys^h$ only contain vectors whose components are the
values of displacements and deformations at the nodes of $\Ts^h$.

The approach to couple the atomistic and continuum energy is to
associate the energy with interaction bonds. The term {\it bond}
between atoms $i \in \Z$ and $i+r \in \Z$ refer to
the open interval $b = (i\eps,(i+r)\eps)$. In our case, since only nearest
neighbour and next nearest neighbour bonds are taken into account, $r
=1, 2$ only. 

To develop the coupling method, we define the operator $D_\omega y$ for an open interval $\omega
= (L_\omega, R_\omega) \subset \R$ and $y \in C^0(\R)$ such that
\begin{equation}
D_\omega y:= \frac{1}{|\omega|} \big(y(R_\omega)-y(L_\omega)\big).
\end{equation}

If we take any $y \in C^0(\R) \cap W^{1,\infty}(\R)$ as a deformation
(note that this 'deformation' might be non-physical) and a bond $b =
(i\eps,(i+r_b)\eps)$, we can define the atomistic energy contribution
of bond $b$ to the stored energy to be 
\begin{equation}
a_b(y) = \frac{|b \cap \Omega_\a|}{r_b} \phi\big(r_b D_{b \cap
  \Omega_\a} y \big),
\end{equation}
and its continuum energy contribution to the stored energy to be 
\begin{equation}
c_b(y) = \frac{1}{r_b} \int_{b\cap \Omega_c} \phi(\nabla_{r_b}y(x))\dx,
\end{equation}
where $\nabla_{r_b}y = r_by'(x)$.

Since we are only interested in the situation in $[0,1]$, which is
extended periodically to the whole domain, the set of bonds that we will consider is
\begin{equation}
\mathcal{B} = \big\{ (i\eps, (i+r)\eps): r =1,2, i =0,1,\ldots,N-1 \big\}.
\end{equation}

Therefore, coupling the two energy contributions together, the {\em stored QC energy}
(per period) of a deformation $y \in C^0(\R) \cap W^{1,\infty}(\R)$ is then given by
\begin{equation}
\Es_{\qc}(y) = \sum_{b \in B} \big[ a_b(y)+c_b(y) \big],
\label{QCStoredEnergy-1}
\end{equation}
which was shown in \cite{Shapeev2010a} to be a consistent coupling
method, where the definition of  consistency is as follows:
\begin{equation}
\Es'_\a(Fx)[v] = \Es'_{\qc}(Fx)[v]=0 \quad \forall v \in C^0(\R) \cap W^{1,\infty}(\R).
\end{equation}

Given a dead load $f \in \Us$ the QC approximation of the {\em external energy} (per period) caused by $f$ is given by
\begin{equation}
\<f,u_h\>_h :=  \int_0^1 I_h (f u_h) \dx =\sum_{k = 1}^K \frac{1}{2} (x^h_{k+1}-x^h_{k-1})
f^h_ku^h_k = : \<\fb^h, \ub^h\>_h,
\end{equation}
where $I_h$ is the linear nodal interpolation with respect to $\Ts^h$
, and $\fb^h$ and $\ub^h$ are the vectorizations of
$f$ and $u_h$.

Thus, the {\em total energy} (per period) of
a deformation $y_h \in \Ys_{\qc}$ is given by
\begin{displaymath}
  \Etot_{\qc}(y_h;F) = \Es_{\qc}(y_h) - \<f, u_h\>_h.
\end{displaymath}
For the same reason that $F$ is given, we write $\Etot_{\qc}(y_h;F)$ as $\Etot_{\qc}(y_h)$.
The problem we wish to solve is to find
\begin{equation}
\label{LocalMinimumQC}
  y_{\qc} \in \argmin \Etot_{\qc}(\Ys_{\qc}).
\end{equation}

\section{Residual Analysis}
\label{Sec:Residual_Analysis}
In this section, we bound the residual in a negative Sobolev norms. We
equip the space $\Us$ with the Sobolev norm
\begin{displaymath}
\|v\|_{\Us^{1,2}} = \Vert v' \Vert _{L^2[0,1]}, \quad \text{ for } v \in \Us,
\end{displaymath}
and denote it by
$\Us^{1,2}$. The norm on the dual $\Us^{-1,2} :=
(\Us^{1,2})^\ast$ is defined by
\begin{displaymath}
\| T \|_{\Us^{-1,2}} := \sup_{\substack{v \in \Us \\ \| v
    \|_{\Us^{1,2}}=1 }} T[v], \quad \text{ for } T \in \Us^{-1,2}. 
\end{displaymath}

In the following sections, we formulate the problems in variational
forms and then analyze the residual. 

\subsection{Variational Formulation and Residual}
Let $y_\a$ be a solution of the atomistic problem
\eqref{Eq:LocalMinimaA}. If ${y_\a}'(x)>0$ on $[0,1]$, $\Es_\a(y)$ has
the variational derivative at $y_\a$ and therefore, the first order
optimality condition for \eqref{Eq:LocalMinimaA} in variational form is
\begin{equation}
 \label{VariationalA}
  \Es'_\a(y_\a)[v] = \<f,v\>_\eps \qquad \forall v \in \Us,
\end{equation} 
where 
\begin{equation}
 \label{VariationalAStore}
 \Es'_\a(y_\a)[v] = \eps \sum_{b
  \in \Bs} \phi'(r_b D_b y_\a) r_bD_b v.
\end{equation}

Let $y_{\qc}$ be a solution of the QC problem \eqref{LocalMinimumQC}. If ${y_{\qc}}'(x)>0$ on $[0,1]$, then $\Es_{\qc}(y)$ has
the variational derivative at $y_{\qc}$ and the first order
optimality condition for \eqref{LocalMinimumQC} in variational form
is
\begin{equation}
\label{VariationalQC}
\Es'_{\qc}(y_{\qc})[v_h] = \<f,v_h\>_h \quad \forall v_h \in \Us_{\qc},
\end{equation}
where
\begin{align}
\label{VariationalQCStore}
\Es'_{\qc}(y_h)[v_h] &= \sum_{b \in B} \big[
a'_b(y_h)[v_h]+c_b(y_h)[v_h] \big] \nonumber \\
&= |b\cap \Omega_\a| \phi'(r_b D_{b \cap \Omega_\a} y_h) D_{b \cap
  \Omega_\a} v_h  + \sum_{b \in \Bs} \frac{1}{r_b} \int_{b \cap
  \Omega_c} \phi'(\nabla_{r_b} y_h) \nabla_{r_b} v_h \dx \quad \forall
v_h \in \Us_{\qc}.
\end{align}

In conforming finite element analysis, where the finite element
solution space is a subspace of the original solution space, the residual is defined as the
quantity we obtain by inserting the computed solution to the equation
which the real solution satisfies. However, in our case, $\Ys_{\qc}$ is
in general not a subspace of $\Ys_\a$, and hence the functional $\Es_\a(\cdot)$ is not defined on
$\Ys_{\qc}$ in general and $\Es_{\qc}(\cdot)$ is not defined on
$\Ys_\a$ either. The way through which we circumvent this
difficulty is to define mappings between the solution spaces so that the residual could be well defined. In
concrete, we define $J_\Us : \Us \rightarrow
\Us_{\qc}$ and $J_{\Us_{\qc}} : \Us_{\qc} \rightarrow \Us$ such that 
\begin{equation}
\label{definitionJv}
J_{\Us} u = I_h u - \frac{1}{2} \sum_{\ell=1}^{K}(x^h_{k+1} -
x^h_{k-1})  u(x^h_k) \quad \forall u \in \Us,
\end{equation}
and
\begin{equation}
\label{definitionJvh}
J_{\Us_{\qc}} u_h = I_\eps u_h - \eps \sum_{\ell=1}^{N} u_h(\ell \eps)
\quad \forall u_h \in \Us_{\qc}.
\end{equation}
It is easy to check that $J_{\Us} u$ and $J_{\Us_{\qc}} u_h$ satisfy
the corresponding mean zero condition of $\Us$ and $\Us_{\qc}$, which implies that $J_\Us u \in \Us_{\qc}$ and $J_{\Us_{\qc}}
u_h \in \Us$. With a slight abuse of notation, we define
\begin{equation}
\label{definitionJy}
J_{\Us} y = Fx + J_{\Us} u=Fx+I_h u - \frac{1}{2} \sum_{\ell=1}^{K}(x^h_{k+1} -
x^h_{k-1})  u(x^h_k) = I_hy - \frac{1}{2} \sum_{\ell=1}^{K}(x^h_{k+1} -
x^h_{k-1})  u(x^h_k) \quad \forall y \in \Us ,
\end{equation}
and
\begin{equation}
\label{definitionJhy}
J_{\Us_{\qc}} y_h = Fx + J_{\Us_{\qc}} u_h=Fx+I_\eps u_h - \frac{1}{2}
\eps \sum_{\ell=1}^{N} u_h(\ell \eps)=I_\eps y - \frac{1}{2}
\eps \sum_{\ell=1}^{N} u_h(\ell \eps)  \quad \forall y_h \in \Us_{\qc}.
\end{equation}

We then define the residual (at the solution $y_{\qc}$) to be
\begin{align}
\label{definitionTotalResidual}
R[v] &= E'_\a(J_{\Us_{\qc}}y_{\qc})[v] \nonumber\\
       &= E'_\a(J_{\Us_{\qc}}y_{\qc})[v] - E'_{\qc}(y_{\qc})[v] \nonumber\\
       &= \big[\Es'_\a (J_{\Us_{\qc}}y_{\qc})[v] - \<f,v\>_\eps\big]
       -\big[\Es'_{\qc}(y_{\qc})[J_\Us v]-\<f,J_\Us v\>_h\big]
       \nonumber\\
       &=  \big[\Es'_\a (J_{\Us_{\qc}}y_{\qc})[v]-\Es'_{\qc}(y_{\qc})[J_\Us
       v]\big] + \big[\<f,J_\Us v\>_h-\<f,v\>_\eps  \big].
\end{align}
and understand $R$ as a functional in $\Us^{-1,2}$. By this formulation, we essentially split the residual into two parts: the first part
is the residual of the stored energy and the second part is the
residual of the external force. We will bound these two parts in
the following sections.

\subsection{Estimate of the Residual of the Stored Energy}
\label{SectionResidualStoredEnergy}
In this section, we analyze the first part of
\eqref{definitionTotalResidual}, which is the residual of the stored
energy. 

Before we give the theorem, we make several definitions that simplify our notation.

First, we define the set $\Ks_c$ to be
\begin{equation}
\label{Set:K_c}
\Ks_c := \big\{ k: k \in \{1,\ldots,K\} \text{ such that } T_k \cap
[0,1] \neq \emptyset \text{ but } T_k \cap (1,+\infty) = \emptyset \text{ and } T_k \subset \Omega_c \big\},
\end{equation}
which is essentailly the set of indices of the elements in the continuum region in $[0,1]$.

Second, suppose the atomistic region consists of $M$ disjoint subregions
in $[0,1]$, i.e., $\Omega_\a \cap [0,1] = \cup_{i=1}^{M} \Omega^i_\a$ among which
$\Omega^i_\a\cap \Omega^j_\a = \emptyset$ if $i \neq j$, we define the
nodes lie on the atomistic-continuum interface of
the atomistic regions be $x^h_{La_i}$, $i = 1, \ldots, M$ and those lie
on the right interface be $x^h_{Rc_i}$, $i=1, \ldots, M$.

Third, we define $\Ks'_c \subset \Ks_c$ to be the set of indices of the elements in the continuum 
region but not adjacent to an atomistic region, i.e., $\forall k \in \Ks'_c$, $k \ne L_{a_i}$ and $k-1 \ne R_{a_i}$, $\forall i \in \{1,2,\ldots,M\}$.

Using these definitions, we have the following theorem.

\begin{theorem}
\label{Theo:ResidualStoredEnergy}
For $y_h
\in \Ys$ with $y_h'(x) >0$, we have 
\begin{equation}
\label{ResidualStoredEnergy}
\big\|\Es'_\a(J_{\Us_{\qc}} y_h)[\cdot]-\Es'_{\qc}(y_h)[J_\Us\cdot ]\big\|_{\Us^{-1,2}} \le
\big\{ \sum_{k \in \Ks_c} {\eta_k^e}^2 \big\}^{\frac{1}{2}} =: \mathscr{E}_{\rm store}(y_h),
\end{equation}
where 
\begin{equation}
\eta_k^e = \big( \frac{1}{2}\sum_{j = 0}^{2} [[\phi'
]]_{\ell_{k-1}+j}^2 + \frac{1}{2}\sum_{j = 0}^{2} [[\phi'
]]_{\ell_{k}+j}^2 \big)^{\frac{1}{2}},
\end{equation}
if $k \in \Ks'_c$,
\begin{equation}
\eta_k^e = \big( \eps \sum_{j = 0}^2  [[ \phi'
]]_{\ell_{La_i}+j}^2 + \frac{1}{2}\eps \sum_{j=0}^2[[ \phi'
]]_{\ell_{k-1}+j}^2 \big)^{\frac{1}{2}},
\end{equation}
if $k = L_{a_i}$ for some $i \in \{1,2,\ldots,M\}$, i.e., $T_k$ is adjacent to and to the left of an atomistic region, and
\begin{equation}
\eta_k^e = \big( \eps \sum_{j = 0}^2  [[ \phi'
]]_{\ell_{Ra_i}+j}^2 + \frac{1}{2}\eps \sum_{j=0}^2[[ \phi'
]]_{\ell_{k}+j}^2 \big)^{\frac{1}{2}},
\end{equation}
if $k-1 = R_{a_i}$ for some $i \in \{1,2,\ldots,M\}$, i.e., $T_k$ is adjacent to and to the right of an atomistic region. $[[\phi']]_\ell$'s will be defined in the proof.
\end{theorem}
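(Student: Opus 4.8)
The plan is to estimate the residual functional $R_{\rm store}[v] := \Es'_\a(J_{\Us_{\qc}} y_h)[v]-\Es'_{\qc}(y_h)[J_\Us v]$ for an arbitrary test function $v \in \Us$, and then obtain the dual norm bound by Cauchy--Schwarz together with the identity \eqref{Eq:FirstDerivativeL2Identity}. First I would write both terms using the bond decomposition: $\Es'_\a(J_{\Us_{\qc}} y_h)[v] = \eps\sum_{b\in\Bs}\phi'(r_b D_b(J_{\Us_{\qc}} y_h))\,r_b D_b v$ from \eqref{VariationalAStore}, and similarly expand $\Es'_{\qc}(y_h)[J_\Us v]$ using \eqref{VariationalQCStore}, keeping the atomistic-bond part and the continuum-integral part separate. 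The key observation, to be verified by direct computation, is that because $J_{\Us_{\qc}} y_h = I_\eps y_h$ up to an additive constant, the discrete difference quotients $D_b(J_{\Us_{\qc}} y_h)$ over bonds $b$ lying in $\Omega_\a$ coincide with the corresponding quantities $D_{b\cap\Omega_\a} y_h$ appearing in $\Es'_{\qc}$, since $y_h$ is piecewise linear and its nodes in $\Omega_\a$ are atomistic nodes; likewise $r_b D_b v$ on the atomistic bonds matches $D_{b\cap\Omega_\a}(J_\Us v)$ up to the nodal-interpolation identity. Hence the atomistic-region contributions cancel, and $R_{\rm store}[v]$ reduces to a sum over bonds that meet $\Omega_c$.

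Next I would localize this remaining sum to the continuum elements $T_k$, $k\in\Ks_c$. On each such element the continuum term contributes an integral of $\phi'(\nabla_{r_b} y_h)\nabla_{r_b}(J_\Us v)$, while the atomistic term contributes $\eps\sum$ over atomistic bonds whose intersection with that element is nonempty; since $y_h$ is affine on $T_k$ and $J_\Us v$ is affine on $T_k$, both $\nabla_{r_b} y_h$ and $(J_\Us v)'$ are constant on $T_k$. The difference between the discrete sum and the integral is then a quadrature-type error, and after summation by parts in the bond index it collapses to boundary terms at the element endpoints $x^h_{k-1}, x^h_k$ — this is where the jump quantities $[[\phi']]_\ell$ (the differences of $\phi'$ evaluated on neighbouring bonds straddling atom $\ell$) appear, weighted by the values of $v$ (not $v'$) at the nodes near $x^h_{k-1}$ and $x^h_k$, with the factor $\theta_k$ from \eqref{Def:lkandthetak} governing how a bond is split across the element boundary. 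The indices $j=0,1,2$ account for the nearest- and next-nearest-neighbour bonds that can straddle a given cut point. For elements adjacent to an atomistic region one endpoint sits on the atomistic--continuum interface, so only the "outer" endpoint produces continuum-type jump terms while the interface endpoint produces the $\eps$-weighted atomistic-type terms, explaining the asymmetric formulas for $k=L_{a_i}$ and $k-1=R_{a_i}$.

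Having written $R_{\rm store}[v] = \sum_{k\in\Ks_c}\sum_{\text{nodes near }\partial T_k}(\text{jump of }\phi')\cdot v(\text{node})$, I would bound $|v(\text{node})|$ in terms of $\|v\|_{\Us^{1,2}} = \|v'\|_{L^2[0,1]}$. Since $v$ has zero mean and is $1$-periodic, a Poincaré-type inequality gives $\|v\|_{L^\infty[0,1]} \le C\|v'\|_{L^2[0,1]}$; more sharply, to get the clean constant-free bound with the element-length weights hidden inside $\eta_k^e$ one uses that the nodal values at the two ends of $T_k$ can be expressed via $v'$ integrated against suitable piecewise-linear weights, so the contribution of element $k$ is controlled by $\eta_k^e$ times $\|v'\|_{L^2(T_k)}$ (or a local neighbourhood). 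Summing over $k$ and applying discrete Cauchy--Schwarz yields $|R_{\rm store}[v]| \le \{\sum_{k\in\Ks_c}{\eta_k^e}^2\}^{1/2}\|v'\|_{L^2[0,1]}$, which is the claimed estimate after taking the supremum over $\|v\|_{\Us^{1,2}}=1$.

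The main obstacle I anticipate is the bookkeeping in the second step: carefully tracking which nearest- and next-nearest-neighbour bonds cross each element boundary $x^h_k$ — a next-nearest bond has length $2\eps$ and the element endpoint need not be an atomistic site (the parameter $\theta_k$), so a single bond may be split between $T_k$ and $T_{k+1}$ and also be partly in $\Omega_\a$ when $T_k$ abuts the atomistic region. Getting the summation-by-parts exactly right so that it produces precisely the three jump terms $[[\phi']]_{\ell_k+j}$, $j=0,1,2$, with the stated $\tfrac12$ and $\eps$ weights — and checking that the interface cases glue consistently with the pure-continuum case — is the delicate part; the functional-analytic estimate in the third step is comparatively routine.
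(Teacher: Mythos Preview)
Your overall architecture is right --- reduce to a bond sum, show most bonds contribute zero, and Cauchy--Schwarz the remainder --- but two points in the middle are off and would derail the argument as written.

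First, you skip over the test-function mismatch. The atomistic term is tested against $v$, while the QC term is tested against $J_\Us v$, i.e.\ $I_h v$ up to a constant. You cannot simply say the atomistic-region contributions ``cancel'' until both sides see the same test function. The paper's key preliminary step is to add and subtract $\Es'_{\qc}(y_h)[v]$ and then show that $\Es'_{\qc}(y_h)[I_h v - v] = 0$: for the atomistic-bond part this follows because the endpoints of $b\cap\Omega_\a$ are always nodes of $\Ts^h$, and for the continuum-integral part it follows from the bond-density lemma and the fact that $I_h v$ and $v$ agree at element endpoints. Only after this reduction do the bond-by-bond cancellations you describe actually hold.

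Second, and more seriously, your assertion that the residual ``collapses to boundary terms \ldots\ weighted by the values of $v$ (not $v'$)'' is wrong, and so is the proposed Poincar\'e step. No summation by parts and no quadrature-error expansion is needed: for a bond $b$ lying entirely inside a single continuum element $T_k$ the atomistic and QC contributions coincide \emph{exactly} (since $y_h$ is affine on $T_k$, so $\nabla_{r_b} y_h = r_b D_b y_h$), leaving only the finitely many bonds that straddle an element boundary or an interface. For each such bond the residual contribution is already a linear combination of discrete derivatives $v'_{\ell_k}, v'_{\ell_k+1}, v'_{\ell_k+2}$ --- for instance $\eps\phi'(r_b D_b y_h)\,r_b D_b v$ with $r_b D_b v = v'_{\ell_k+1}$ or $v'_{\ell_k}+v'_{\ell_k+1}$ --- so the whole residual is $\sum_\ell \eps\,[[\phi']]_\ell\, v'_\ell$ with the stated jump coefficients. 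A single Cauchy--Schwarz against $\|\vb'\|_{\ell^2_\eps} = \|v'\|_{L^2[0,1]}$ then gives the estimate with no extraneous constant. Your Poincar\'e route would introduce a global constant and could not recover the clean element-localized bound.
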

\begin{proof}
By \eqref{VariationalAStore} and \eqref{VariationalQCStore}, we have
\begin{align}
\label{ResidualStore1}
\Es'_\a(J_{\Us_{\qc}} y_h)[v]-\Es'_{\qc}(y_h)[J_\Us v]
=&\eps \sum_{b
  \in \Bs} \phi'(r_b D_b J_{\Us_{\qc}} y_h) r_bD_b v-\sum_{b \in \Bs}|b\cap \Omega_\a| \phi'(r_b D_{b \cap \Omega_\a} y_h) D_{b \cap
  \Omega_\a} J_\Us v  \nonumber\\
 &-\sum_{b \in \Bs} \frac{1}{r_b} \int_{b \cap
  \Omega_c} \phi'(\nabla_{r_b} y_h) \nabla_{r_b} J_\Us v \dx\nonumber\\
=&\eps \sum_{b \in \Bs} \phi'(r_b D_b y_h) r_bD_b v -\sum_{b \in \Bs}|b\cap \Omega_\a| \phi'(r_b D_{b \cap \Omega_\a} y_h) D_{b \cap
  \Omega_\a} I_hv  \nonumber\\
  &- \sum_{b \in \Bs} \frac{1}{r_b} \int_{b \cap
  \Omega_c} \phi'(\nabla_{r_b} y_h) \nabla_{r_b} I_hv \dx,
\end{align}
since $D_b J_{\Us_{\qc}} y_h = D_b I_\eps y_h = D_b y_h$, $D_\omega J_{\Us} v =
D_\omega I_h v$ for any $\omega$ being an open interval, and $(J_\Us v)' = (I_hv)'$, which can be easily verified by
noting that $J_{\Us_{\qc}} y_h$ and $J_\Us v$ are $I_\eps y_h$ and
$I_h v$ shifted by some constants.

To make further analysis of \eqref{ResidualStore1}, we subtract and
add the same terms 
\begin{displaymath}
\sum_{b \in \Bs}|b\cap \Omega_\a| \phi'(r_b D_{b \cap \Omega_\a} y_h)  D_{b \cap
  \Omega_\a}v \text{ and } \sum_{b \in \Bs} \frac{1}{r_b} \int_{b \cap
  \Omega_c} \phi'(\nabla_{r_b} y_h) \nabla_{r_b} v \dx
\end{displaymath}
to get
\begin{align}
\label{ResidualStore2}
\Es'_\a(J_{\Us_{\qc}} y_h)[v]-\Es'_{\qc}(y_h)[J_\Us v]
 = &\sum_{b
  \in \Bs} \bigg\{   \eps \phi'(r_b D_b  y_h) r_bD_b v -|b\cap \Omega_\a| \phi'(r_b D_{b \cap \Omega_\a} y_h)  D_{b \cap
  \Omega_\a} v \nonumber\\
   & -\frac{1}{r_b} \int_{b \cap
  \Omega_c} \phi'(\nabla_{r_b} y_h) \nabla_{r_b} v \dx \bigg\} \nonumber\\
   &-\bigg\{ \sum_{b \in \Bs}
|b\cap \Omega_\a| \phi'(r_b D_{b \cap \Omega_\a} y_h) \big[ D_{b \cap
  \Omega_\a} I_hv \dx - D_{b \cap \Omega_\a} v \big] \bigg\}\nonumber\\
   &-\bigg\{\sum_{b \in \Bs} \frac{1}{r_b} \int_{b \cap
  \Omega_c} \phi'(\nabla_{r_b} y_h) \big[\nabla_{r_b} I_hv -
\nabla_{r_b} v \big] \dx \bigg\}.
\end{align}

We first analyze the second and third groups, which turn out to be $0$
as we will see immediately.

For the second group, we have,
\begin{align}
\label{ResidualStore2-2}
&\quad \sum_{b \in \Bs}
|b\cap \Omega_\a| \phi'(r_b D_{b \cap \Omega_\a} y_h) \big[ D_{b \cap
  \Omega_\a} I_hv \dx - D_{b \cap \Omega_\a} v \big] \nonumber \\
&= \sum_{b \in \Bs}
|b\cap \Omega_\a| \phi'(r_b D_{b \cap \Omega_\a} y_h) \bigg[
\frac{I_hv(R_{b \cap \Omega_\a}) - I_hv(L_{b \cap
    \Omega_\a})}{R_{b \cap \Omega_\a}-L_{b \cap \Omega_\a}} - \frac{v(R_{b \cap \Omega_\a}) - v(L_{b \cap
    \Omega_\a})}{R_{b \cap \Omega_\a}-L_{b \cap \Omega_\a}} \bigg].
\end{align}
We define the above to be $0$ if $b \cap \Omega_\a = \emptyset$. If $b \cap
\Omega_\a \neq \emptyset$, since both $R_{b \cap
  \Omega_\a}$ and $L_{b \cap \Omega_c}$ are either at atomistic postions
in $\Omega_\a$ or on $\partial \Omega_c$, they must be nodes
in $\Ts^h$. Therefore, by the definition of $I_hv$, the following holds 
\begin{displaymath}
I_hv(L_{b \cap
    \Omega_\a}) = v(L_{b \cap \Omega_\a}) \text{ and } I_hv(R_{b \cap \Omega_\a}) = v(R_{b \cap \Omega_\a}),
\end{displaymath}
which implies that \eqref{ResidualStore2-2} is $0$.

For the third group, upon defining $\chi_{\mathcal{S}}$ to be the characteristic function of a set
$\mathcal{S}$, we can rewrite it as
\begin{align}
\label{ResidualStore2-3-1}
& \quad \sum_{b \in \Bs} \frac{1}{r_b} \int_{b \cap
  \Omega_c} \phi'(\nabla_{r_b} y_h) \big[\nabla_{r_b} I_hv -
\nabla_{r_b} v \big] \dx \nonumber\\ 
&= \sum_{b \in \Bs} \frac{1}{r_b} \int_{\Omega_c} \chi_b \phi'(\nabla_{r_b} y_h) \big[\nabla_{r_b} I_hv -
\nabla_{r_b} v \big] \dx \nonumber\\
&=\sum_{b \in \Bs} \frac{1}{r_b} \sum_{k\in \Ks_c}\int_{T_k} \chi_b \phi'(\nabla_{r_b} y_h) \big[\nabla_{r_b} I_hv -
\nabla_{r_b} v \big] \dx \nonumber\\
&= \sum_{r = 1}^2 \sum_{b \in \Bs, r_b=r} \sum_{k \in \Ks_c} \frac{1}{r_b} \int_{T_k} \chi_b \phi'(\nabla_{r_b} y_h) \big[\nabla_{r_b} I_hv -
\nabla_{r_b} v \big] \dx \nonumber\\
&=\sum_{r = 1}^2 \sum_{k \in \Ks_c}\sum_{b \in \Bs, r_b=r} \frac{1}{r_b} \int_{T_k} \chi_b \phi'(\nabla_{r} y_h) \big[\nabla_{r} I_hv -
\nabla_{r} v \big] \dx \nonumber\\
&= \sum_{r = 1}^2 \sum_{k \in \Ks_c} \phi'(\nabla_{r} y_h |_{T_k}) \int_{T_k} \big[ \sum_{b \in \Bs, r_b=r} \frac{1}{r_b} \chi_b \big] \big[\nabla_{r} I_hv -
\nabla_{r} v \big],
\end{align}
since $\nabla_{r} y_h |_{T_k}$ is a constant on each element. By the 1D bond density lemma\cite[Lemma 3.4]{Shapeev2010a}, 
\begin{displaymath}
\sum_{b \in \Bs, r_b=r} \frac{1}{r}\chi_b(x) =_{a.e.}1,
\end{displaymath}
we have
\begin{align}
&\sum_{r = 1}^2 \sum_{k \in \Ks_c} \phi'(\nabla_{r} y_h |_{T_k}) \int_{T_k} \big[ \sum_{b \in \Bs, r_b=r} \frac{1}{r_b} \chi_b \big] \big[\nabla_{r} I_hv -
\nabla_{r} v \big] \nonumber \\
=&\sum_{r = 1}^2 \sum_{k \in \Ks_c} \phi'(\nabla_{r_b} y_h|_{T_k})\bigg[ r
\big(I_hv(x_k) -I_hv(x_{k-1})\big) - r \big(v(x_k) -v(x_{k-1})\big)
\bigg].
\label{ResidualStore2-3-2}
\end{align}
Again by the definition of $I_hv$, 
\begin{displaymath}
I_hv(x^h_k) = v(x^h_k) \text{ and } I_hv(x^h_{k-1})=v(x^h_{k-1}),
\end{displaymath}
and thus \eqref{ResidualStore2-3-2} is $0$.

Now we turn to the analysis of the first group and analyze 
\begin{equation}
\eps \phi'(r_b D_b  y_h) r_bD_b v -|b\cap \Omega_\a| \phi'(r_b D_{b \cap \Omega_\a} y_h)  D_{b \cap
  \Omega_\a} v 
    -\frac{1}{r_b} \int_{b \cap
  \Omega_c} \phi'(\nabla_{r_b} y_h) \nabla_{r_b} v \dx
\label{ResidualStore2-1}
\end{equation}
for each interaction bond $b$.

If $b \subset \Omega_\a$, we have $|b \cap \Omega_c| = r_b \eps$, $|b \cap \Omega_\a| = 0$ 
and the equivalence of the operators $D_b = D_{b \cap \Omega_\a}$. We know that \eqref{ResidualStore2-1} is $0$ by substituting these equivalences.

If $b \subset \Omega_c \cap
T_k$ for some $k \in \Ks_c$, then $|b \cap \Omega_c| = r_b \eps$ and $|b \cap \Omega_\a| = 0$. We also note that $\nabla_{r_b} y_h (x)= r_bD_by_h$, 
as $y_h$ is affine on $T_k$, and $\frac{1}{r_b}\int_{b \cap \Omega_c} \nabla_{r_b} v = \eps r_bD_bv$. Using these equivalences, we know that \eqref{ResidualStore2-1} is again $0$.

Therefore, we only need to analyze the bonds crossing the atomistic-continuum interface 
or the boundaries of two adjacent elements in $\Omega_c$. Because of its tediousness, we leave the detailed analysis to the Appendix but just present the result here. 
Employing the notation often adopted by a posteriori error analysis for elliptic equations, we have the
following result
\begin{align}
\label{ResiStorLastPar}
&\quad \eps \sum_{b
  \in \Bs} \phi'(r_b D_b I_\eps y_h) r_bD_b v -\sum_{b \in \Bs}
|b\cap \Omega_\a| \phi'(r_b D_{b \cap \Omega_\a} y_h)  D_{b \cap
  \Omega_\a} v + \sum_{b \in \Bs} \frac{1}{r_b} \int_{b \cap
  \Omega_c} \phi'(\nabla_{r_b} y_h) \nabla_{r_b} v \dx  \nonumber \\
&=\sum_{i=1}^M \eps \bigg\{ [[\phi']]_{\ell_{La_i}} v'_{\ell_{La_i}}
  +[[ \phi' ]]_{\ell_{La_i}+1} v'_{\ell_{La_i}+1} +[[ \phi' ]]_{\ell_{La_i}+2} v'_{\ell_{La_i}+2} \bigg\}
  \nonumber\\
&\quad+\sum_{i=1}^M \eps \bigg\{ [[\phi']]_{\ell_{Ra_i}} v'_{\ell_{Ra_i}}
  +[[ \phi' ]]_{\ell_{Ra_i}+1} v'_{\ell_{Ra_i}+1} +[[ \phi' ]]_{\ell_{Ra_i}+2} v'_{\ell_{Ra_i}+2} \bigg\} \nonumber\\
&\quad+\sum_{k\in \Ks'_c} \eps  \bigg\{ [[\phi']]_{\ell_{k}} v'_{\ell_{k}}
  +[[ \phi' ]]_{\ell_{k}+1} v'_{\ell_{k}+1} +[[ \phi' ]]_{\ell_{k}+2} v'_{\ell_{k}+2} \bigg\},
\end{align}
where for $k =La_i$,
\begin{align}
[[\phi']]_\ell &= \phi'\big(
(1-\theta_k) y_h'|_{T_{k+1}} 
+(1+\theta_k) y_h'|_{T_k} \big) - \phi'\big(2y_h'|_{T_k}\big),
\end{align}
\begin{align}
[[\phi']]_{\ell+1} &= \bigg[\phi'\big( (1-\theta_k) y_h'|_{T_{k+1}} +
\theta_k y_h'|_{T_k} \big) - (1-\theta_k)\phi'\big(y_h'|_{T_{k+1}}\big) -
\theta_k \phi'\big(y_h'|_{T_k}\big) \bigg]  \nonumber\\
&\quad+\bigg[ (1-\theta_k)
\phi'\big(\frac{2}{2-\theta_k}y_h'|_{T_{k+2}} + \frac{2(1-\theta_k)}{2-\theta_k}y_h'|_{T_k+1} \big) \nonumber\\
&\quad \quad +\theta_k\phi'\big(2y_h'|_{T_k}\big)  -\phi'\big(
y_h'|_{T_{k+2}} + (1-\theta_k)y_h'|_{T_{k+1}}+\theta_k y_h'|_{T_{k}}\big) \bigg] \nonumber\\
&\quad+\bigg[ (1-\theta_k) \phi'\big(2y_h'|_{T_{k+1}}\big) + \theta_k \phi'\big(2y_h'|_{T_k}\big) -\phi'\big((1-\theta_k)y_h'|_{T_{k+1}} +(1+\theta_k) y_h'|_{T_k}\big) \bigg]
\end{align}
\begin{align}
[[\phi']]_{\ell+2} &=
\phi'(\frac{2(1-\theta_k)}{2-\theta_k}y_h'|_{T_{k+1}}+\frac{2}{2-\theta_k}
y_h'|_{T_{k+2}}\big)
-\phi'\big(y_h'|_{T_{k+2}} + (1-\theta_k) y_h'|_{T_{k+1}}
+\theta_ky_h'|_{T_k}\big),
\end{align}
for $k =Ra_i$
\begin{align}
[[\phi']]_{\ell} &= \phi'\big((1-\theta_k) y_h'|_{T_{k+1}} + \theta_k y_h'|_{T_k} +y_h'|_{T_{k-1}}\big) - 
\phi'\bigg(\frac{2\theta_k}{(1+\theta_k)}y_h'|_{T_k}+\frac{2}{1+\theta_k}
y_h'|_{T_{k-1}}\bigg), 
\end{align}
\begin{align}
[[\phi']]_{\ell+1} &= \bigg[\theta_k\phi'\big(y_h'|_{T_k}\big) +
(1-\theta_k)\phi'\big(y_h'|_{T_{k+1}}\big) -\phi'\big((1-\theta_k) y_h'|_{T_{k+1}} + \theta_k y_h'|_{T_k}  \big) \bigg]  \nonumber\\
&\quad+\bigg[ \theta_k \phi'\big(\frac{2\theta_k}{1+\theta_k} y_h'|_{T_k}
+\frac{2}{1+\theta_k}y_h'|_{T_{k-1}}\big) +
(1-\theta_k)\phi'\big(2y_h'|_{T_{k+1}}\big)\nonumber\\
& \quad-\phi'\big((1-\theta_k) y_h'|_{T_{k+1}}+\theta_k y_h'|_{T_k} +y_h'|_{T_{k-1}}\big) \bigg] \nonumber\\
&\quad +\bigg[ (1-\theta_k)
\phi'\big(2y_h'|_{T_{k+1}}\big)+\theta_k\phi'\big(2y_h'|_{T_k}\big) -\phi'\big((2-\theta_k) y_h'|_{T_{k+1}}+\theta_ky_h'|_{T_k} \big) \bigg]
\end{align}
\begin{align}
[[\phi']]_{\ell+2} &= \phi'\big(2y_h'|_{T_{k+1}} \big) - \phi'\big((2-\theta_k) y_h'|_{T_{k+1}}+\theta_k y_h'|_{T_k} \big),
\end{align}
and for $k \in \Ks'_c$
\begin{align}
[[\phi']]_{\ell} &= \phi'\big(2y_h'|_{T_k}\big) -
\phi'\big( (1-\theta_k) y_h'|_{T_{k+1}} +
(1+\theta_k) y'_h|_{T_k}\big),
\end{align}
\begin{align}
[[\phi']]_{\ell+1} &= \bigg[(1-\theta_k)\phi'\big(y'_h|_{T_{k+1}}\big)+\theta_k \phi'\big(y'_h|_{T_k}\big)-\phi'\big((1-\theta_k) y_h'|_{T_{k+1}}+\theta_k y_h'|_{T_k}\big)\bigg] \nonumber\\
\quad&+\bigg[2(1-\theta_k)\phi'\big(2y_h'|_{T_{k+1}}\big)+2\theta_k\phi'\big(2y_h'|_{T_k}\big)
\nonumber\\
\quad&-\phi'\big( (2-\theta_k) y_h'|_{T_{k+1}}+\theta_k y_h'|_{T_k} \big)-\phi'\big( (1-\theta_k) y_h'|_{T_{k+1}}+(1+\theta_k) y_h'|_{T_k}\big)
\bigg],
\end{align}
\begin{align}
[[\phi']]_{\ell+2}&=\phi'\big(
2y_h'|_{T_{k+1}}\big)-\phi'\big((2-\theta_k)y_h'|_{T_{k+1}}+\theta_k y_h'|_{T_k}\big).
\end{align}
Distributing the contribution of \eqref{ResiStorLastPar} to each element
and applying Cauchy-Schwarz inequality, we obtain the estimate stated in the theorem.
\end{proof}

\subsection{Estimate of the Residual of the External Force}
\label{SectionResidualExternalForce}
We now turn to the estimate of the residual of the
external energy. Upon defining $J_\Us v := v_h$, the residual of the
external force is given by 
\begin{equation}
\<f, v_h\>_h - \<f,v\>_\eps,
\label{Eq:ResGraExEnOrig}
\end{equation}
where $f, v \in \Us$. 

To further analyze \eqref{Eq:ResGraExEnOrig}, we introduce a new
partition $\Ts^r = \{T^r_j\}_{j=-\infty}^{+\infty}$ of the domain
$\R$, such that all the nodes in partition $\Ts^\eps$ and partition
$\Ts^h$ are included in this partition. The indexing of the nodes in
$\Ts^r$ follow the rule that the node $x^h_k$ in $\Ts^h$ is labeled as $x_{j_k}^r$ in
  $\Ts^r$. We also assume there are $n$ nodes in $\Ts^r$ in $[0,1]$, i.e.,  
\begin{displaymath}
n = \big|\{\eps,2\eps,\ldots,N \eps\} \cup \{ x_1,x_2,\ldots,x_K \}\big|,
\end{displaymath} 
where $|\As|$ denote the cardinality of a finite set $\As$.

The inner product associated with $\Ts^r$ partition is then defined by
\begin{equation}
\label{rInnerproduct}
\<f,g\>_r :=  \int_0^1 I_r (f g) \dx =\sum_{j = 1}^n \frac{1}{2} (x^r_{j+1}-x^r_{j-1})
f^r_jg^r_j = : \<\fb^r, \gb^r\>_r \quad \forall f,g \in C^0(\R),
\end{equation}
where $I_r$ is the linear nodal interpolation operator with respect to
$\Ts^r$, and $\fb^r$ and $\gb^r$ are the vectorizations of $f$ and $g$
with respect to $\Ts^r$.

Now we decompose the residual of the external force into three parts by adding and
subtracting the same terms, 
\begin{align}
\<f,v_h\>_h-\<f,v\>_\eps   = \big[\<f,v\>_r- \<f,v\>_\eps\big]
                                              +\big[\<f,
                                             v_h\>_r-\<f,v\>_r\big] 
                                               +\big[ \<f,v_h\>_h -\<f,v_h\>_r \big]
\label{Eq:ResGraExEnSplit}.
\end{align}

The following three lemma are derived to give the estimates of the three
parts.

\begin{lemma}
\label{Lemma:ResGraExEn-1}
Let $\fb, \vb, \fb^r, \vb^r$ be the vectorizations of $f, v \in C^0(\R)$ according
to $\Ts^\eps$ and $\Ts^r$. Then the following inequality holds
\begin{equation}
\big|\<f,v\>_r- \<f,v\>_\eps\big| = \big|\<\fb^r,
\vb^r\>_r-\<\fb,\vb\>_\eps \big| \le  \frac{1}{8} \eps^2 \|\fb'\|_{\ell_\eps^2 (\mathcal{K}_U)} \|\vb'\|_{\ell_\eps^2},
\end{equation}
where $\mathcal{K}_U = \big\{k \in\{ 1, \ldots, K\}: x_k \neq \ell_k\eps
\big\}$, in other words, $\Ks_U$ is the set of indices of the nodes
$x^h_k$ in $\Ts^h$ such that $x^h_k$ does not coincide with any of the nodes in $\Ts^\eps$.
\end{lemma}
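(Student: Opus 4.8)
The plan is to express the difference $\<f,v\>_r - \<f,v\>_\eps$ as a sum over those subintervals where the two quadrature rules disagree, and then bound the local quadrature error on each such interval. Both $\<f,v\>_r$ and $\<f,v\>_\eps$ are integrals of piecewise-linear interpolants of $fv$: the first uses the nodes of $\Ts^r$, the second the nodes of $\Ts^\eps$. Since $\Ts^r$ refines $\Ts^\eps$ (all nodes of $\Ts^\eps$ lie in $\Ts^r$), on any element $T^\eps_\ell$ whose interior contains no node of $\Ts^h$ the two interpolants of $fv$ coincide, so the contributions cancel. The only elements of $\Ts^\eps$ that contain an extra $\Ts^r$-node in their interior are precisely those $T^\eps_\ell$ with $\ell\eps < x^h_k$ for exactly one $k$ with $x^h_k \ne \ell_k\eps$, i.e.\ those indexed by $\Ks_U$; this is where the set $\Ks_U$ enters.

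First I would write $\<f,v\>_r - \<f,v\>_\eps = \int_0^1 \bigl(I_r(fv) - I_\eps(fv)\bigr)\dx$ and localize to the offending intervals, so that the whole difference reduces to a sum over $k \in \Ks_U$ of $\int_{T^\eps_{\ell_k+1}}\bigl(I_r(fv) - I_\eps(fv)\bigr)\dx$, where the single interior node sits at $x^h_k = (\ell_k+\theta_k)\eps$. On such an interval $I_\eps(fv)$ is the single affine piece interpolating $fv$ at the two endpoints $\ell_k\eps$ and $(\ell_k+1)\eps$, while $I_r(fv)$ is the two-piece broken-linear interpolant of $fv$ at the three points $\ell_k\eps$, $x^h_k$, $(\ell_k+1)\eps$. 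Next I would observe that since $f,v$ are both in $\Us \subset \Ps_1(\Ts^\eps)\cap C^0$, the product $fv$ is a quadratic polynomial on $T^\eps_{\ell_k+1}$, so this is a standard one-dimensional quadrature-error computation: the difference of the two interpolants is a correction supported on the element whose integral can be written explicitly in terms of the second derivative $(fv)''$ on the element — which is constant and equals $2 f'_{\ell_k+1}v'_{\ell_k+1}$ in the discrete-derivative notation, up to the standard factor. Carrying out the elementary computation of $\int$ over a segment of length $\eps$ of (linear interpolant at endpoints) minus (two-piece interpolant through an interior node at relative position $\theta$) gives a factor of the form $-\tfrac{1}{2}\theta(1-\theta)\eps^3 \cdot \tfrac12(fv)''$, whose absolute value is bounded, since $\theta(1-\theta)\le \tfrac14$, by $\tfrac{1}{16}\eps^3 |(fv)''|$.

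Then I would sum over $k \in \Ks_U$, writing $|(fv)''|$ on $T^\eps_{\ell_k+1}$ as $2|f'_{\ell_k+1}||v'_{\ell_k+1}|$, to obtain the bound $\tfrac{1}{8}\eps^3 \sum_{k\in\Ks_U} |f'_{\ell_k+1}||v'_{\ell_k+1}|$; rewriting $\eps^3 = \eps^2\cdot\eps$ and distributing one factor $\eps$ to each discrete derivative, this is $\tfrac{1}{8}\eps^2 \sum_{k\in\Ks_U} (\sqrt\eps|f'_{\ell_k+1}|)(\sqrt\eps|v'_{\ell_k+1}|)$, to which the Cauchy--Schwarz inequality in $\ell^2_\epsb$ applies directly. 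One has to be slightly careful that the indices $\ell_k+1$ ranging over $k\in\Ks_U$ are distinct (so that $\sum_{k\in\Ks_U}\eps|v'_{\ell_k+1}|^2 \le \|\vb'\|_{\ell^2_\eps}^2$, the full norm) — this follows because distinct $\Ts^h$-nodes that are not $\Ts^\eps$-nodes lie in distinct $\Ts^\eps$-elements, given the mesh assumption $\eps^h_k\ge 2\eps$ in $\Omega_c$ and that atomistic-region nodes coincide with $\Ts^\eps$-nodes. The main obstacle is purely bookkeeping rather than conceptual: correctly identifying which $\Ts^\eps$-elements carry an interior $\Ts^r$-node, pinning down that each carries exactly one, and tracking the relative position $\theta_k$ through the local quadrature-error formula so that the constant $\tfrac18$ comes out sharp; the analytic content is just the elementary error estimate for the trapezoidal-type rule applied to a quadratic on a single element.
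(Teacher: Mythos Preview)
Your proposal is correct and follows essentially the same approach as the paper: both localize the difference to the $\Ts^\eps$-elements $T^\eps_{\ell_k+1}$ carrying an interior $\Ts^h$-node, compute the local contribution explicitly to obtain the term $\tfrac{1}{2}\theta_k(1-\theta_k)\eps^3 f'_{\ell_k+1}v'_{\ell_k+1}$, bound $\theta_k(1-\theta_k)\le\tfrac14$, and finish with Cauchy--Schwarz. The only cosmetic difference is that you package the local computation as an interpolation/quadrature error for the quadratic $fv$, whereas the paper expands the trapezoidal sums and does the algebra directly with the linearly interpolated values $f^r_{j_k}=(1-\theta_k)f_{\ell_k}+\theta_k f_{\ell_k+1}$; the two calculations are line-for-line equivalent.
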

\begin{proof}
We first write out the two inner products and eliminate the terms that
are the same
\begin{align}
\<\fb^r, \vb^r\>_r-\<\fb,\vb\>_\eps
&= \sum_{\ell = 1}^n \eps^r_\ell \frac{1}{2}
(f^r_\ell v^r_\ell + f^r_{\ell+1} v^r_{\ell+1}) -\sum_{\ell = 1}^N \eps \frac{1}{2}(f_\ell v_\ell+f_{\ell+1}
v_{\ell+1}) \nonumber \\
&= \sum_{k \in \Ks_U}\big( \eps_{j_k}\frac{1}{2} ( f^r_{j_k-1} v^r_{j_k-1} + f^r_{j_k}
v^r_{j_{k}}) +  \eps_{j_k+1}\frac{1}{2} ( f^r_{j_{k}} v^r_{j_{k}} + f^r_{j_{k}+1}
v^r_{j_{k}+1}) \big)\nonumber\\
&\quad -\sum_{k\in \Ks_U} \eps \big( f_{\ell_k} v_{\ell_k} + f_{\ell_{k}+1}
v_{\ell_{k}+1}\big) ,
\label{Eq:ResGraExEnSplit-1-1}
\end{align}
as $ \eps_{j_k+2} = \eps_{j_k+3} = \ldots =
\eps_{j_{k+1}-1} = \eps$ and $ f_{\ell_k+i} v_{\ell_k+i}  = f^r_{j_{k}+i}
v^r_{j_{k}+i}, \quad i = 1, 2,\ldots, \ell_{k+1} -\ell_{k}$, if
$\ell_k\eps \neq x_k$ and $\ell_{k+1}\eps \neq x_{k+1}$.

For $k$ such that $\ell_k\eps \neq x_k$, by the definition of $\fb$,
$\vb$, $\fb^r$ and $\vb^r$, we have $f_{\ell_k} =f^r_{j_k-1}$,
$v_{\ell_k} =v^r_{j_k-1}$, $f_{\ell_k+1} =f^r_{j_k+1}$ and
$v_{\ell_k+1} =v^r_{j_k+1}$.  We also have $f^r_{j_{k}} =   (1-\theta_k) f_{\ell_k} +\theta_k
f_{\ell_{k}+1}$ and $v^r_{j_{k}} =   (1-\theta_k) v_{\ell_k} +\theta_k
v_{\ell_{k}+1}$. Inserting these equalities, \eqref{Eq:ResGraExEnSplit-1-1} can
be estimated as

\begin{align}
\label{ErrExEn4-1-2}
&\big|\<\fb^r,\vb^r\>_r - \<\fb,\vb\>_\eps \big|\nonumber\\
=&\bigg| \sum_{k \in \Ks_U}
\bigg\{\frac{1}{2} \theta_k\eps  f_{\ell_k}v_{\ell_k} + \frac{1}{2}
\theta_k\eps \big[  (1-\theta_k) f_{\ell_k} +\theta_k
f_{\ell_{k+1}}\big] \big[ (1-\theta_k) v_{\ell_k} +\theta_k
v_{\ell_{k}+1}\big] \nonumber\\
&+ \frac{1}{2} (1-\theta_k)\eps  f_{\ell_k+1}v_{\ell_k+1} + \frac{1}{2}
(1-\theta_k)\eps \big[  (1-\theta_k) f_{\ell_k} +\theta_k
f_{\ell_{k}+1}\big] \big[ (1-\theta_k) v_{\ell_k} +\theta_k
v_{\ell_{k}+1}\big] \nonumber\\
&-\frac{1}{2} \eps  f_{\ell_k}v_{\ell_k} -\frac{1}{2} \eps
f_{\ell_k+1}v_{\ell_k+1} \bigg\} \bigg|\nonumber\\
= &\big|\sum_{k \in \Ks_U} \frac{1}{2} \eps \bigg\{ \big[ \theta_k(\theta_k-1)
(f_{\ell_k+1}- f_{\ell_k}) v_{\ell_k+1}\big] -\big[ \theta_k(\theta_k-1)
(f_{\ell_k+1}- f_{\ell_k}) v_{\ell_k}\big] \bigg\} \big|\nonumber\\
= &\sum_{k \in \Ks_U}\frac{1}{2} \eps^3 \big|\theta_k (1-\theta_k)  f'_{\ell_k+1}
v'_{\ell_k+1} \big|\nonumber \\
&\le \frac{1}{8} \eps^2 \bigg( \sum_{k \in \Ks_U} \eps
|f'_{\ell_k+1}|^2\bigg)^{\frac{1}{2}} \bigg( \sum_{k \in \Ks_U} \eps
|v'_{\ell_k+1}|^2\bigg)^{\frac{1}{2}}  \le \frac{1}{8} \eps^2 \|\fb'\|_{\ell_\eps^2 (\mathcal{K}_U)} \|\vb'\|_{\ell_\eps^2},
\end{align}
which concludes the proof.
\end{proof}

\begin{remark}
If $\Ks = \emptyset$, i.e., every node in $T^h$ is also in $T^\eps$, then this part of the residual is $0$.
\end{remark}

\begin{lemma}
\label{Lemma:ResGraExEn-2}
Let $f,v \in C^0(\R) \cap \Ps_1(\Ts^\eps)$ and $v_h = I_h v \in C^0(\R) \cap \Ps_1(\Ts^h)$ be
the $\Ps_1$ interpolation of $v$ according to $\Ts^h$
partition. Let $\fb^r, \vb^r$ and $\vb_h^r$ be the vectorizations of
$f, v, v_h$ respectively according
to $\Ts^r$, and $\Ks_c$ is defined in \eqref{Set:K_c}. Then we have the following estimate
\begin{equation}
\<f, v_h\>_r - \<f,v\>_r = \<\fb^r, \vb_h^r\>_r - \<\fb^r, \vb^r\>_r
\le \bigg[ \sum_{k=\Ks_c} \tilde{h}_k^2
\|\fb^r\|^2_{\ell^2_{\bar{\eps}^r} (\Ds^2_k)}\bigg]^{\frac{1}{2}} \| \vb'\|_{\ell^2_\eps},
\end{equation}
$\bar{\eps}^r_j = \frac{1}{2}(\eps^r_j+\eps^r_{j+1})$,
$\tilde{h}_k = \frac{1}{2}(j_{k+1}-j_k)\eps$ and $\Ds^2_k = \{j_k+1,
\ldots, j_{k+1}-1\}$. 
\end{lemma}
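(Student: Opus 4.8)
The plan is to work on the common refinement $\Ts^r$ of $\Ts^\eps$ and $\Ts^h$. Since $\Ts^r$ contains all the nodes of both meshes, $f$, $v$ and $v_h=I_hv$ are simultaneously continuous and piecewise affine on $\Ts^r$, so that both $\<f,v_h\>_r$ and $\<f,v\>_r$ are computed \emph{exactly} by the lumped trapezoidal sum \eqref{rInnerproduct}, which gives
\[
\<f,v_h\>_r-\<f,v\>_r=\sum_{j=1}^n\bar{\eps}^r_j\,f^r_j\,\big(v_h(x^r_j)-v(x^r_j)\big).
\]
The first step is to identify the surviving terms. Each node $x^r_j$ is either a node of $\Ts^h$, in which case $I_hv(x^r_j)=v(x^r_j)$ kills the term, or an atomistic point $\ell\eps$ lying in the interior of some element $T_k$ of $\Ts^h$; in the latter case $\ell\eps\notin\Omega_\a$ and $\ell\eps\notin\partial\Omega_c$, since both are nodes of $\Ts^h$ by construction, so $T_k\subset\Omega_c$, i.e.\ $k\in\Ks_c$ (after reducing the index to the reference period) and $j\in\Ds^2_k$. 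Hence
\[
\<f,v_h\>_r-\<f,v\>_r=\sum_{k\in\Ks_c}\sum_{j\in\Ds^2_k}\bar{\eps}^r_j\,f^r_j\,\big(v_h(x^r_j)-v(x^r_j)\big).
\]

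Next I would bound the interpolation error on a single continuum element $T_k=[a,b]$. Set $e:=I_hv-v$, which vanishes at $a$ and $b$; a short computation gives the representation $e(x)=\int_a^b G(x,s)v'(s)\ds$ with $G(x,s)=-(b-x)/(b-a)$ for $s<x$ and $G(x,s)=(x-a)/(b-a)$ for $s>x$, so Cauchy--Schwarz yields
\[
|e(x)|^2\le\Big(\int_a^b|G(x,s)|^2\ds\Big)\|v'\|_{L^2(T_k)}^2=\frac{(x-a)(b-x)}{b-a}\,\|v'\|_{L^2(T_k)}^2\le\frac{|T_k|}{4}\,\|v'\|_{L^2(T_k)}^2.
\]
A direct computation gives $\sum_{j\in\Ds^2_k}\bar{\eps}^r_j\le|T_k|$, and $\tilde h_k=\frac12(j_{k+1}-j_k)\eps\ge\frac12|T_k|$ because every refinement cell inside $T_k$ has length at most $\eps$; therefore $\big(\sum_{j\in\Ds^2_k}\bar{\eps}^r_j|e(x^r_j)|^2\big)^{1/2}\le\frac12|T_k|\,\|v'\|_{L^2(T_k)}\le\tilde h_k\,\|v'\|_{L^2(T_k)}$, and a Cauchy--Schwarz over $j\in\Ds^2_k$ gives
\[
\Big|\sum_{j\in\Ds^2_k}\bar{\eps}^r_j\,f^r_j\,e(x^r_j)\Big|\le\|\fb^r\|_{\ell^2_{\bar{\eps}^r}(\Ds^2_k)}\Big(\sum_{j\in\Ds^2_k}\bar{\eps}^r_j|e(x^r_j)|^2\Big)^{1/2}\le\tilde h_k\,\|\fb^r\|_{\ell^2_{\bar{\eps}^r}(\Ds^2_k)}\,\|v'\|_{L^2(T_k)}.
\]
Finally, summing over $k\in\Ks_c$, a last Cauchy--Schwarz together with the fact that the elements $\{T_k\}_{k\in\Ks_c}$ are pairwise essentially disjoint and cover a subset of one period, so that $\sum_{k\in\Ks_c}\|v'\|_{L^2(T_k)}^2\le\|v'\|_{L^2[0,1]}^2=\|\vb'\|_{\ell^2_\eps}^2$ by \eqref{Eq:FirstDerivativeL2Identity}, produces exactly the asserted inequality.

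The analysis itself is elementary; the delicate part is the bookkeeping in the first step — matching the nodes of $\Ts^r$ against those of $\Ts^\eps$ and $\Ts^h$, verifying that the surviving contributions are precisely those from nodes interior to continuum elements, and handling the periodic wrap-around at the ends of $[0,1]$ so that no element's contribution is counted twice. The comparison $\tilde h_k\ge\frac12|T_k|$, which is what turns the element length into the quantity $\tilde h_k$ appearing in the statement, also has to be read off carefully from the construction of $\Ts^r$ and implicitly uses the standing assumption $|T_k|\ge2\eps$ on continuum elements.
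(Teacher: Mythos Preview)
Your argument is correct, but the route differs from the paper's. After the common first step (localising the difference to interior $\Ts^r$-nodes of continuum elements and applying Cauchy--Schwarz over $j\in\Ds^2_k$), the paper bounds the discrete $\ell^2$-norm of $g_j=(v_h^r)_j-v^r_j$ by invoking the discrete Friedrichs inequality of Appendix~\ref{AppendixB} (Lemma~\ref{Lemma:DiscFriedrichNonUniMesh}) together with Riesz--Thorin interpolation to pass from $p\in\{1,\infty\}$ to $p=2$; this produces the factor $\tilde h_k$ and a discrete derivative norm $\big(\sum_j\eps^r_j(g'_j)^2\big)^{1/2}=\|v'-v_h'\|_{L^2(T_k)}$, which is then reduced to $\|v'\|_{L^2(T_k)}$ via the best-approximation Lemma~\ref{lemmaA2}. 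You instead write down the explicit Green's-function representation of the linear interpolation error and obtain the pointwise bound $|e(x)|^2\le\frac{(x-a)(b-x)}{b-a}\|v'\|_{L^2(T_k)}^2$ directly, then sum against the weights $\bar\eps^r_j$. Your approach is more elementary and self-contained: it bypasses the discrete Sobolev machinery and the Riesz--Thorin step entirely. The paper's approach, on the other hand, reuses the same Appendix~\ref{AppendixB} lemmas that are needed anyway for Lemma~\ref{Lemma:ResGraExEn-3}, so there is an economy of tools. One small correction: the inequality $\tilde h_k\ge\frac12|T_k|$ follows simply from $\eps^r_j\le\eps$ (since $\Ts^r$ refines $\Ts^\eps$), and does \emph{not} require the assumption $|T_k|\ge2\eps$ you mention at the end.
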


\begin{proof}
Using the fact that $(v_h^r)_{j_k} = v^r_{j_k}$ and by Cauchy-Schwarz
inequality, we have
\begin{align}
\<\fb^r, \vb_h^r\>_r - \<\fb^r, \vb^r\>_r = &\sum_{j = 1}^n \frac{1}{2} (\eps^r_j+\eps^r_{j+1}) (f^r_j
(v_h^r)_j-f^r_j v^r_j) \nonumber\\
= &\sum_{k \in \Ks_c} \sum_{j = j_{k-1}} ^{j_{k}-1} \bar{\eps}^r_j f^r_j
\big[ (v_h^r)_j- v^r_j \big] \nonumber\\
\le&\sum_{k \in \Ks_c}\bigg[ \sum_{j = j_{k-1}+1} ^{j_{k}-1} \bar{\eps}^r_j
(f^r_j)^2 \bigg]^{\frac{1}{2}} \bigg\{\sum_{j = j_{k-1}+1} ^{j_{k}-1}
\bar{\eps}^r_j \big[ (v_h^r)_j- v^r_j \big]^2\bigg\}^{\frac{1}{2}},
\end{align}
where $\bar{\eps}^r_j = \frac{1}{2}(\eps^r_j+\eps^r_{j+1})$. 
Upon defining $\gb$ such that $g_j = (v_h^r)_j- v^r_j$ (note $g_{j_k}
= g_{j_{k+1}}= 0$) and by Lemma \ref{Lemma:DiscFriedrichNonUniMesh} in Appendix
\ref{AppendixB} (Discrete Friedrich's Inequality) and Rieze-Thorin Theorem,
\begin{align}
\bigg\{\sum_{j = j_{k-1}+1} ^{j_{k}-1}
\bar{\eps}^r_j \big[ (v_h^r)_j- v^r_j \big]^2\bigg\}^{\frac{1}{2}} = &\bigg\{\sum_{j = j_{k-1}} ^{j_{k}}
\bar{\eps}^r_j g_j^2\bigg\}^{\frac{1}{2}}
\le \frac{1}{2}(j_{k}-j_{k-1})\eps \bigg\{ \sum_{j=j_{k-1}+1}^{j_{k}}
\eps^r_j {g'_j}^2 \bigg\}^{\frac{1}{2}},
\end{align}
where $g_j' = \smfrac{g_j-g_{j-1}}{\eps^r_j } =
(v^r)'_j-(v_h^r)'_j$. $\eps$ appears in the
last inequality since $\max_j \bar{\eps}^r_j \le \eps$. Since $v_h^r$
and $v^r$ are both piecewise linear on $\Ts^r$, we have $(v_h^r)'_j - (v^r)'_j = (v'-v_h')(x) \ \forall x \in
(x^r_{j-1}, x^r_j)$, and as a result, 
\begin{equation*}
 \bigg\{ \sum_{j=j_{k-1}+1}^{j_{k}}
\eps^r_j \big[ (v_h^r)'_j - (v^r)'_j\big]^2 \bigg\}^{\frac{1}{2}} = \int_{x^r_{j_{k-1}}}^ {x^r_{j_{k}}}  |(v'-v_h')(x)|^2
\dx = \| v'-v_h'\|_{L^2[x^r_{j_{k-1}}, x^r_{j_{k}}]}.
\end{equation*}
By Lemma \ref{lemmaA2} in Appendix \ref{AppendixA}, 
\begin{equation*}
\| v'-v_h'\|_{L^2[x^r_{j_{k-1}}, x^r_{j_{k}}]} \le \| v'\|_{L^2[x^r_{j_{k-1}}, x^r_{j_{k}}]}.
\end{equation*}
Put all the results above together and apply Cauchy-Schwarz inequality, we obtain 
\begin{align}
&\sum_{k \in \Ks_c}\bigg[ \sum_{j = j_{k-1}+1} ^{j_{k}-1} \bar{\eps}^r_j
(f^r_j)^2 \bigg]^{\frac{1}{2}} \bigg\{\sum_{j = j_{k-1}+1} ^{j_{k}-1}
\bar{\eps}^r_j \big[ (v_h^r)_j- v^r_j \big]^2\bigg\}^{\frac{1}{2}}
\nonumber \\
\le &\sum_{k \in \Ks_c} \Bigg\{ \tilde{h}_k \Big(\sum_{j =
 j_{k-1}+1 }^{j_{k}-1} \bar{\eps}^r_j
{f^r_j}^2\Big)^{\frac{1}{2}} \| v'\|_{L^2(x^r_{j_{k-1}}, x^r_{j_{k}})} \Bigg\}
 \le \bigg[ \sum_{k=\Ks_c} \tilde{h}_k^2 \Big(\sum_{j =
 j_{k-1}+1 }^{j_{k}-1} \bar{\eps}^r_j
{f^r_j}^2\Big) \bigg]^{\frac{1}{2}} \| v'\|_{L^2{[0,1]}}.
\end{align}
The eatimate in the theorem holds as $\| v'\|_{L^2{[0,1]}} = \|
\vb'\|_{\ell^2_\eps}$ for $v \in C^0(\R) \cap \Ps_1(\Ts^\eps)$.
\end{proof}

\begin{lemma}
\label{Lemma:ResGraExEn-3}
Let $f,v \in C^0(\R) \cap \Ps_1(\Ts^\eps)$ and $v_h = I_h v \in C^0(\R) \cap \Ps_1(\Ts^h)$ be
the $\Ps_1$ interpolation of $v$ according to the $\Ts^h$. Let
$\fb^r,\vb^r$ and $\vb_h^r$ be the vectorizations $f, v$ and $v_h$ according
to $\Ts^r$. If $\int_0^1 v_h = 0$, then we have the following estimate
\begin{equation}
\<f, v_h\>_h - \<f,v_h\>_r \le \Bigg\{ \frac{1}{8}\bigg[ (n\eps)^4\sum_{k= \Ks_c}\hat{h}_{k+1}^4
\|{\fb^r}''\|^2_{\ell^2_{\bar{\eps}^r}(\Ds^2_k)}\bigg]^{\frac{1}{2}}
+\bigg[ \sum_{k=\Ks_c} \hat{h}_{k+1}^4
\|{\fb^r}'\|^2_{\ell^2_{\eps^r(\Ds^1_k)}}\bigg]^{\frac{1}{2}} \Bigg\}\|\vb'\|_{\ell^2_\eps}.
\end{equation}
where $\Ks_c$ is defined in \eqref{Set:K_c}, $\Ds_k^1
= \{j_k+1, \ldots, j_{k+1}\} $ and $\hat{h}_k$ will be defined in the proof.
\end{lemma}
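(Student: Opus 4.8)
The quantity to bound is $\langle f, v_h\rangle_h - \langle f, v_h\rangle_r$, the difference between the nodal-interpolation quadrature of $f v_h$ on $\Ts^h$ and the finer quadrature on $\Ts^r$. Both inner products are exact integrals of $\Ps_1$-interpolations of the product $f v_h$; on each coarse element $T_k = [x^h_{k-1}, x^h_k]$ ($k \in \Ks_c$, since on atomistic elements the two partitions agree and the contribution vanishes) the difference is a standard composite-trapezoid quadrature error. Precisely, writing $g := f v_h$, on $T_k$ we have $\langle\cdot\rangle_h$ using one trapezoid on $[x^h_{k-1}, x^h_k]$ while $\langle\cdot\rangle_r$ uses the composite trapezoid over the subnodes $x^r_{j_{k-1}}, \dots, x^r_{j_k}$. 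The first step is therefore to localize the error as $\sum_{k\in\Ks_c}\big(\text{trap}_{T_k}(g) - \text{comptrap}_{T_k}(g)\big)$.

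Second, I would handle each local term. The product $g = f v_h$ has $f \in \Ps_1(\Ts^\eps)$ and $v_h \in \Ps_1(\Ts^h)$ affine on $T_k$; so $g$ restricted to $T_k$ is piecewise quadratic with breakpoints at the interior $\eps$-nodes. The composite-trapezoid-minus-single-trapezoid error splits naturally into (i) the part coming from the second derivative of $g$ on each subinterval — which, since $v_h$ is affine, involves $v_h \cdot f''$ (in the discrete sense, i.e. jumps of $f'$ at the $\eps$-nodes inside $T_k$) — and (ii) a boundary/linear-correction part coming from the fact that $v_h$ is only piecewise linear globally, which contributes a term in $f'$ times a first-order factor. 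This is where the two sums with $\|{\fb^r}''\|$ and $\|{\fb^r}'\|$ come from. I would introduce $\hat h_{k+1}$ (presumably $\tfrac12(x^h_{k+1}-x^h_{k-1})$ or a comparable local mesh width) as the local weight; the $(n\eps)^4 = 1$ factor keeps the statement dimensionally transparent, and the $\tfrac18$ is the usual trapezoid-rule constant. For the second-derivative part I would invoke the discrete Friedrich's inequality (Lemma~\ref{Lemma:DiscFriedrichNonUniMesh}) exactly as in Lemma~\ref{Lemma:ResGraExEn-2} to trade $\|v_h\|$ on $T_k$ against $\|v_h'\|_{L^2(T_k)}$, using $\int_0^1 v_h = 0$ together with periodicity to control the mean; then $\|v_h'\|_{L^2(T_k)} \le \|v'\|_{L^2(T_k)}$ by Lemma~\ref{lemmaA2}, and finally Cauchy–Schwarz over $k$ assembles the element contributions into $\|\vb'\|_{\ell^2_\eps}$.

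Third, assemble: apply Cauchy–Schwarz in $k$ to pull out $\big[\sum_k \hat h_{k+1}^4 \|\cdot\|^2\big]^{1/2}$ times the $L^2$ norm of $v'$ (equivalently $\|\vb'\|_{\ell^2_\eps}$ by \eqref{Eq:FirstDerivativeL2Identity}), split the two sources of error with the triangle inequality, and read off the stated bound.

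The main obstacle is the bookkeeping in step two: carefully extracting the correct local error formula for the composite trapezoidal rule applied to a product $f v_h$ where $v_h$ is globally only piecewise linear (so the naive "error $= \tfrac{1}{12}h^2 g''$" does not apply across the element, and one must track the contribution of $v_h$'s kinks at coarse nodes versus $f$'s kinks at fine nodes), and then matching the resulting discrete second-difference of $f$ to the norm $\|{\fb^r}''\|_{\ell^2_{\bar\eps^r}(\Ds^2_k)}$ with the right power of the local mesh size $\hat h_{k+1}$. Once that identity is pinned down, the Friedrich/interpolation/Cauchy–Schwarz chain is routine and parallels Lemma~\ref{Lemma:ResGraExEn-2}.
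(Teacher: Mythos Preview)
Your outline is largely correct up to the point where you handle the $v_h\cdot f''$ term, but there is a genuine gap in how you propose to control $\|v_h\|$.

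You write that you would ``invoke the discrete Friedrich's inequality (Lemma~\ref{Lemma:DiscFriedrichNonUniMesh}) exactly as in Lemma~\ref{Lemma:ResGraExEn-2} to trade $\|v_h\|$ on $T_k$ against $\|v_h'\|_{L^2(T_k)}$''. This does not work. In Lemma~\ref{Lemma:ResGraExEn-2} the quantity being bounded is $g_j=(v_h^r)_j-v^r_j$, which vanishes at the coarse nodes $j_{k-1}$ and $j_k$; that is precisely the hypothesis of Friedrich's inequality, so a \emph{local} estimate on $T_k$ is available. Here the quantity multiplying $(f^r)''_j$ is $(v_h^r)_j$ itself, which has no reason to vanish at the coarse nodes, and a local Friedrich's bound on $T_k$ is simply false.

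What the paper does instead is to bound $\|\vb_h^r\|_{\ell^2_{\bar\eps^r}}$ \emph{globally} over $[0,1]$ by the discrete Poincar\'e inequality (Lemma~\ref{Lemma:DiscPoincareNonUniMesh}), using the hypothesis $\int_0^1 v_h=0$ to verify the zero-mean condition $\sum_j\bar\eps^r_j(v_h^r)_j=0$. This is exactly why the factor $(n\eps)$ appears in the statement: it is the global Poincar\'e constant on $[0,1]$ with the refined mesh $\Ts^r$, not a local mesh width, and in particular your remark that ``$(n\eps)^4=1$'' is incorrect (one has $n\ge N$, so $n\eps\ge 1$, with strict inequality whenever some coarse node falls off the $\eps$-lattice). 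After the global Poincar\'e step, one passes from $\|(\vb_h^r)'\|_{\ell^2_{\eps^r}}=\|v_h'\|_{L^2[0,1]}$ to $\|v'\|_{L^2[0,1]}=\|\vb'\|_{\ell^2_\eps}$ via Lemma~\ref{lemmaA1} (interpolation stability), not Lemma~\ref{lemmaA2} as you wrote.

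The rest of your plan---localizing to $k\in\Ks_c$, recognizing the difference as a linear-interpolation error for $g_j=(fv_h)(x^r_j)$ and bounding it by discrete second differences $g''_j$ (this is Theorem~\ref{Theo:BoundOnIntErr}), and then expanding $g''_j$ by the discrete product rule into an $f''v_h$ piece and an $f'v_h'$ piece---matches the paper.
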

\begin{proof}
Since $I_h(f v_h)$ is also piecewise linear with respect to the $\Ts^r$ partition,
we apply the trapezoidal rule here to evaluate $\<f,v_h\>_h =
\int_0^1 I_h(fv_h) \dx$ to obtain
\begin{align}
&\<f,v_h\>_h - \<f,v_h\>_r \nonumber\\
=&\sum_{k \in \Ks_c} \Bigg\{ \bigg[ \frac{1}{2}\eps^r_{j_{k-1}+1}
I_h(fv_h)(x^r_k)+\sum_{j = j_{k-1}+1}^{j_{k}-1}
\frac{1}{2}(\eps^r_{j}+\eps^r_{j+1}) I_h (f
v_h)(x^r_{j})+\frac{1}{2}\eps^r_{j_{k}}I_h(fv_h)(x^r_{j_{k}})
\bigg]  \nonumber\\
& -\bigg[ \frac{1}{2}\eps^r_{j_{k-1}+1}
(fv_h)(x^r_{j_{k-1}+1})+ \sum_{j = j_{k-1}+1}^{j_{k}-1}
\frac{1}{2}(\eps^r_{j}+\eps^r_{j+1})
(fv_h)(x^r_{j})+\frac{1}{2}\eps^r_{j_{k}}(fv_h)(x^r_{j_{k}})\bigg]
\Bigg\}.
\end{align}

We define $\gb$ and $\Gb$ such that $g_j = (fv_h)(x^r_j)$ and $G_j = (I_h(f
v_h))(x^r_j)$. It is easy to check that $g_{j_k} = G_{j_k}$ and 
\begin{equation}
G_{j_{k-1}+i} = g_{j_{k-1}} +
\frac{\sum_{\ell=1}^i\eps^r_{j_{k-1}+\ell}}{\eps^h_{k}}
(g_{j_{k}}-g_{j_{k-1}})\quad \forall k \in \Ks_c \quad and
\quad i = 1, \ldots, j_{k}-j_{k-1},
\end{equation}
where $\eps^h_{k} =\sum_{j=j_{k-1}
  +1}^{j_{k}}\eps^r_{j} =x_{k}-x_{k-1}$. Therefore, by Theorem \ref{Theo:BoundOnIntErr}, we obatin the following estimate
\begin{align}
&|\<f,v_h\>_h - \<\fb^r,I_r\vb^h\>_r | \nonumber\\
= &\Bigg|\sum_{k \in \Ks_c}\bigg[ \frac{1}{2}\eps^r_{j_{k-1}}
(g_{j_{k-1}}-G_{j_{k-1}}) +\sum_{j = j_{k-1}+1}^{j_{k}-1}
\frac{1}{2}(\eps^r_{j}+\eps^r_{j+1}) (g_{j}-G_{j})+\frac{1}{2}\eps^r_{j_{k}}(g_{j_{k}}-G_{j_{k}})\bigg] \Bigg|
\nonumber\\
\le& \sum_{k \in \Ks_c}\frac{1}{4}
\frac{\big((j_{k}-j_{k-1})\eps\big) \big(
  (j_{k}-j_{k-1}+1)\eps\big)^2}{\eps^h_{k}} \|\gb''\|_{\ell_{\bar{\epsb}^r}^1 (\mathcal{D}^2_k)},
\end{align}
where $\gb''$ is the second finite difference derivative with respect
to the $\Ts^r$. 

By the definition of $\fb^r$ and $\vb_h^r$, $g_j =(fv_h)(x^r_j)
=f^r_{j} (v_h^r)_{j}$. Using $(v_h^r)''_j = 0 \  \forall j \in
\Ds^2_k$, $g_j''$ can be written as
\begin{equation}
g_j''=(fv_h^r)''_j = (f^r)''_j(v_h^r)_j +
\frac{\eps^r_{j}}{\bar{\eps}^r_j}(f^r)'_{j}(v_h^r)'_{j} +\frac{\eps^r_{j+1}}{\bar{\eps}^r_j}(f^r)'_{j+1}(v_h^r)'_{j+1}.
\end{equation}

Noting that $\frac{\eps^r}{\bar{\eps}_j} \le 2$ and
  $\frac{\eps^r_{j+1}}{\bar{\eps}_j} \le 2$ and defining $\hat{h}_{k} :=
\bigg[\frac{((j_{k}-j_{k-1})\eps) 
  (j_{k}-j_{k-1}+1)\eps\big)^2}{\eps^h_{k}}\bigg]^{\frac{1}{2}}$, we have the following estimate
\begin{align}
\<f,v_h\>_h - \<f,v_h\>_r \le &\sum_{k\in \Ks_c}\frac{1}{4}\hat{h}_k^2 \bigg[ \sum_{j =
  j_{k-1}+1}^{j_{k}-1} \bar{\eps}^r_j \big|(fv_h^r)''_j\big|\bigg]
\nonumber\\
\le & \sum_{k \in \Ks_c} \frac{1}{4}\hat{h}_{k}^2 \bigg[ \sum_{j
  =j_{k-1}+1}^{j_{k} -1}  \bar{\eps}^r_j \big|(f^r)''_j\big| \big| (v_h^r)_j \big| +
4 \sum_{j =j_{k-1}+1}^{j_{k}}\eps^r_{j} \big|(f^r)'_{j}\big| \big|
(v_h^r)'_{j}\big| \bigg] 
\nonumber\\
\le & \sum_{k \in \Ks_c}\frac{1}{4}\hat{h}_{k}^2 \bigg[
\|{\fb^r}''\|_{\ell^2_{\bar{\eps}^r}(\Ds_k^2)}
\|\vb_h^r\|_{\ell^2_{\bar{\eps}^r}(\Ds_k^2)}
+4\|{\fb^r}'\|_{\ell^2_{\epsb^r(\Ds_k^1)}}
\|(\vb_h^r)'\|_{\ell^2_{\epsb^r (\Ds_k^1)}}\bigg] 
\nonumber\\
\le& \frac{1}{4}\bigg[ \sum_{k \in \Ks_c} \hat{h}_{k}^4
\|{\fb^r}''\|^2_{\ell^2_{\bar{\epsb}^r}(\Ds_k^2)}\bigg]^{\frac{1}{2}}
\|\vb_h^r\|_{\ell^2_{\bar{\epsb}^r}} +\bigg[ \sum_{k \in \Ks_c}\hat{h}_{k}^4
\|{\fb^r}'\|^2_{\ell^2_{\epsb^r (\Ds_k^1)}}\bigg]^{\frac{1}{2}}
\|(\vb_h^r)'\|_{\ell^2_{\epsb^r}}.
\end{align}

For further estimate, we first bound
$\|\vb_h^r\|_{\ell^2_{\bar{\epsb}^r}}$ by
$\|(\vb_h^r)'\|_{\ell^2_{\epsb^r}}$. Since $v_h(x)$ is piecewise linear
with respect to $\Ts^r$ partition, we can apply the trapezoidal
rule to the integration on each element to get
\begin{equation}
\sum_{j=1}^n \bar{\eps}^r_j (v_h^r)_j = \sum_{j=1}^n \frac{1}{2}
(\eps^r_j + \eps^r_{j-1}) (v^r_h)_j = \sum_{j=1}^n \eps^r_j
\frac{1}{2}\big[ (v^r_h)_j + (v^r_h)_{j+1} \big] = \int_0^1 v_h(x) \dx=0.
\end{equation}
The last equality holds by the periodic condition on $v_h$. Thus, we can apply
Lemma \ref{Lemma:DiscPoincareNonUniMesh} in Appendix \ref{AppendixB} and Riez-Thorin Theorem to obtain
\begin{equation}
\|\vb_h^r\|_{\ell^2_{\bar{\eps}^r}} \le \frac{1}{2} n\eps \bigg(\sum_{j=1}^n
\eps^r_j {(v_h^r)'_j}^2\bigg)^{\frac{1}{2}}.
\end{equation}
Since $v_h'(x) = (v_h^r)'_j$ on $(x^r_{j-1}, x^r_j)$, 
\begin{equation}
\sum_{j=1}^n
\eps^r_j {(v_h^r)'_j}^2 = \int_0^1 (v_h')^2 \dx = \| v_h'\|_{L^2[0,1]}.
\end{equation}
By Lemma \ref{lemmaA1} in Appendix \ref{AppendixA}, 
\begin{equation}
\| v_h'\|_{L^2[0,1]} \le \|v'\|_{L^2[0,1]} = \|\vb'\|_{\ell^2_\eps}.
\end{equation}
Combine these results, the estimate stated in the
theorem is easy to establish.
\end{proof}

Having the three lemma and distribute the contribution to each
element, we now give the theorem which essentially gives the
estimate of the residual due to the external force.
\begin{theorem}
\label{Theo:RisidualExternalForce}
For $f,v \in \Us$ and $J_\Us$ defined in \eqref{definitionJv}, we have
\begin{equation}
\label{ResidualExternalForce}
\|\<f,J_\Us \cdot\>_h - \<f,\cdot\>_\eps\|_{\Us^{-1,2}} \le \big\{ \sum_{k \in \Ks_c}
{\eta_k^f}^2 \big\}^{\frac{1}{2}} =: \mathscr{E}_{\rm ext}(f),
\end{equation}
where
\begin{align}
\eta_k^f =& \bigg\{ \frac{1}{128}\big[ \eps^3 (f'_{\ell_{k-1}+1})^2 + \eps^3 (f'_{\ell_{k}+1})^2 \big]^2 
+  \tilde{h}_k^2 \|\fb^r\|^2_{\ell^2_{\bar{\epsb}^r}
  (\Ds^2_k)} \nonumber \\
&+ \frac{1}{64}(n\eps)^4\hat{h}_{k+1}^ 4
\|{\fb^r}''\|^2_{\ell^2_{\bar{\epsb}^r}(\Ds^2_k)} + \hat{h}_{k+1}^4
\|{\fb^r}'\|^2_{\ell^2_{\epsb^r(\Ds^1_k)}} \bigg\}^{\frac{1}{2}},
\end{align}
and $\Ks_c$ is defined in \eqref{Set:K_c}, $\Ds_k^2$ is defined in Lemma \ref{Lemma:ResGraExEn-1},
$\tilde{h}_k$, $\Ds_k^1$ is defined in Lemma
\ref{Lemma:ResGraExEn-2}, and $\hat{h}_{k+1}$
is defined in Lemma \ref{Lemma:ResGraExEn-3}. 
\end{theorem}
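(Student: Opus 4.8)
The statement is an assembly step: it combines the three-way split \eqref{Eq:ResGraExEnSplit} of the external-force residual with Lemmas~\ref{Lemma:ResGraExEn-1}--\ref{Lemma:ResGraExEn-3} and then regroups the resulting bounds element by element. Fix $v\in\Us$, set $v_h:=J_\Us v$, and note that $v_h$ has the same derivative as $I_hv$ and satisfies the mean-zero condition of $\Us_{\qc}$, so that Lemmas~\ref{Lemma:ResGraExEn-2} and \ref{Lemma:ResGraExEn-3} apply with this $v_h$. By \eqref{Eq:ResGraExEnSplit},
\[
\<f,v_h\>_h-\<f,v\>_\eps=\big[\<f,v\>_r-\<f,v\>_\eps\big]+\big[\<f,v_h\>_r-\<f,v\>_r\big]+\big[\<f,v_h\>_h-\<f,v_h\>_r\big],
\]
and I would bound the three brackets by Lemmas~\ref{Lemma:ResGraExEn-1}, \ref{Lemma:ResGraExEn-2}, \ref{Lemma:ResGraExEn-3} respectively. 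Each bound is of the form (a quantity depending on $f$ and on the partitions $\Ts^\eps,\Ts^h,\Ts^r$) times $\|\vb'\|_{\ell^2_\eps}$, and $\|\vb'\|_{\ell^2_\eps}=\|v'\|_{L^2[0,1]}=\|v\|_{\Us^{1,2}}$ by \eqref{Eq:FirstDerivativeL2Identity}; hence, by the triangle inequality and the definition of the dual norm,
\[
\big\|\<f,J_\Us\cdot\>_h-\<f,\cdot\>_\eps\big\|_{\Us^{-1,2}}\le C_1(f)+C_2(f)+C_3(f),
\]
where $C_1,C_2$ are the prefactors of Lemmas~\ref{Lemma:ResGraExEn-1}, \ref{Lemma:ResGraExEn-2} and $C_3$ that of Lemma~\ref{Lemma:ResGraExEn-3}, the latter itself being a sum of two square roots of sums indexed by $\Ks_c$.

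The remaining work is purely to convert the right-hand side into a single element-indexed quantity $\{\sum_{k\in\Ks_c}(\eta_k^f)^2\}^{1/2}$. The contributions from Lemmas~\ref{Lemma:ResGraExEn-2} and \ref{Lemma:ResGraExEn-3} already carry sums over $k\in\Ks_c$, with the weighted norms $\|\fb^r\|_{\ell^2_{\bar{\epsb}^r}(\Ds^2_k)}$, $\|{\fb^r}''\|_{\ell^2_{\bar{\epsb}^r}(\Ds^2_k)}$ and $\|{\fb^r}'\|_{\ell^2_{\epsb^r(\Ds^1_k)}}$ attached to the patches of $\Ts^r$-nodes lying inside the relevant $\Ts^h$-elements; tracking which element each patch belongs to is what produces the index shift (the Lemma~\ref{Lemma:ResGraExEn-3} data naturally attach to $T_{k+1}$, hence the $\hat h_{k+1}$ in $\eta_k^f$). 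Only the Lemma~\ref{Lemma:ResGraExEn-1} term needs re-indexing, since its sum runs over $\Ks_U$, the set of $\Ts^h$-nodes $x^h_k$ not lying on $\Ts^\eps$; because by construction every atom position in $\Omega_\a$ is a node of $\Ts^h$, each such $x^h_k$ lies in $\overline{\Omega_c}$ and is an endpoint of a continuum element, so its contribution $\eps|f'_{\ell_k+1}|^2$ can be charged to the (one or two) continuum elements $T_k,T_{k+1}$ sharing that node, an overcount by at most a factor $2$; this is why $\eta_k^f$ carries the two endpoint terms $(f'_{\ell_{k-1}+1})^2$ and $(f'_{\ell_k+1})^2$ for $T_k$. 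With all four pieces now written as sums over $\Ks_c$, I would define $(\eta_k^f)^2$ to be the sum of the four element-$k$ contributions and combine the four square roots using elementary inequalities (triangle inequality, Cauchy--Schwarz, subadditivity of $t\mapsto\sqrt t$), folding the combinatorial and numerical factors into the constants displayed in the statement; this yields $\|\<f,J_\Us\cdot\>_h-\<f,\cdot\>_\eps\|_{\Us^{-1,2}}\le\mathscr{E}_{\rm ext}(f)$.

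The conceptual work is entirely in Lemmas~\ref{Lemma:ResGraExEn-1}--\ref{Lemma:ResGraExEn-3}, the identity \eqref{Eq:FirstDerivativeL2Identity} and the definition of $\|\cdot\|_{\Us^{-1,2}}$; the obstacle here is bookkeeping. The delicate points are: reconciling the node index set $\Ks_U$ with the element index set $\Ks_c$ without losing or double-counting terms; making sure the patches $\Ds^1_k,\Ds^2_k$ and their weights line up with the correct element $T_k$ (or $T_{k+1}$); and propagating the numerical constants ($\tfrac18$ from Lemma~\ref{Lemma:ResGraExEn-1}, $\tfrac14$ from Lemma~\ref{Lemma:ResGraExEn-3}, and the factor from combining the four square roots) through the squaring and regrouping so that the final estimate comes out with the constants exactly as claimed.
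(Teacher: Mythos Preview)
Your approach is essentially the paper's: use the three-way split \eqref{Eq:ResGraExEnSplit}, bound each piece by the corresponding lemma, then regroup element by element. The bookkeeping discussion you give is in fact more detailed than the paper's, which simply says ``distribute the contribution to each element''.

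There is, however, one point you skate over that is exactly the content of the paper's proof. You set $v_h:=J_\Us v$ and assert that Lemmas~\ref{Lemma:ResGraExEn-2} and \ref{Lemma:ResGraExEn-3} apply because $v_h$ has the same derivative as $I_h v$ and lies in $\Us_{\qc}$. For Lemma~\ref{Lemma:ResGraExEn-3} that is fine: its proof uses only that $v_h\in\Ps_1(\Ts^h)$ and $\int_0^1 v_h=0$. But Lemma~\ref{Lemma:ResGraExEn-2} is stated for $v_h=I_h v$, and its proof genuinely uses the nodal identity $(v_h^r)_{j_k}=v^r_{j_k}$, i.e.\ $v_h(x^h_k)=v(x^h_k)$, to get $g_{j_k}=0$ before invoking the discrete Friedrichs inequality. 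Since $J_\Us v = I_h v - C$ with $C=\tfrac12\sum_k(x^h_{k+1}-x^h_{k-1})v(x^h_k)$ generally nonzero, this nodal identity fails for your $v_h$, and ``same derivative'' does not rescue it.

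The paper's fix is to shift the test function instead: define $w:=v-C$, so that $w_h:=I_h w=J_\Us v$ on the nose, and observe that $\<f,C\>_\eps=0$ because $f\in\Us$ (the $\eps$-trapezoidal sum equals $\int_0^1 f=0$). Then
\[
\<f,J_\Us v\>_h-\<f,v\>_\eps=\<f,w_h\>_h-\<f,w\>_\eps,
\]
the three lemmas apply verbatim to the pair $(w,w_h)$, and the final bound comes out in terms of $\|w'\|_{L^2}=\|v'\|_{L^2}$. An equivalent patch within your write-up would be to note that $\<f,1\>_r=\int_0^1 I_r f=\int_0^1 f=0$, so the constant shift drops out of the second bracket and Lemma~\ref{Lemma:ResGraExEn-2} can be applied to $I_h v$ rather than to $J_\Us v$; but you do need to say this explicitly.
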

\begin{proof}
We can not directly apply the three lemma to estimate the three parts
in \eqref{Eq:ResGraExEnSplit}. The reason is that $J_{\Us} v \neq v_h$, which is the direct interpolation of $v$ according to $\Ts^h$. The way to
circumvent this difficulty is by defining 
\begin{equation}
\label{Eq:Changevtow}
w := v -\sum_{k =
  1}^K\frac{1}{2}(x_{k+1}-x_{k-1})v(x_k) \text{ and
} w_h := I_h w =  J_{\Us} v,
\end{equation}
and noting that 
\begin{equation}
\label{ForceResidualw}
\<f,J_\Us v\>_h - \<f,v\>_\eps = \<f,  w_h\>_h -\<f, w\>_\eps - \<f,
C\>_\eps = \<f,  w_h\>_h -\<f, w\>_\eps,
\end{equation}
and $w'(x) = v'(x) \ \forall x \in \R$. Then by the three lemma, we have
\begin{equation}
\big|\<f,J_\Us v\>_h - \<f,v\>_\eps\big| = \big|\<f,  w_h\>_h -\<f,
w\>_\eps\big| \le \big\{ \sum_{k \in \Ks_c}
{\eta_k^f}^2 \big\}^{\frac{1}{2}} \|w'\|_{L^2[0,1]} = \big\{ \sum_{k \in \Ks_c}
{\eta_k^f}^2 \big\}^{\frac{1}{2}} \|v'\|_{L^2[0,1]},
\end{equation}
which establishes the estimate in the theorem.
\end{proof}

\section{Stability}
\label{Sec:Stability}
Stability of the QC approximation is the second key ingredient for
deriving an a posteriori error bounds. Since we would like to bound
the error of the deformation gradient in $L^2$-norm, we derive the $L^2$
stability estimate in this section. The procedure of deriving the a
posteriori stability condition largely follows that of the a priori
stability condition in \cite{Wang:2010a}.

For an a posteriori error analysis, the natural notion of stability
for energy minimization problem is the coercivity(or, positivity) of
the atomistic Hessian at the projected QC solution $J_{\Us_{\qc}}
y_{\qc}$: 
\begin{equation}
E''_\a(J_{\Us_{\qc}} y_{\qc}) [v,v] \ge c_\a(y_{\qc}) \| v\|_{L^2[0,1]}^2 \quad
\forall v \in \Us,
\label{Stability-1}
\end{equation}
for some constant $c_\a(y_{\qc}) >0$. To avoid notational difficulty, we
vectorize the above inequality and
work on $\Us^\eps$ instead. Let $J_{\Us_{\qc}} \yb^{\qc}$ be the
vectorization of $J_{\Us_{\qc}} y_{\qc}$, then \eqref{Stability-1} is equivalent to 
\begin{equation}
E''_\a(J_{\Us_{\qc}} \yb^{\qc}) [\vb,\vb] \ge c_\a(J_{\Us_{\qc}} \yb^{\qc}) \| \vb \|_{\ell^2_{\epsb}}^2 \quad
\forall \vb \in \Us.
\label{Stability-2}
\end{equation}
In the remainder of this section,
we derive the explicit condition on $\yb^{\qc}$ such that
\eqref{Stability-2} holds.

The Hessian operator of the atomistic model is given by
\begin{equation*}
E''_\a(\yb)[\vb,\vb] = \eps \sum_{\ell=1}^N \phi''(y_\ell')|v_\ell'|^2 + \eps
\sum_{\ell=1}^N \phi''(y_\ell' + y_{\ell+1}') |v_\ell'
+ v'_{\ell+1}|^2 \quad \forall y \in \Ys.
\end{equation*} 
We note that the 'non-local' Hessian terms $|v'_\ell
+ v'_{\ell+1}|^2$ can be rewritten in terms of the 'local' terms
$|v'_\ell|^2$ and $|v'_{\ell+1}|^2$ and a strain-gradient
correction,
\begin{equation*}
|v'_\ell + v'_{\ell+1}|^2 = 2|v'_\ell|^2+2
|v'_{\ell+1}|^2 - \eps^2 |v''_\ell|^2. 
\end{equation*}
Using this formula, we can rewrite the Hessian in the form
\begin{equation*}
E_\a''(\yb)[\vb,\vb] = \eps \sum_{\ell=1}^N A_\ell |v_\ell'|^2 + \eps
\sum_{\ell = 1}^{N} B_\ell |v_\ell''|^2,
\end{equation*}
where
\begin{align}
&A_{\ell}(\yb) = \phi''(y_\ell')+2\phi''(y_{\ell-1}' + y_\ell') +
2\phi''(y_\ell'+y_{\ell+1}')
\label{StabilityConstant-1}
\\
& B_{\ell}(\yb) = -\phi''(y'_{\ell}+y'_{\ell+1}) \nonumber.
\end{align}

Recall our assumption that $\phi$ is convex in $(0, {r_\ast})$ and
concave in $({r_\ast}, +\infty)$. For typical pair interaction
potentials, $y'_{\ell} < {r_\ast}/2$ can only be achieved under
extreme compressive forces. Since, under such extreme conditions a
pair potential may be an inappropriate model to employ anyhow, it is
not too restrictive to assume that the the projected QC solution $J_{\Us_{\qc}}\yb^{\qc}$
satisfies
\begin{displaymath}
(J_{\Us_{\qc}}y^{\qc})'_\ell \ge {r_\ast}/2  \qquad  \forall \ell \in \Z.
\end{displaymath}
As a result of this assumption, and the properties of $\phi$, we have
$-\phi''(y'_{\ell}+y'_{\ell+1}) \ge 0 \ \forall \ell \in \Z$ and thus $B_\ell \ge 0 \  \forall \ell \in \Z$.

As an immediate consequence we obtain the following lemma, which gives
sufficient conditions under which the a posteriori stability of QC approximation can be
guaranteed.
\begin{lemma}
  \label{th:stab_lemma}
  Let $J_{\Us_{\qc}} \yb^{\qc} \in \Ys^\eps$ satisfies $\min_{\ell} (J_{\Us_{\qc}})'_{\ell} \ge {r_\ast}/2$;
  then, 
  \begin{displaymath}
    E''_\a(J_{\Us_{\qc}}\yb^{\qc})[\vb,\vb] \ge A_{\ast}(J_{\Us_{\qc}}\yb^{\qc}) \| \vb'
    \|_{\ell_{\eps}^2}^2 \qquad \forall \vb \in \Us, \quad
    \text{where} \quad
    A_\ast(J_{\Us_{\qc}}\yb^{\qc}) = \min_{\ell = 1, \dots, N} A_\ell(J_{\Us_{\qc}}\yb^{\qc}).
  \end{displaymath}
  The coefficients $A_\ell(J_{\Us_{\qc}}\yb^{\qc})$ are defined in
  \eqref{StabilityConstant-1}.
\end{lemma}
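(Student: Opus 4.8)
The plan is to start from the rewritten form of the atomistic Hessian that is already displayed just before the lemma, namely
\begin{displaymath}
E_\a''(\yb)[\vb,\vb] = \eps \sum_{\ell=1}^N A_\ell(\yb) |v_\ell'|^2 + \eps
\sum_{\ell = 1}^{N} B_\ell(\yb) |v_\ell''|^2,
\end{displaymath}
evaluated at $\yb = J_{\Us_{\qc}}\yb^{\qc}$. The hypothesis $\min_\ell (J_{\Us_{\qc}}y^{\qc})'_\ell \ge r_\ast/2$ is exactly what was used in the paragraph preceding the statement to conclude $B_\ell(J_{\Us_{\qc}}\yb^{\qc}) \ge 0$ for all $\ell$, since then $y'_\ell + y'_{\ell+1} \ge r_\ast$ lies in the concave region where $\phi'' \le 0$, hence $-\phi''(y'_\ell+y'_{\ell+1}) \ge 0$. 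So the second sum is nonnegative and may simply be dropped.

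Next I would bound the first sum from below by replacing each coefficient $A_\ell(J_{\Us_{\qc}}\yb^{\qc})$ with its minimum $A_\ast(J_{\Us_{\qc}}\yb^{\qc}) = \min_{\ell=1,\dots,N} A_\ell(J_{\Us_{\qc}}\yb^{\qc})$, which is a legitimate lower bound term-by-term because $|v_\ell'|^2 \ge 0$. This yields
\begin{displaymath}
E''_\a(J_{\Us_{\qc}}\yb^{\qc})[\vb,\vb] \ge \eps \sum_{\ell=1}^N A_\ast(J_{\Us_{\qc}}\yb^{\qc}) |v_\ell'|^2 = A_\ast(J_{\Us_{\qc}}\yb^{\qc}) \|\vb'\|_{\ell^2_\eps}^2,
\end{displaymath}
using the definition of the $\ell^2_\eps$-norm of a vector (here applied to $\vb'$, and by $N$-periodicity the sum over one period is exactly $\|\vb'\|_{\ell^2_\eps}^2$). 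This is the claimed inequality.

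Strictly speaking there is one point that deserves a sentence of care rather than being a genuine obstacle: to even write $E''_\a$ in the form above one must first verify the algebraic identity $|v'_\ell + v'_{\ell+1}|^2 = 2|v'_\ell|^2 + 2|v'_{\ell+1}|^2 - \eps^2|v''_\ell|^2$ (a direct computation from $v''_\ell = (v'_{\ell+1}-v'_\ell)/\eps$) and then reindex the resulting sums over one period using $N$-periodicity of $\vb$ to collect the coefficient of $|v'_\ell|^2$ as $A_\ell$ in \eqref{StabilityConstant-1}; all of this is already carried out in the text preceding the lemma, so I would simply invoke it. The only substantive input is the sign of $B_\ell$, which is where the hypothesis $\min_\ell(J_{\Us_{\qc}}y^{\qc})'_\ell \ge r_\ast/2$ and the convex/concave structure of $\phi$ enter; once that is in hand the proof is a two-line estimate. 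Thus the main (and essentially only) nontrivial step is the observation that the strain-gradient coefficients $B_\ell$ are nonnegative under the stated strain condition, and there is no real obstacle beyond bookkeeping.
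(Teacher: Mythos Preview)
Your proposal is correct and follows exactly the same route as the paper's proof: drop the nonnegative strain-gradient sum $\eps\sum_\ell B_\ell|v''_\ell|^2$ using $B_\ell\ge 0$ (which follows from the hypothesis and the concavity of $\phi$ on $[r_\ast,\infty)$, as established just before the lemma), then bound each $A_\ell$ below by $A_\ast$ and recognise the remaining sum as $\|\vb'\|_{\ell^2_\eps}^2$. The paper's own proof is the two-line chain of inequalities you describe.
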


\begin{proof}
If $\min_{\ell} (J_{\Us_{\qc}}y^{\qc})'_{\ell} \ge {r_\ast}/2$, then
\begin{align*}
\qquad  E''_\a(J_{\Us_{\qc}} \yb^{\qc})[\vb,\vb] 
\ge \eps \sum_{\ell=1}^{N} A_\ell(J_{\Us_{\qc}}\yb^{\qc}) |v'_{\ell}|^2
\ge A_{\ast}(J_{\Us_{\qc}}\yb^{\qc}) \eps \sum_{\ell=1}^{N}|v'_{\ell}|^2
= A_{\ast}(J_{\Us_{\qc}}\yb^{\qc}) \| \vb' \|_{\ell^2_\eps}^2.  \qquad \qquad \qquad
\qedhere
\end{align*}
\end{proof}

Before we present the main theorem and its proof in the next section, we
state a useful auxiliary result: a local Lipschitz bound on
$E''_\a$. The proof of this Lipschitz bound is straightforward and is
therefore omitted.
\begin{lemma}
  \label{th:lip_hess}
  Let $\yb, \zb \in \Ys^{\eps}$ such
  that $\min_\ell y_\ell' \geq \mu$ and $\min_\ell z_\ell' \geq \mu$
  for some constant $\mu > 0$, then
 \begin{displaymath}
    \big| \{\Es_\a''(\yb) - \Es_\a''(\zb)\}[\vb, \wb] \big|
    \leq C_{\rm Lip} \| \yb' - \zb' \|_{\ell_\eps^\infty} \| \vb'\|_{\ell^2_\eps}
    \|\wb'\|_{\ell^2_\eps}
    \qquad \forall \vb, \wb \in \Us,
  \end{displaymath}
  where $C_{\rm Lip} = M_3([\mu,+\infty)) + {8 M_3([2\mu,
  +\infty))}$ and $M_i(S) = \max_{\xi \in \Ss} |\phi^i(\Ss)|$.
\end{lemma}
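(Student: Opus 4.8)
The plan is to differentiate the stored energy twice, polarize, and then reduce everything to a mean-value estimate on $\phi''$, term by term. Starting from the displayed formula for $\Es_\a''$, the associated symmetric bilinear form is
\begin{displaymath}
\Es_\a''(\yb)[\vb,\wb] = \eps \sum_{\ell=1}^N \phi''(y_\ell')\,v_\ell' w_\ell' + \eps \sum_{\ell=1}^N \phi''(y_\ell'+y_{\ell+1}')\,(v_\ell'+v_{\ell+1}')(w_\ell'+w_{\ell+1}'),
\end{displaymath}
so that
\begin{align*}
\{\Es_\a''(\yb) - \Es_\a''(\zb)\}[\vb,\wb]
&= \eps \sum_{\ell=1}^N \big[\phi''(y_\ell') - \phi''(z_\ell')\big]\, v_\ell' w_\ell' \\
&\quad + \eps \sum_{\ell=1}^N \big[\phi''(y_\ell'+y_{\ell+1}') - \phi''(z_\ell'+z_{\ell+1}')\big]\,(v_\ell'+v_{\ell+1}')(w_\ell'+w_{\ell+1}').
\end{align*}

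First I would bound the two bracketed increments of $\phi''$. Since $y_\ell', z_\ell' \ge \mu$, the segment joining them lies in $[\mu,+\infty)$, where $\phi'''$ is bounded by $M_3([\mu,+\infty))$ (finite because $\phi \in C^3((0,+\infty))$ and, for a Lennard-Jones type potential, $\phi'''$ decays at infinity); hence by the mean value theorem $|\phi''(y_\ell') - \phi''(z_\ell')| \le M_3([\mu,+\infty))\,|y_\ell' - z_\ell'| \le M_3([\mu,+\infty))\,\|\yb'-\zb'\|_{\ell^\infty_\eps}$. Likewise $y_\ell'+y_{\ell+1}',\, z_\ell'+z_{\ell+1}' \ge 2\mu$ and $|(y_\ell'+y_{\ell+1}')-(z_\ell'+z_{\ell+1}')| \le 2\|\yb'-\zb'\|_{\ell^\infty_\eps}$, so $|\phi''(y_\ell'+y_{\ell+1}') - \phi''(z_\ell'+z_{\ell+1}')| \le 2 M_3([2\mu,+\infty))\,\|\yb'-\zb'\|_{\ell^\infty_\eps}$.

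Second, I would factor the $\ell^\infty_\eps$ norm out of each sum and apply Cauchy--Schwarz in $\ell^2_\eps$. The nearest-neighbour sum is then bounded by $M_3([\mu,+\infty))\,\|\yb'-\zb'\|_{\ell^\infty_\eps}\,\|\vb'\|_{\ell^2_\eps}\|\wb'\|_{\ell^2_\eps}$ directly. For the next-nearest-neighbour sum, Cauchy--Schwarz leaves the factor $\big(\eps\sum_\ell |v_\ell'+v_{\ell+1}'|^2\big)^{1/2}\big(\eps\sum_\ell |w_\ell'+w_{\ell+1}'|^2\big)^{1/2}$; by the triangle inequality in $\ell^2_\eps$ together with the $N$-periodicity of $\vb$ (the shifted sequence $(v_{\ell+1}')_\ell$ has the same $\ell^2_\eps$ norm as $(v_\ell')_\ell$), each factor is at most $2\|\vb'\|_{\ell^2_\eps}$, respectively $2\|\wb'\|_{\ell^2_\eps}$. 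Combining, the next-nearest-neighbour sum is bounded by $2 M_3([2\mu,+\infty))\cdot 4\,\|\yb'-\zb'\|_{\ell^\infty_\eps}\,\|\vb'\|_{\ell^2_\eps}\|\wb'\|_{\ell^2_\eps}$, and adding the two contributions gives exactly $C_{\rm Lip} = M_3([\mu,+\infty)) + 8 M_3([2\mu,+\infty))$.

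There is no genuine obstacle here --- the estimate is a routine mean-value-theorem-plus-Cauchy--Schwarz argument. The only points deserving a word of care are that the line segments between the strains remain inside $[\mu,+\infty)$, respectively $[2\mu,+\infty)$, so that the relevant $M_3$ is a valid bound for $\phi'''$ there, and the bookkeeping of the constant $8$, which arises as the product of the factor $2$ in the increment bound for the next-nearest-neighbour term with the two factors of $2$ produced by the shift-splittings of $\vb'+\vb'(\cdot+1)$ and $\wb'+\wb'(\cdot+1)$.
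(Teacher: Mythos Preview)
Your argument is correct and is exactly the routine computation the paper has in mind; indeed the paper itself omits the proof, stating that it ``is straightforward and is therefore omitted.'' Your polarization of the Hessian, the two mean-value bounds on $\phi''$ over $[\mu,+\infty)$ and $[2\mu,+\infty)$, and the Cauchy--Schwarz/shift argument producing the factor $2\cdot 2\cdot 2 = 8$ in the next-nearest-neighbour term are all sound and recover the stated constant $C_{\rm Lip}$.
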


\section{A Posteriori Error Estimates}
\label{Sec:A_Posteriori_Error}
\subsection{The a posterior error estimates for the deformation gradient}
The error we estimate is $e := y_\a - J_{\Us_{\qc}}y_{\qc}$
in the $\Us^{1,2}$-norm, as for $y_1, y_2 \in \Ys$, $y_1 - y_2 \in \Us$. To avoid
technicalities associated with the nonlinearity of our models, we make
an a priori assumption: we assume the existence of the atomistic and QC
solutions and make a mild requirement on their smoothness and
closeness (cf. \eqref{eq:w1inf_apriori_assmpt}).

\begin{theorem}
  Let $y_{\qc}$ be a solution of the QC problem \eqref{LocalMinimumQC} whose gradients are such
  that $\min_\ell \big(J_{\Us_{\qc}} y^{\qc} \big)'_\ell \geq r_* / 2 \ \forall
  \ell \in \Z$ and $A_*(J_{\Us_{\qc}}\yb^{\qc}) >
  0$, where $A_*$ is defined in the statement of Lemma
  \ref{th:stab_lemma}. Suppose, further, that $y^{\a}$ is a solution
  of the atomistic model \eqref{Eq:LocalMinimaA} such that, for some $\tau
  > 0$,
  \begin{equation}
    \label{eq:w1inf_apriori_assmpt}
    \| (y^{\a} - J_{\Us_{\qc}} y_{\qc})' \|_{L^\infty [0,1]} =  \| (\yb^{\a} - J_{\Us_{\qc}} \yb^{\qc})' \|_{\ell^\infty_\eps}\leq \tau.
  \end{equation}
Then, if $\tau$ is sufficiently small, we have the error estimate
  \begin{equation}
    \label{eq:main_errest}
    \| y^\a - J_{\Us_{\qc}} y_{\qc} \|_{L^2[0,1]} = \| (\yb^{\a} - J_{\Us_{\qc}} \yb^{\qc})'
    \|_{\ell^2_\eps} =\leq {\smfrac{2}{A_*(\yb^{\a})}} \big(
    \mathscr{E}_{\rm store}(y_{\qc})+\mathscr{E}_{\rm ext}(f) \big),
  \end{equation}
where the functional of the residual of the stored energy $\mathscr{E}_{\rm store}(\cdot)$ is defined in
\eqref{ResidualStoredEnergy} and the functional of the approximation error for the external forces
$\mathscr{E}_{\rm ext}(\cdot)$ is defined in \eqref{ResidualExternalForce}.
\end{theorem}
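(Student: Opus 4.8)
The plan is to combine the residual bound (Theorems~\ref{Theo:ResidualStoredEnergy} and \ref{Theo:RisidualExternalForce}) with the a~posteriori stability of Lemma~\ref{th:stab_lemma} via a standard inverse-function / fixed-point argument for energy-minimization problems. Writing $e := y_\a - J_{\Us_{\qc}} y_{\qc} \in \Us$, the starting point is the identity
\begin{displaymath}
\Es_\a'(y_\a)[v] - \<f,v\>_\eps = 0 = R[v] + \big(\Es_\a'(J_{\Us_{\qc}}y_{\qc})[v] - \<f,v\>_\eps\big) - R[v],
\end{displaymath}
which, together with the definition \eqref{definitionTotalResidual} of the residual, gives
\begin{displaymath}
\Es_\a'(y_\a)[v] - \Es_\a'(J_{\Us_{\qc}}y_{\qc})[v] = -R[v] \qquad \forall v \in \Us.
\end{displaymath}
The left-hand side I would write as $\int_0^1 \Es_\a''(J_{\Us_{\qc}}y_{\qc} + t e)[e,v]\,\dd t$ by the fundamental theorem of calculus, so that testing with $v = e$ yields
\begin{displaymath}
\int_0^1 \Es_\a''(J_{\Us_{\qc}}y_{\qc} + t e)[e,e]\,\dd t = -R[e] \le \|R\|_{\Us^{-1,2}} \|e'\|_{L^2[0,1]}.
\end{displaymath}

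Next I would bound the left-hand side from below. By Lemma~\ref{th:lip_hess} (local Lipschitz continuity of $\Es_\a''$), for each $t \in [0,1]$,
\begin{displaymath}
\Es_\a''(J_{\Us_{\qc}}y_{\qc} + t e)[e,e] \ge \Es_\a''(J_{\Us_{\qc}}y_{\qc})[e,e] - C_{\rm Lip}\, t\, \|e'\|_{\ell^\infty_\eps}\, \|e'\|_{\ell^2_\eps}^2,
\end{displaymath}
and under the a~priori assumption \eqref{eq:w1inf_apriori_assmpt} we have $\|e'\|_{\ell^\infty_\eps} \le \tau$. Combining this with the stability estimate of Lemma~\ref{th:stab_lemma}, namely $\Es_\a''(J_{\Us_{\qc}}y_{\qc})[e,e] \ge A_*(J_{\Us_{\qc}}\yb^{\qc}) \|e'\|_{\ell^2_\eps}^2$ (note the hypotheses $\min_\ell (J_{\Us_{\qc}}y^{\qc})'_\ell \ge r_*/2$ and $A_* > 0$ are exactly what make Lemma~\ref{th:stab_lemma} applicable), and integrating in $t$, I obtain
\begin{displaymath}
\big(A_*(J_{\Us_{\qc}}\yb^{\qc}) - \tfrac{1}{2} C_{\rm Lip}\tau\big) \|e'\|_{\ell^2_\eps}^2 \le \|R\|_{\Us^{-1,2}} \|e'\|_{\ell^2_\eps}.
\end{displaymath}
Choosing $\tau$ small enough that $\tfrac{1}{2}C_{\rm Lip}\tau \le \tfrac12 A_*(J_{\Us_{\qc}}\yb^{\qc})$ gives, after dividing by $\|e'\|_{\ell^2_\eps}$,
\begin{displaymath}
\|e'\|_{\ell^2_\eps} \le \frac{2}{A_*(J_{\Us_{\qc}}\yb^{\qc})} \|R\|_{\Us^{-1,2}}.
\end{displaymath}
Finally, since $R[v] = \big(\Es_\a'(J_{\Us_{\qc}}y_{\qc})[v] - \Es_{\qc}'(y_{\qc})[J_\Us v]\big) + \big(\<f, J_\Us v\>_h - \<f,v\>_\eps\big)$ by \eqref{definitionTotalResidual}, the triangle inequality together with Theorems~\ref{Theo:ResidualStoredEnergy} and \ref{Theo:RisidualExternalForce} gives $\|R\|_{\Us^{-1,2}} \le \mathscr{E}_{\rm store}(y_{\qc}) + \mathscr{E}_{\rm ext}(f)$, which yields \eqref{eq:main_errest}; the two stated norm identities ($L^2$-norm of gradient equals $\ell^2_\eps$-norm of discrete gradient) follow from \eqref{Eq:FirstDerivativeL2Identity}.

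The main obstacle I anticipate is the quantitative smallness condition on $\tau$: one must be a little careful that the constant $A_*$ appearing in the final bound is $A_*(\yb^{\a})$ rather than $A_*(J_{\Us_{\qc}}\yb^{\qc})$ (as written in \eqref{eq:main_errest}), so an additional application of Lemma~\ref{th:lip_hess} is needed to show $|A_*(\yb^\a) - A_*(J_{\Us_{\qc}}\yb^{\qc})| \le C\tau$ and then absorb the difference, tightening the required smallness of $\tau$ accordingly. There is also a mild technical point that the argument is implicitly circular in the usual a~posteriori way — one assumes \eqref{eq:w1inf_apriori_assmpt} a~priori rather than deriving it — but the excerpt explicitly grants this assumption, so no inverse-function-theorem bootstrap is required here, and the proof is essentially the chain of inequalities above.
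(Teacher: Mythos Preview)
Your proof is correct and follows essentially the same strategy as the paper: combine the residual bounds of Theorems~\ref{Theo:ResidualStoredEnergy} and~\ref{Theo:RisidualExternalForce} with the coercivity from Lemma~\ref{th:stab_lemma}, perturbed via the Lipschitz estimate of Lemma~\ref{th:lip_hess}. The only cosmetic difference is that the paper invokes the classical mean value theorem to produce a single intermediate point $\thetab\in\conv\{\yb^\a,J_{\Us_{\qc}}\yb^{\qc}\}$ with $\Es_\a''(\thetab)[\eb,\eb]=-R[\eb]$, whereas you use the integral form $\int_0^1\Es_\a''(J_{\Us_{\qc}}y_{\qc}+te)[e,e]\,\dd t$; this buys you a harmless extra factor $\tfrac12$ in front of $C_{\rm Lip}\tau$ but changes nothing structurally. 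Your observation about $A_*(\yb^\a)$ versus $A_*(J_{\Us_{\qc}}\yb^{\qc})$ is apt: the paper's own argument in fact concludes with the latter constant and does not carry out the additional Lipschitz step you suggest, so the statement \eqref{eq:main_errest} as written appears to contain a typo that your remark correctly diagnoses.
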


\begin{proof}
From the mean value theorem we deduce that there exists $\thetab \in
\conv\{ \yb^{\a}, J_{\Us_{\qc}} \yb^{\qc}\}$ such that
\begin{align*}
E''_\a(\thetab)[\eb,\eb] = & E'_\a(\yb^{\a})[\vb] - E'_{\qc}(J_{\Us_{\qc}} \yb^{\qc})[ J_{\Us} \vb]
\nonumber\\
                                 = & \big( \Es'_\a(\yb^{\a})[\vb] -
                                 \Es'_\a(J_{\Us_{\qc}}\yb^{\qc})[J_{\Us} \vb] \big) \nonumber\\
                                   -& \big( \< \fb, \vb\>_\eps -
                                   \<\fb, J_{\Us}\vb\>_h \big). \nonumber\\
\end{align*}
The first group was analyzed in section \ref{SectionResidualStoredEnergy} Theorem \ref{Theo:ResidualStoredEnergy} and the second
group was analyzed in section \ref{SectionResidualExternalForce} Theorem \ref{Theo:RisidualExternalForce}. Inserting these
estimates we arrive at
\begin{equation}
\label{ErrorEstimatShort}
E''_\a(\thetab)[\eb,\eb] \le \big(\mathscr{E}_{\rm store}(y^{\qc}) +
\mathscr{E}_{\rm ext}(f) \big) \| \eb'\|_{\ell^2_\eps}.
\end{equation}

It remains to prove a lower bound on $\Es''_\a (\thetab)
[\eb,\eb]$. From our assumption that $\min (J_{\Us_{\qc}} \yb^{\qc})'_\ell
\ge r_* / 2$, and from \eqref{eq:w1inf_apriori_assmpt} it follows that
\begin{displaymath}
\min_{\ell} \theta'_\ell \ge r_* / 2-\tau.
\end{displaymath}
Assuming that $\tau$ is sufficiently small, e.g., $\tau \le \tau_1:=
\frac{1}{4} \min_\ell (J_{\Us_{\qc}} \yb^{\qc})'_\ell$, we can apply Lemma
\ref{th:lip_hess} to deduce that
\begin{align}
\label{StabilityFinalShort}
\Es''_\a(\thetab)[\eb,\eb] \ge & \Es''_\a(J_{\Us_{\qc}} \yb^{\qc})[\eb,\eb] -
C_{\rm Lip} \| (\thetab - \yb^{\a}) \|_{\ell^\infty_\eps}
\|\eb'\|^2_{\ell^2_\eps} \nonumber \\
\ge & \Es''_\a(J_{\Us_{\qc}} \yb^{\qc})[\eb,\eb] -
C_{\rm Lip} \tau \|\eb'\|^2_{\ell^2_\eps},
\end{align}
where $C_{\rm Lip}$ may depend on $\tau_1$.

We can now apply our stability analysis in Section
\ref{Sec:Stability}. Since $(J_{\Us_{\qc}} \yb^{\qc})_\ell' \geq r_*/2$ for all
  $\ell$, Lemma \ref{th:stab_lemma}  implies that
  \begin{displaymath}
    \Es''_\a(J_{\Us_{\qc}} \yb^{\qc})[\eb,\eb]  \geq A_*(J_{\Us_{\qc}} \yb^{\qc})
    \| \eb'\|_{\ell^2_\eps}^2,
  \end{displaymath}
  which, combined with \eqref{ErrorEstimatShort} and \eqref{StabilityFinalShort}
  , yields
  \begin{displaymath}
    \big(A_*(J_{\Us_{\qc}} \yb^{\qc}) - C_{\rm Lip} \tau \big) \| \eb' \|_{\ell^2_\eps}^2 \leq \Es''_\a(\theta)[ \eb,  \eb]
    \leq \big(\mathscr{E}_{\rm store}(y_{\qc}) +
\mathscr{E}_{\rm ext}(f) \big) \| \eb'\|_{\ell^2_\eps}.
  \end{displaymath}
  Dividing through by $\| \eb' \|_{\ell^2_\eps}$, and assuming
  that $\tau \leq \min(\tau_1, \tau_2)$ where $\tau_2 = A_*(J_{\Us_{\qc}}\yb^{\qc})
  / (2C_{\rm Lip})$, we deduce that
  \begin{displaymath}
    \smfrac{A_*(J_{\Us_{\qc}}\yb^{\qc})}{2} \| \eb'
    \|_{\ell^2_\eps} \leq  \big(\mathscr{E}_{\rm store}(y_{\qc}) +
\mathscr{E}_{\rm ext}(f) \big),
  \end{displaymath}
  which concludes the proof of the a posteriori error estimate for the
  deformation gradient.
\end{proof}

\subsection{The a posterior error estimate for the energy}
Besides the deformation gradient, the energy of the system is another
quantity of interest. In this section, we derive an a posteriori
error estimator for the energy difference between the atomistic model
and the QC approximation, namely,
\begin{equation}
E_\a (y^{\a}) - E_{\qc} (y_{\qc}) .
\label{EnergyDifference}
\end{equation}

To analyze this difference, we decompose \eqref{EnergyDifference} as
\begin{equation}
\big|E_\a(y^{\a}) - E_{\qc} (y_{\qc})\big| = \big|E_\a(y^{\a}) - E_\a(J_{\Us_{\qc}} y_{\qc})\big|
+ \big| E_\a(J_{\Us_{\qc}} y_{\qc}) - E_{\qc} (y_{\qc}) \big|.
\end{equation}
We then analyze the two groups separately.

To analyze the first group, we have the following Lemma.

\begin{lemma}
  Let $y, z \in \Ys$ and $\yb, \zb \in \Ys^\eps$ be their
  vectorizations, such
  that $\min_\ell y_\ell' \geq \mu$ and $\min_\ell z_\ell' \geq \mu$
  for some constant $\mu > 0$, and $y \in \argmin E_\a(\Ys)$. Let $\eb = \yb - \zb$, then
\begin{equation}
\big| E_\a(\yb) - E_\a(\zb) \big| \le C^E_{\rm Lip} \| \eb'\|^2_{\ell^2_\eps},
\end{equation}
where $C^E_{\rm Lip} = \frac{1}{2}M_2([\mu,+\infty)) + {2 M_3([2\mu,
  +\infty))}$, where $M_i(S) = \max_{\xi \in \Ss} |\phi^i(\Ss)|$.
\end{lemma}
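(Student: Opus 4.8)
The plan is to compare the two total energies by a second‑order Taylor expansion about the atomistic minimiser $\yb$: the key structural fact is that, because $y$ minimises $E_\a$, the energy gap is purely quadratic in the error $\eb=\yb-\zb$. Concretely, the first‑order optimality of $y$ removes the linear term, and the remaining quadratic term is then controlled directly by the explicit atomistic Hessian recorded in Section~\ref{Sec:Stability}.

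I would first collect the ingredients. Since $y$ and $z$ have the same macroscopic gradient $F$, their difference $e=y-z$ lies in the linear space $\Us$, so $\eb$ is an admissible variation in $\Us^\eps$. Since $\min_\ell y_\ell'\ge\mu>0$ we have $y'(x)>0$ on $[0,1]$, so the first‑order optimality condition \eqref{VariationalA} holds and in particular $E_\a'(\yb)[\eb]=0$. Also $E_\a''=\Es_\a''$, since the external energy $\<f,u\>_\eps$ is linear in $u$ and has no second variation. Arguing as in the proof of the a posteriori error estimate for the deformation gradient — which is legitimate because $\phi\in C^3((0,+\infty))$ and every point of $\conv\{\yb,\zb\}$ has gradient $\ge\mu>0$, so $E_\a$ is twice continuously differentiable along that segment — the mean value theorem gives some $\thetab\in\conv\{\yb,\zb\}$ with
\[
E_\a(\zb)-E_\a(\yb)=E_\a'(\yb)[\zb-\yb]+\frac{1}{2}E_\a''(\thetab)[\zb-\yb,\zb-\yb]=\frac{1}{2}\Es_\a''(\thetab)[\eb,\eb],
\]
using $\zb-\yb=-\eb$ and the vanishing of the linear term.

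It then remains to bound $\big|\Es_\a''(\thetab)[\eb,\eb]\big|$. Inserting the Hessian formula from Section~\ref{Sec:Stability},
\[
\Es_\a''(\thetab)[\eb,\eb]=\eps\sum_{\ell=1}^N\phi''(\theta_\ell')|e_\ell'|^2+\eps\sum_{\ell=1}^N\phi''(\theta_\ell'+\theta_{\ell+1}')|e_\ell'+e_{\ell+1}'|^2,
\]
I would observe that each $\theta_\ell'$ is a convex combination of $y_\ell'$ and $z_\ell'$, so $\theta_\ell'\ge\mu$ and $\theta_\ell'+\theta_{\ell+1}'\ge 2\mu$; hence the coefficients in the two sums are bounded in absolute value by the suprema of $|\phi''|$ over $[\mu,+\infty)$ and over $[2\mu,+\infty)$ respectively. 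Bounding the first sum directly, and the second one using $|a+b|^2\le 2|a|^2+2|b|^2$ together with the $N$-periodicity of $\eb$ to reindex $\sum_\ell|e_{\ell+1}'|^2=\sum_\ell|e_\ell'|^2$, produces a bound $\big|\Es_\a''(\thetab)[\eb,\eb]\big|\le 2C^E_{\rm Lip}\|\eb'\|_{\ell^2_\eps}^2$ with the stated constant $C^E_{\rm Lip}$. Dividing by $2$ and using $\|e'\|_{L^2[0,1]}=\|\eb'\|_{\ell^2_\eps}$ (identity \eqref{Eq:FirstDerivativeL2Identity}) finishes the proof.

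The only step that really needs care is the vanishing of the linear term $E_\a'(\yb)[\zb-\yb]$: it relies simultaneously on $e=y-z\in\Us$ being an admissible test function and on the first‑order optimality of the minimiser $y$, which itself uses the positivity hypothesis $\min_\ell y_\ell'\ge\mu>0$ (needed for $\Es_\a$ to possess a variational derivative at $y$). Everything else is routine reindexing and Cauchy--Schwarz‑type estimation, so I do not anticipate any serious obstacle; the whole point of the minimisation hypothesis and of the lower gradient bound $\mu$ is precisely to make the argument clean and to keep the relevant arguments of $\phi''$ inside $[\mu,+\infty)$ and $[2\mu,+\infty)$.
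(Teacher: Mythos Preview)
Your proposal is correct and follows essentially the same route as the paper: use first-order optimality of $y$ to kill the linear term, then bound the second-order remainder via the explicit atomistic Hessian and the elementary inequality $|a+b|^2\le 2|a|^2+2|b|^2$. The only cosmetic difference is that the paper expands each $\phi$ term separately (obtaining independent intermediate points $\xi_\ell^1,\xi_\ell^2$ for the nearest- and next-nearest-neighbour contributions), whereas you apply the Taylor/mean-value theorem to $E_\a$ globally and evaluate the whole Hessian at a single $\thetab\in\conv\{\yb,\zb\}$; both organizations lead to the same bound with the same constant.
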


\begin{proof}
We first rewrite the difference of the total energy as the summation
of the differences of the stored energy and that of the external energy:
\begin{displaymath}
E_\a(\yb) - E_\a(\zb) = \big(\Es_\a(\yb) - \Es_\a(\zb)\big) - \big(\<\fb,\zb\>_\eps -
\<\fb,\yb\>_\eps \big)
\end{displaymath}
For the difference of the stored energy, we have
\begin{align*}
\Es_\a(\yb) - \Es_\a(\zb) = & \eps\sum_{\ell=1}^N\big[ \phi(y'_\ell) -
\phi(z'_\ell) \big] + \eps \sum_{\ell=1}^N \big[
\phi(y'_\ell+y'_{\ell+1}) - \phi(z'_\ell+z'_{\ell+1}) \big]
\nonumber\\
                             = &\eps \sum_{\ell=1}^N \big[
                             \phi'(y'_\ell) e'_\ell -
                             \frac{1}{2}\phi''(\xi^1_\ell){e'_\ell}^2\big]
                             \nonumber\\
                             +&\eps \sum_{\ell=1}^N \big[
                             \phi'(y'_\ell+y'_{\ell+1})
                             (e'_\ell+e'_{\ell+1}) -
                             \frac{1}{2}\phi''(\xi^2_\ell)(e'_\ell+e'_{\ell+1})^2
                             \big]\nonumber \\
                             =& \Es_\a'(\yb)[\eb] - \frac{1}{2} \eps \sum_{\ell=1}^N
                             \phi''(\xi^1_\ell) {e'_\ell}^2 - \frac{1}{2}\eps \sum_{\ell=1}^N \phi''(\xi^2_\ell) (e'_\ell+e'_{\ell+1})^2,
\end{align*}
where $\xi^1_\ell \in \conv\{y'_\ell, z'_\ell\}$ and $\xi^2_\ell \in \conv\{y'_\ell+y'_{\ell+1}, z'_\ell+z'_{\ell+1}\}$.

For the difference of the energy caused by the external forces, we have
\begin{equation*}
\<\fb,\zb\>_\eps - \<\fb,\yb\>_\eps = -\<\fb,\eb\>_\eps = -\Es_\a'(\yb)[\eb],
\end{equation*}
by the first optimality condition of $y \in \argmin E_\a(\Ys)$. It is
then easy to obtain the estimate stated in the Lemma by using
Cauchy-Schwaz inequality to the non-local term. 
\end{proof}

\begin{lemma}
For $y_h \in \Ys_{\qc}$ and $y'_h(x) > 0$, we have
\begin{equation}
\big| E_\a(J_{\Us_{\qc}} y_h) - E_{\qc}(y_h) \big| \le \sum_{k \in \Ks_c}
{\eta_E^e}_k + {\eta_E^f}_k,
\end{equation}
where 
\begin{equation}
\eta_k^e = \frac{1}{2} \sum_{j = -1}^{1} [[\phi]]_{\ell_{k-1}+j} + \frac{1}{2}\sum_{j = -1}^{1} [[\phi
]]_{\ell_{k}+j},
\end{equation}
if $k \in \Ks'_c$,
\begin{equation}
\eta_k^e =  \sum_{j = -1}^1  [[ \phi
]]_{\ell_{La_i}+j} + \frac{1}{2} \sum_{j=-1}^1[[ \phi
]]_{\ell_{k-1}+j},
\end{equation}
if $k = L_{a_i}$ for some $i \in \{1,2,\ldots,M\}$, and
\begin{equation}
\eta_k^e =  \sum_{j = -1}^1  [[ \phi
]]_{\ell_{Ra_i}+j} + \frac{1}{2} \sum_{j=-1}^1[[ \phi
]]_{\ell_{k}+j},
\end{equation}
if $k = R_{a_i}$ for some $i \in \{1,2,\ldots,M\}$. $[[\phi]]_{\ell_k}$'s and ${\eta_E^f}_k$'s will be defined in the proof.
\end{lemma}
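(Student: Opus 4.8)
The plan is to split the energy mismatch into a stored-energy part and an external-force part,
\begin{equation*}
E_\a(J_{\Us_{\qc}}y_h) - E_{\qc}(y_h) = \big[\Es_\a(J_{\Us_{\qc}}y_h) - \Es_{\qc}(y_h)\big] + \big[\<f,u_h\>_h - \<f,J_{\Us_{\qc}}u_h\>_\eps\big],
\end{equation*}
and to bound the two brackets separately: the first yields the quantities $\eta_k^e$ of the statement and is handled by essentially the same bond-by-bond analysis as Theorem~\ref{Theo:ResidualStoredEnergy}, only with $\phi$ in place of $\phi'$ and with no test function, while the second yields the quantities ${\eta_E^f}_k$ and is handled with the quadrature-error machinery of Section~\ref{SectionResidualExternalForce}. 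The claimed inequality then follows by the triangle inequality.

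For the stored-energy bracket I would first use that $\Es_\a$ depends on a deformation only through its consecutive differences, so that $\Es_\a(J_{\Us_{\qc}}y_h) = \Es_\a(I_\eps y_h) = \eps\sum_{b\in\Bs}\phi(r_bD_by_h)$; this is the same shift-invariance exploited in the proof of Theorem~\ref{Theo:ResidualStoredEnergy}. Hence
\begin{equation*}
\Es_\a(J_{\Us_{\qc}}y_h) - \Es_{\qc}(y_h) = \sum_{b\in\Bs}\Big[\eps\,\phi(r_bD_by_h) - \smfrac{|b\cap\Omega_\a|}{r_b}\,\phi(r_bD_{b\cap\Omega_\a}y_h) - \smfrac{1}{r_b}\int_{b\cap\Omega_c}\phi(\nabla_{r_b}y_h)\dx\Big].
\end{equation*}
As in Theorem~\ref{Theo:ResidualStoredEnergy}, a bond $b\subset\Omega_\a$ contributes zero (there $|b\cap\Omega_\a|=r_b\eps$ and $D_b=D_{b\cap\Omega_\a}$), and so does a bond $b\subset\Omega_c\cap T_k$ (there $\nabla_{r_b}y_h$ is constant on $T_k$ and equals $r_bD_by_h$, and $|b\cap\Omega_c|=r_b\eps$); only the bonds straddling an atomistic/continuum interface or a continuum element interface survive. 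For each such bond one expresses $D_by_h$, $D_{b\cap\Omega_\a}y_h$ and $\nabla_{r_b}y_h$ in terms of $y_h'|_{T_{k-1}}$, $y_h'|_{T_k}$, $y_h'|_{T_{k+1}}$ and the parameter $\theta_k$ of \eqref{Def:lkandthetak} — the elementary computation already carried out in the Appendix for Theorem~\ref{Theo:ResidualStoredEnergy}, but now collecting $\phi$-values rather than $\phi'$-values and carrying a residual factor $\eps$. Collecting the surviving per-bond quantities into numbers $[[\phi]]_\ell$ attached to the three $\eps$-strain indices adjacent to each interface/continuum node $x^h_k$ (indexed $\ell-1,\ell,\ell+1$ with $\ell=\ell_k$, matching the range $j=-1,0,1$ in the statement), distributing each node's contribution between the element(s) abutting it — the weight $\smfrac{1}{2}$ when the node is interior to $\Omega_c$, the full weight at an a/c interface — and applying the triangle inequality produces the stated $\eta_k^e$.

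For the external-force bracket, since $f\in\Us$ one has $\eps\sum_\ell f(\ell\eps)=\int_0^1 f\dx=0$, whence $\<f,J_{\Us_{\qc}}u_h\>_\eps=\<f,I_\eps u_h\>_\eps=\eps\sum_\ell f(\ell\eps)u_h(\ell\eps)$, and therefore
\begin{equation*}
\<f,u_h\>_h - \<f,J_{\Us_{\qc}}u_h\>_\eps = \int_0^1 I_h(fu_h)\dx - \int_0^1 I_\eps(fu_h)\dx
\end{equation*}
is the difference of the trapezoidal quadratures of $fu_h$ on $\Ts^h$ and on $\Ts^\eps$. Because $\Ts^h$ and $\Ts^\eps$ coincide on $\Omega_\a$, this difference localizes to the continuum elements $T_k$, $k\in\Ks_c$. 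Introducing the common refinement $\Ts^r$ of $\Ts^\eps$ and $\Ts^h$ as in Section~\ref{SectionResidualExternalForce}, I would insert $\<f,u_h\>_r$ between the two quadratures and bound the two resulting differences with the tools already available — the interpolation/quadrature-error bound of Theorem~\ref{Theo:BoundOnIntErr}, the discrete Friedrichs and Poincar\'e inequalities of Appendix~\ref{AppendixB}, and Lemma~\ref{lemmaA1} — but now with the computed QC displacement $u_h$ itself playing the role that the generic test function $v$ plays in Lemmas~\ref{Lemma:ResGraExEn-2} and \ref{Lemma:ResGraExEn-3}. This gives the element-local ${\eta_E^f}_k$, each depending on the local data of $f$ and $u_h$ on $T_k$ and on the neighbouring $\Ts^r$-cells; their precise form is what is recorded in the proof.

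The main obstacle is the crossing-bond computation in the stored-energy part: several configurations must be treated (a next-nearest-neighbour bond may straddle an a/c interface, a continuum/continuum element interface, or sit with its two endpoints in non-adjacent elements), each generating several $\phi$-difference terms, after which one must carry out the bookkeeping that re-expresses all of them through the single-index quantities $[[\phi]]_\ell$ and redistributes them to the continuum elements — precisely the tedious part deferred to the Appendix in Theorem~\ref{Theo:ResidualStoredEnergy}, now repeated at the level of $\phi$. The external-force part is comparatively routine; the only point of care is that one works with the fixed function $u_h$ rather than an arbitrary test function, so the bound is obtained directly instead of through a duality argument.
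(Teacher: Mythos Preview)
Your decomposition and your treatment of the stored-energy bracket match the paper's proof exactly: the shift-invariance argument, the bond-by-bond reduction to interface-crossing bonds, and the redistribution into the $[[\phi]]_\ell$ quantities are precisely what the paper does (with the detailed case analysis deferred to the Appendix).

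For the external-force bracket you take a different, heavier route than the paper. After observing (as you do) that $\<f,J_{\Us_{\qc}}u_h\>_\eps=\<f,u_h\>_\eps$, the paper does \emph{not} introduce the common refinement $\Ts^r$ or invoke any of the quadrature-error machinery of Section~\ref{SectionResidualExternalForce}. Instead it simply writes the difference $\<f,u_h\>_\eps-\<f,u_h\>_h$ element by element as the explicit difference of the two trapezoidal sums of the known function $fu_h$, and defines ${\eta_E^f}_k$ to be the absolute value of the local contribution on $T_k$. This is possible precisely because $u_h$ is a fixed, computed function rather than an arbitrary test function --- a point you note yourself --- so there is no need to pass through discrete Sobolev inequalities or interpolation-error bounds; the quantity is already a posteriori computable as it stands. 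Your approach would also yield valid (and perhaps more structured) ${\eta_E^f}_k$'s, but at the cost of unnecessary intermediate estimates; the paper's version is simply ``write it down and take absolute values.''
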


\begin{proof}
We first decompose the energy difference to two parts:
\begin{displaymath}
E_\a(J_{\Us_{\qc}} y_h) - E_{\qc}(y_h) = \big(\Es_\a(J_{\Us_{\qc}} y_h) -
\Es_{\qc}(y_h)\big) - \big(\<f, J_{\Us_{\qc}}y_h\>_\eps -
\<f_h,y_h\>_h \big).
\end{displaymath}

We first analyze the energy difference of the stored energy. Since $r_b D_bJ_{\Us_{\qc}} y_h = r_b D_b y_h$, we have
\begin{equation}
\Es_\a(J_{\Us_{\qc}} y_h) =  \sum_{b \in \Bs} a_b(J_{\Us_{\qc}} y_h)  
= \sum_{b \in \Bs} \eps \phi(r_b D_bJ_{\Us_{\qc}} y_h) = \sum_{b \in \Bs} \eps \phi(r_b D_b y_h),
\end{equation}
and
\begin{equation}
\Es_{\qc}(y_h) = \sum_{b \in \Bs} \big[ a_b(y_h) + c_b(y_h) \big]
= \sum_{b \in \Bs}\bigg[ \frac{|b \cap \Omega_\a|}{r_b} \phi\big(r_b D_{b \cap
  \Omega_\a} y \big) + \frac{1}{r_b} \int_{b\cap \Omega_c}
\phi(\nabla_{r_b}y(x))\dx \bigg].
\end{equation}

We analyze the energy difference bond by bond,

If $b \subset \Omega_\a $, then 
\begin{displaymath}
a_b(y_h) + c_b(y_h) = a_b(y_h) = \eps \phi(r_b D_b y_h) =  a_b(J_{\Us_{\qc}} y_h),
\end{displaymath}
and the energy difference in this bond is thus $0$.

If $b \subset \Omega_c \cap T_k$ for some $k \in \Ks_c$, then $|b\cap\Omega_\a| = 0$ and
\begin{align}
\label{Eq:EnergyErrorBondinC}
a_b(J_{\Us_{\qc}} y_h) - \big[ a_b(y_h) + c_b(y_h) \big] =& a_b(J_{\Us_{\qc}} y_h) - c_b(y_h)\nonumber\\
= & \eps
\phi(r_b D_b y_h) - \frac{1}{r_b} \int_{b\cap \Omega_c}
\phi(\nabla_{r_b}y_h(x))\dx \nonumber \\
=& \frac{1}{r_b} \int_b \big[ \phi(r_b D_b y_h) -
\phi(\nabla_{r_b}y_h(x)) \big] \dx.
\end{align}
Since $y_h$ is affine on $T_k$, $\nabla_{r_b}y(x))=r_b D_b y_h$ and subsequently, \eqref{Eq:EnergyErrorBondinC} is $0$.

We are left with the interaction bonds crossing the atomistic-continuum interface and the boundaries of the elements in the continuum region. 
Again because of its tediousness, we leave the detail of this analysis to the Appendix and only give the results here:
\begin{align}
\Es_\a(J_{\Us_{\qc}} y_h) - \Es_{\qc}(y_h) = \sum_{i = 1}^M \sum_{j = -1}^1  [[ \phi
]]_{\ell_{La_i}+j} + \sum_{i = 1}^M \sum_{j = -1}^1  [[ \phi
]]_{\ell_{Ra_i}+j} + \sum_{k \in \Ks'_c} \sum_{j=-1}^1[[ \phi
]]_{\ell_{k}+j}.
\end{align}
For $k = L_{a_i}$ where $i \in \{1,2,\ldots,M\}$, we have
\begin{align}
[[\phi]]_{\ell_{k}} =&\eps \big\{ \phi( (1-\theta_k) y'_h|_{T_{k+1}} + \theta_k
y'_h|_{T_k}) - (1-\theta_k) \phi( y_h'|_{T_{k+1}}) -\theta_k\phi( y_h'|_{T_{k}})\big\},
\end{align}
\begin{align}
[[\phi]]_{\ell_{k-1}} =2\eps \big\{ \phi( (1-\theta_k) y_h'|_{T_{k+1}} + (1+\theta_k)\theta_k y_h'|_{T_k}) - (1-\theta_k) \phi(2y_h'|_{T_{k+1}}) - (1+\theta_k)\phi(2y_h'|_{T_{k}}) \big\}.
\end{align}
and
\begin{align}
[[\phi]]_{\ell_{k+1}} =&2\eps \big\{ \phi( y_h'|_{T_{k+2}}+\theta_k y_h'|_{T_k} +(1-\theta_k) y_h'|_{T_{k+1}} ) \nonumber \\
&-(2-\theta_k) \phi\big( \frac{2}{2-\theta_k}y_h'|_{T_{k+2}} + \frac{2(1-\theta_k)}{2-\theta_k} y_h'|_{T_{k+1}}\big) - \theta_k\phi(2y_h'|_{T_{k}}) \big\}.
\end{align}

For $k = R_{a_i}$ where $i \in \{1,2,\ldots,M\}$, we have
\begin{align}
[[\phi]]_{\ell_{k}} = \eps \big\{ \phi( (1-\theta_k) y'_h|_{T_{k+1}} + \theta_k
y'_h|_{T_k}) - \theta_k\phi( y_h'|_{T_{k}}) -(1-\theta_k) \phi( y_h'|_{T_{k+1}}) \big\},
\end{align}
\begin{align}
[[\phi]]_{\ell_{k-1}} = &2\eps \big\{ \phi( (1-\theta_k) y_h'|_{T_{k+1}} + \theta_k y_h'|_{T_k} +
y_h'|_{T_{k-1}}) \nonumber\\
&-(1-\theta_k)\phi(2y_h'|_{T_{k+1}}) -(1+\theta_k) \phi\big( \frac{2\theta_k}{1+\theta_k} y_h'|_{T_k} +
\frac{2}{1+\theta_k}y_h'|_{T_{k-1}}\big) \big\},
\end{align}
and 
\begin{align}
[[\phi]]_{\ell_{k+1}} = 2\eps \big\{ \phi( (2-\theta_k)y_h'|_{T_{k+1}} +
\theta_ky_h'|_{T_k}) - (2-\theta_k)\phi(2y_h'|_{T_{k+1}})- \theta_k \phi\big( 2y_h'|_{T_k} \big) \big\}.
\end{align}
For $k \in \Ks'_c$, we have
\begin{align}
[[\phi]]_{\ell_k} = \eps \big\{ \phi((1-\theta_k) y_h'|_{T_{k+1}} + \theta_k
y_h'|_{T_k}) - (1-\theta_k)\phi(y_h'|_{T_{k+1}}) -\theta_k \phi(y_h'|_{T_k}) \big\},
\end{align}
\begin{align}
[[\phi]]_{\ell_k-1} = \frac{1}{2}\eps \big\{ 2\phi((1-\theta_k) y_h'|_{T_{k+1}} +
(1+\theta_k) y_h'|_{T_{k}}) - (1-\theta_k)
\phi(2y_h'|_{T_{k+1}})- (1+\theta_k) \phi(2y_h'|_{T_k})  \big\},
\end{align}
and
\begin{align}
[[\phi]]_{\ell_k+1} = \frac{1}{2}\eps \big\{ 2\phi((2-\theta_k) y_h'|_{T_{k+1}} +
\theta_k y_h'|_{T_{k}}) - \theta_k
\phi(2y_h'|_{T_{k+1}}) - (2-\theta_k) \phi(2y_h'|_{T_k})  \big\}.
\end{align}

We then analyze the energy difference caused by the external
forces. The energy difference is given by
\begin{displaymath}
\<f, J_{\Us_{\qc}}u_h\>_\eps - \<f,u_h\>_h  = \<f, u_h\>_\eps - \<f,u_h\>_h,
\end{displaymath}
since $J_{\Us_{\qc}} u_h = u_h+C$ for some contant $C$ and
$\<f,C\>_\eps = 0 \ \forall C$. We decompose this energy difference to
each element and write it as
\begin{equation}
\big|\<f, u_h\>_\eps - \<f,u_h\>_h \big|\le \sum_{k=1}^{K} {\eta_E^f}_k, 
\end{equation}
where
\begin{align}
{\eta_E^f}_k = &\bigg|(1-\theta_k)\frac{1}{2}\eps(f_{\ell_{k-1}}u_{\ell_{k-1}} +
  f_{\ell_{k-1}+1}u_{\ell_{k-1}+1}) 
  + \frac{1}{2} \sum_{\ell = \ell_{k-1}+2}^{\ell_{k}-1} \eps (f_\ell u_\ell + f_{\ell+1} u_{\ell+1})
  \nonumber \\ 
&+ \theta
  \frac{1}{2} \eps (f_{\ell_{k}} u_{\ell_{k}} + f_{\ell_{k}+1}
  u_{\ell_{k}+1}) \big\} -\sum_{k=1}^K \frac{1}{2}(x_{k}-x_{k-1}) \big[ f(x_{k-1})y(x_{k-1}) +
f(x_{k})y(x_{k}) \big] \bigg|,
\end{align}
where $f_\ell = f(\ell\eps)$ and $u_\ell = u(\ell\eps)$.
\end{proof}

\section{Numerical Experiments}
\label{Numerics}
In this section, we present numerical experiments to illustrate our
analysis. Throughout this section we fix $F = 1$,
$N = 8193$, and let $\phi$ be the Morse potential
\begin{equation*}
  \phi(r) = \exp(-2\alpha(r-1))-2\exp(-\alpha(r-1)),
\end{equation*}
with the parameter $\alpha = 5$. 

For our benchmark problem, we defined the external force $\fb$ to be
\begin{align*}
f_\ell = 
\left\{
\begin{array}{rl}
-0.1\big\{1-\frac{|\ell-\frac{N-1}{2}|}{\frac{N-1}{2}}\big\}\frac{N}{|\ell-\frac{N-1}{2}-0.5|},
&\text{for $\ell \le \frac{N-1}{2}$},\\
 0.1\big\{1-\frac{\ell-\frac{N-1}{2}-1}{\frac{N-1}{2}}\big\}\frac{N}{|\ell-\frac{N-1}{2}-0.5|},
 &\text{for $\ell \ge \frac{N-1}{2}+1$.}
\end{array} \right.
\end{align*}

We briefly explain the meaning of the external force. On each atom, the external
force is a product of three components. The third component, namely
$\frac{N}{|\ell-\frac{N-1}{2}-0.5|}$ is essentially
$\frac{1}{r_\ell}$ where $r_\ell$ is the distance between an atom and
the center of this atomistic chain located at
$\frac{N-1}{2}+0.5$. This non-linear force will create a defect
in the middle of the chain but affect little in the far field. The
second component, namely
$1-\frac{|\ell-\frac{N-1}{2}|}{\frac{N-1}{2}}$, adds a decay
of the first component and in particular, it is $0$ when $\ell =
N$, which prevents the 'kink' of the force on the boundary due to a
rapid change of the sign of the force that will leads to non-smooth
deformation gradient that should be contained in the atomistic region. The first component, which is the constant
$0.1$, is to rescale the force so that the solution of this problem is stable.

We solve for the atomistic problem and consider the solution to be the accurate solution. We then solve for the QC
problem on different meshes generated by the mesh refinement schemes. 

We show two relative errors against the number of degrees of
freedom. The first one is the error of the deformation
gradient in $L_2$-norm over the $L_2$-norm of the difference between the deformation gradient
of the atomistic solution and the homogeneous state, which is defined by 
\begin{equation}
\label{RelativeErrorDeformation}
e_{deformation} := \frac{\|
  y_{\qc}'-y_\a'\|_{L^2[0,1]}}{\|y_\a'- Fx \|_{L^2[0,1]}}.
\end{equation}
 The second relative error is the absolute value of the energy
 difference of the atomistic solution and the QC solution over the
 absolute value of the energy change of the atomistic solution from the homogeneous state, which is defined by
\begin{equation}
\label{RelativeErrorEnergy}
e_{energy} := \frac{|E_\a(y_\a) - E_{\qc}(y_{\qc})|}{|E_\a(y_\a) - E_\a(Fx)|}.
\end{equation}

Before we present the plots of the errors, we first introduce the mesh
generating schemes.

\subsection{Mesh Construction}
To avoid unnecessary technical difficulty in the mesh refinement
algorithm, we assume that the defect core is already captured in the middle of the chain. There are three mesh generating schemes we use. 

The first mesh generating scheme is derived in
Section 7.1 of \cite{OrtShap2010b} using calculus of
variations. From this analysis, we get that the (quasi-)optimal mesh size in
the continuum region, with the restriction that the atomistic region is symmetric and has
$K$ atoms on each side, is given by 
\begin{equation}
\label{OptimalMesh}
h(r) = \big(\frac{f(K\eps)}{f(r)}\frac{r}{K\eps}\big)^{\frac{2}{3}}.
\end{equation}
Since the mesh size can not change continuously and we restrict the
smallest mesh size in the continuum region to be $2\eps$, we use the
following algorithm to generate this mesh (we only list the case on the right hand side of the atomistic region):

\begin{algorithm}
\label{Algo:CHMesh}
\begin{enumerate}
\item Set atom $\frac{N-1}{2} + 1$ to be the middle of the atomistic region.
\item Choose $K$ so that there are $K$ atoms on each side of the
  atomistic region.
\item Choose $h$ to be $2\eps$ for every element on the right hand side
  of the atomistic region until $h(r) > 2\eps$, where $r$ is the
  distance between the right boundary of the previous element and the middle of the atomistic region.
\item Choose $h$ according to \eqref{OptimalMesh} until the right
  boundary of the newly created element is out of the right limit of
  the chain.
\end{enumerate}
\end{algorithm}

The second mesh generating scheme is essentially a mesh refinement
process according to the error estimator with respect to the
deformation gradient according to Lemma \ref{Lemma:ResGraExEn-1},
Lemma \ref{Lemma:ResGraExEn-2} and Lemma \ref{Lemma:ResGraExEn-3}. The mesh
refinement algorithm is stated as follows:

\begin{algorithm}
\label{Algo:EDMesh}
\begin{enumerate}
\item Set atom $\frac{N-1}{2} + 1$ to be the middle of the atomistic region.
\item Choose $5$ atoms on each side of the atomistic region.
\item Divide the left and the right part of the continuum region into
  two equally large element
\item Compute the QC solution on this mesh and then compute the squared error indicator of each element $\eta_i$ and
  sort these indicators according to its value.
\item Bisect the first $M$ sorted elements such that
\begin{equation}
\sum_{i = 1}^{M-1} \eta^2_{i} \le 0.5\eta^2  \text{ and }  \sum_{i = 1}^{M} \eta^2_{i} \ge 0.5\eta^2,
\end{equation}
where 
$\eta_i$ is the error estimator of each element defined by
\begin{equation}
\eta^{deformation}_i = \big[(\eta^e_i)^2 + (\eta^f_i)^2\big])^{\frac{1}{2}} \big/ \smfrac{A_*(J_{\Us_{\qc}}\yb^{\qc})}{2}.
\end{equation}
If the element is
near the atomistic region, merge the element into the atomistic region.
\item If the resulting mesh reaches the maximal number of degrees of
  freedom, stop the process, else, go to Step 4.
\end{enumerate}
\end{algorithm}

The third mesh generating scheme is the mesh refinement
process according to the error estimator with respect to the
energy which is defined by
\begin{equation}
\eta^{energy}_i = C^E_{Lip}\big(\eta^{deformation}_i\big)^2 +
{\eta^e_E}_k + {\eta^f_E}_k ,
\end{equation}
for each element and the refinement algorithm is exactly the same.

In short, the first and second mesh generating schemes tend to
minimize the error in the deformation gradient and the third one tends
to minimize the error in the total energy.

\subsection{Numerical Results}
We compare the relative errors defined in
\ref{RelativeErrorDeformation} and \ref{RelativeErrorEnergy}. We plot
the relative errors against the number of degrees of freedom with
respect to the meshes generated.

\begin{figure}[h]

  \begin{minipage}{10.5cm}
    \begin{center}
      \includegraphics[width=10.5cm, bb = 80 220 600 600]{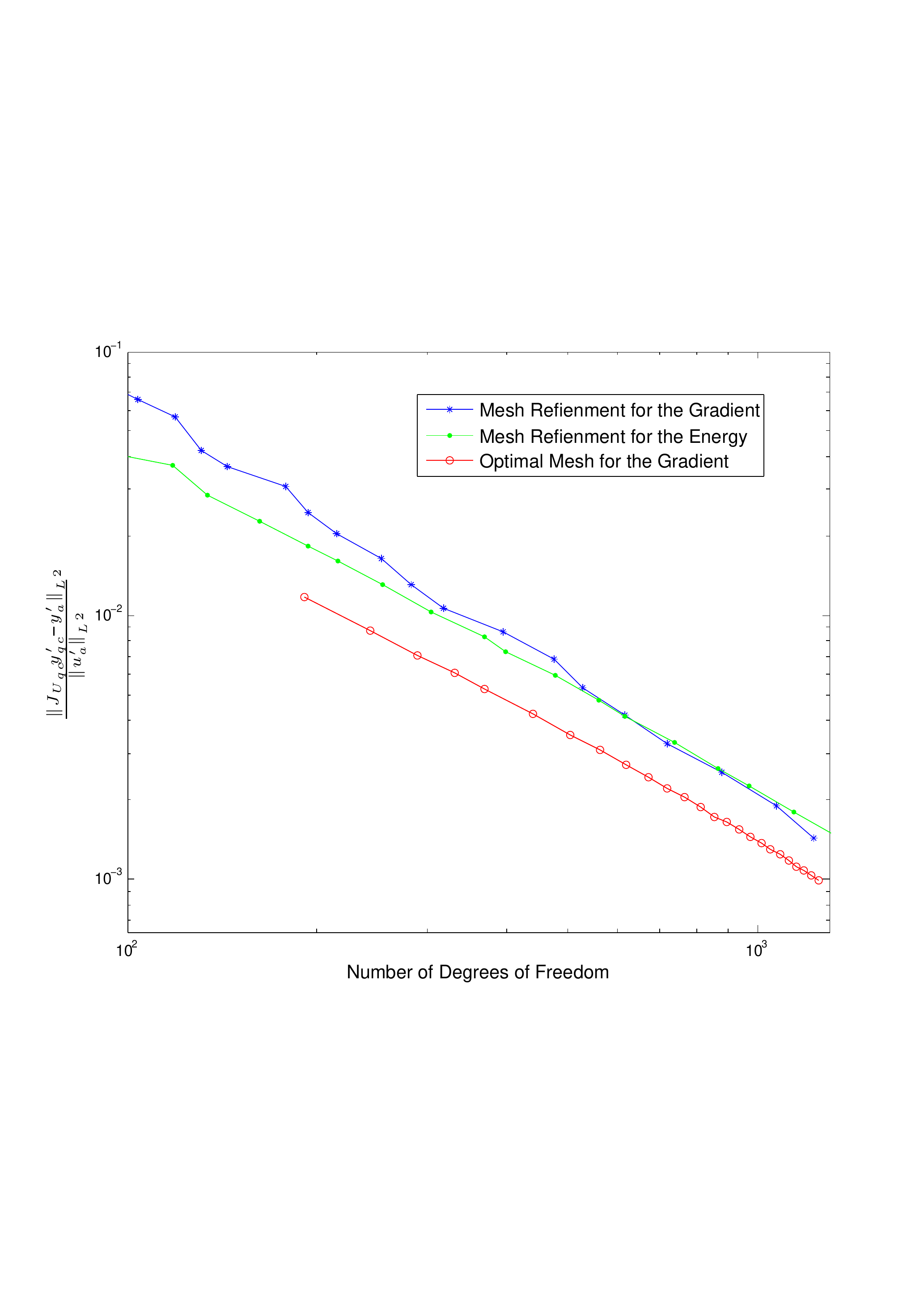}  
      \caption{Relative Error of the Gradient} 
      \label{Fig:Relative_Error_in_Gradient}
    \end{center}
  \end{minipage}

\end{figure}

\begin{figure}[h]
  \begin{minipage}{10.5cm}
    \begin{center}
      \includegraphics[width=10.5cm, bb = 80 220 600 600]{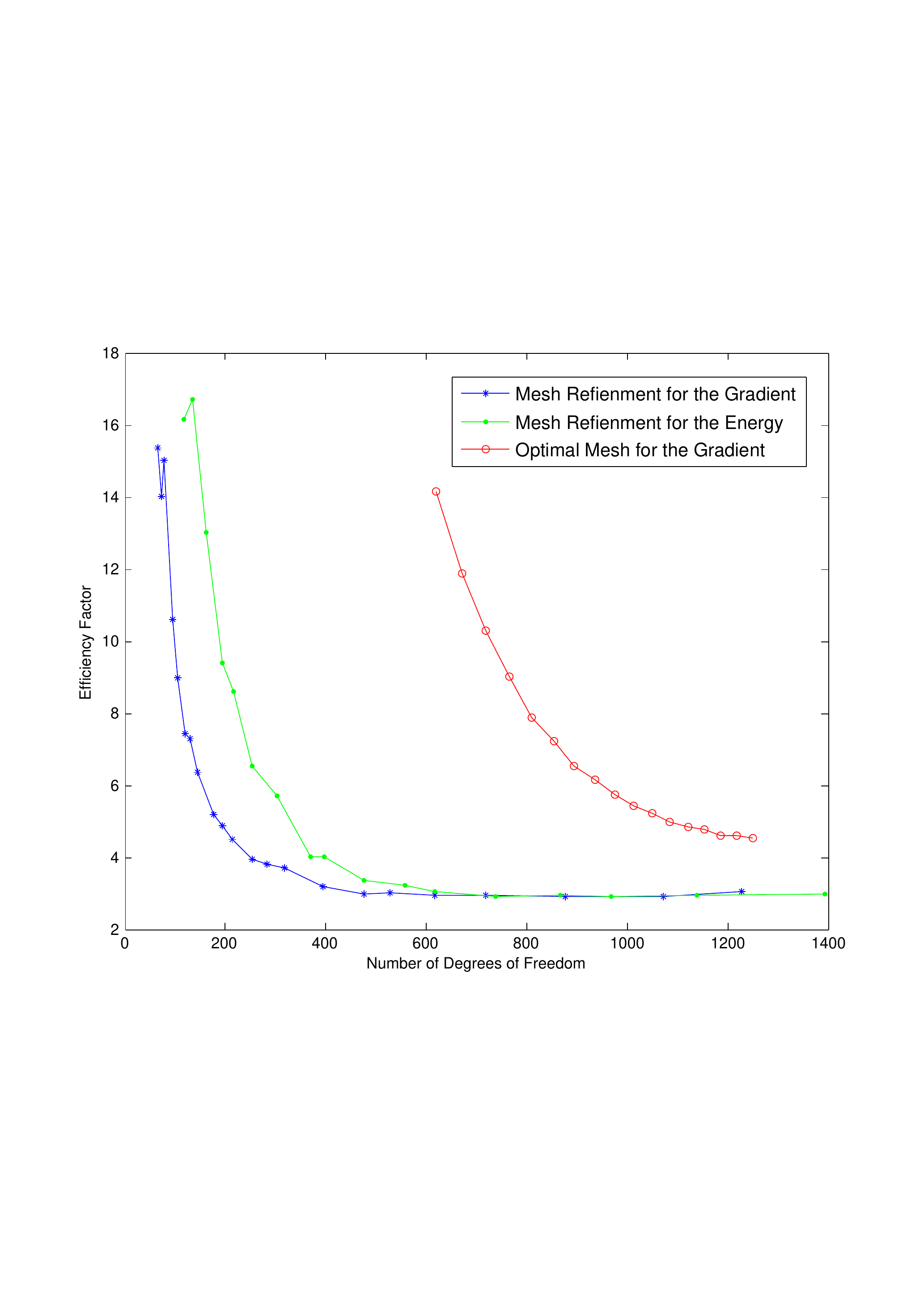}  
      \caption{Efficiency Factor of the Gradient} 
      \label{Fig:Efficiency_Factor_Gradient}
    \end{center}
  \end{minipage}
\end{figure}

Figure \ref{Fig:Relative_Error_in_Gradient} shows that the
pre-defined optimal mesh performs better than the two
mesh refinement strategies for a fix number of degrees of freedom. The
possible reason for this is that, due to some technical difficulty in
coding, both of the mesh refinement algorithms tend to produce larger atomistic
region by merging the elements in the continuum region to the atomistic region and create some unnecessary degrees of freedom. For the two mesh
refinement strategies, the one according to the gradient error indicator perform better asymptotically. 

Figure \ref{Fig:Efficiency_Factor_Gradient} shows the efficiency
factor of the error estimator of the deformation gradient. It shows
that the efficiency factor is comparatively large but decreases as the
number of degrees of freedom increases and
finally become stable. The reason for this phenomenon lies in the form
of the external force. One can show that if the
external force takes the form of $f(r) = \frac{1}{r}$, where $r$ is
the distance to the centre of the defect, then the residual due to the
external force is of order $h^2$ as opposed to order $h$ in general
which is achieved by our analysis. As a result, our estimate
exaggerate the real error by $\frac{1}{h}$ for this particular
external force. This phenomenon gradually disappear as the continuum region moves apart from the centre of the defect since 
the influence of this exaggeration is eliminated as the external force tends to $0$ when it is away from the centre of the defect, which 
makes the residual of the sotred energy become the leading error term. It can also well explain the fact that the efficiency of
the estimate is better for the mesh refinement strategies than the
pre-defined mesh for a certain number of degrees of freedom, as the
two mesh refinement algorithms tend to put more atoms in the atomistic
region, i.e., the continuum region is further away from the centre of
defect than that of the pre-defined mesh.

\begin{figure}[h]

  \begin{minipage}{10.5cm}
  \begin{center}
      \includegraphics[width=10.5cm, bb = 80 220 600 600]{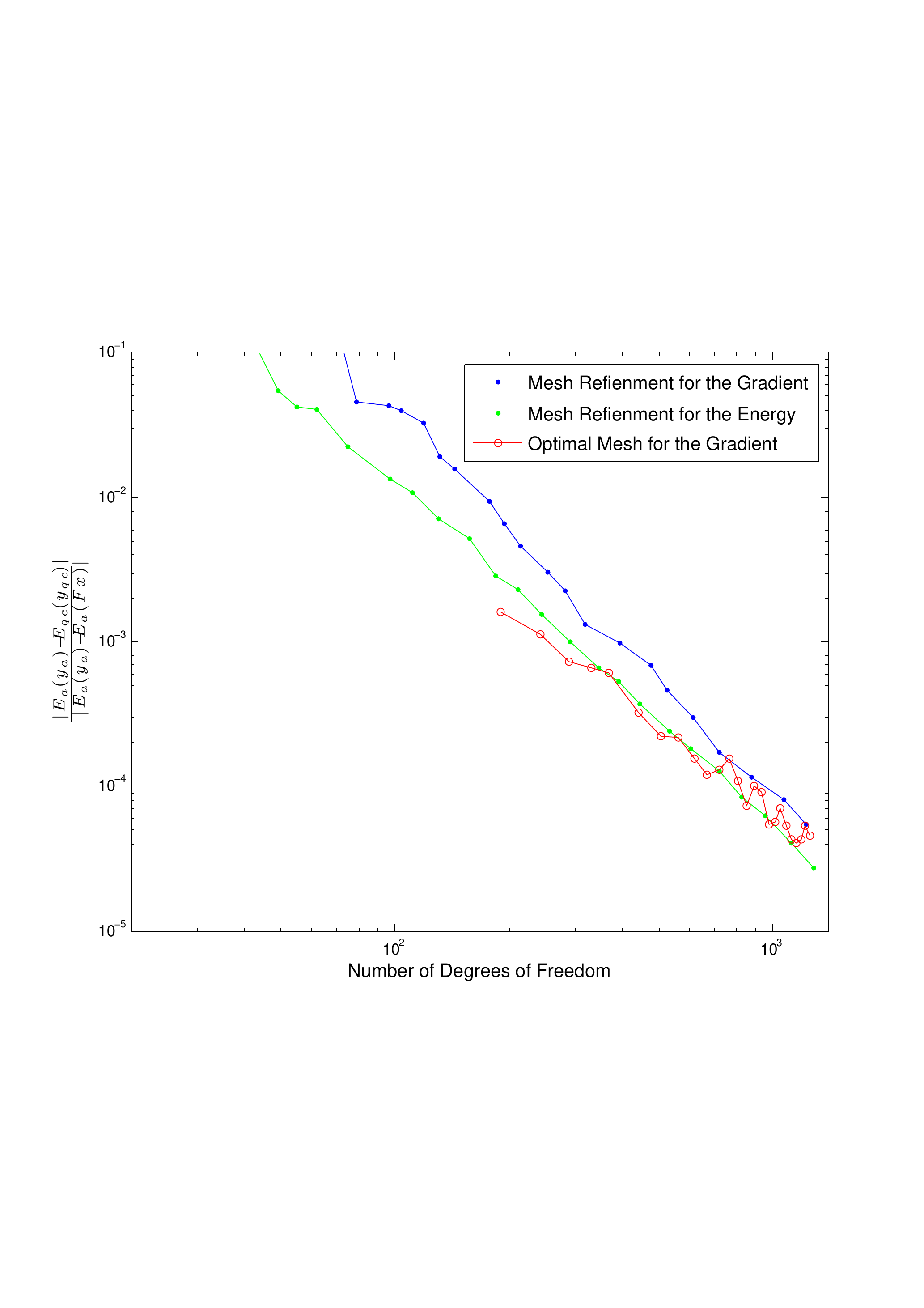}  
      \caption{Relative Error of the Total Energy} 
      \label{Fig:Relative_Error_in_Energy}
 \end{center}
  \end{minipage}
 
\end{figure}

\begin{figure}
  \begin{minipage}{10.5cm}
    \begin{center}
      \includegraphics[width=10.5cm, bb = 80 220 600 600]{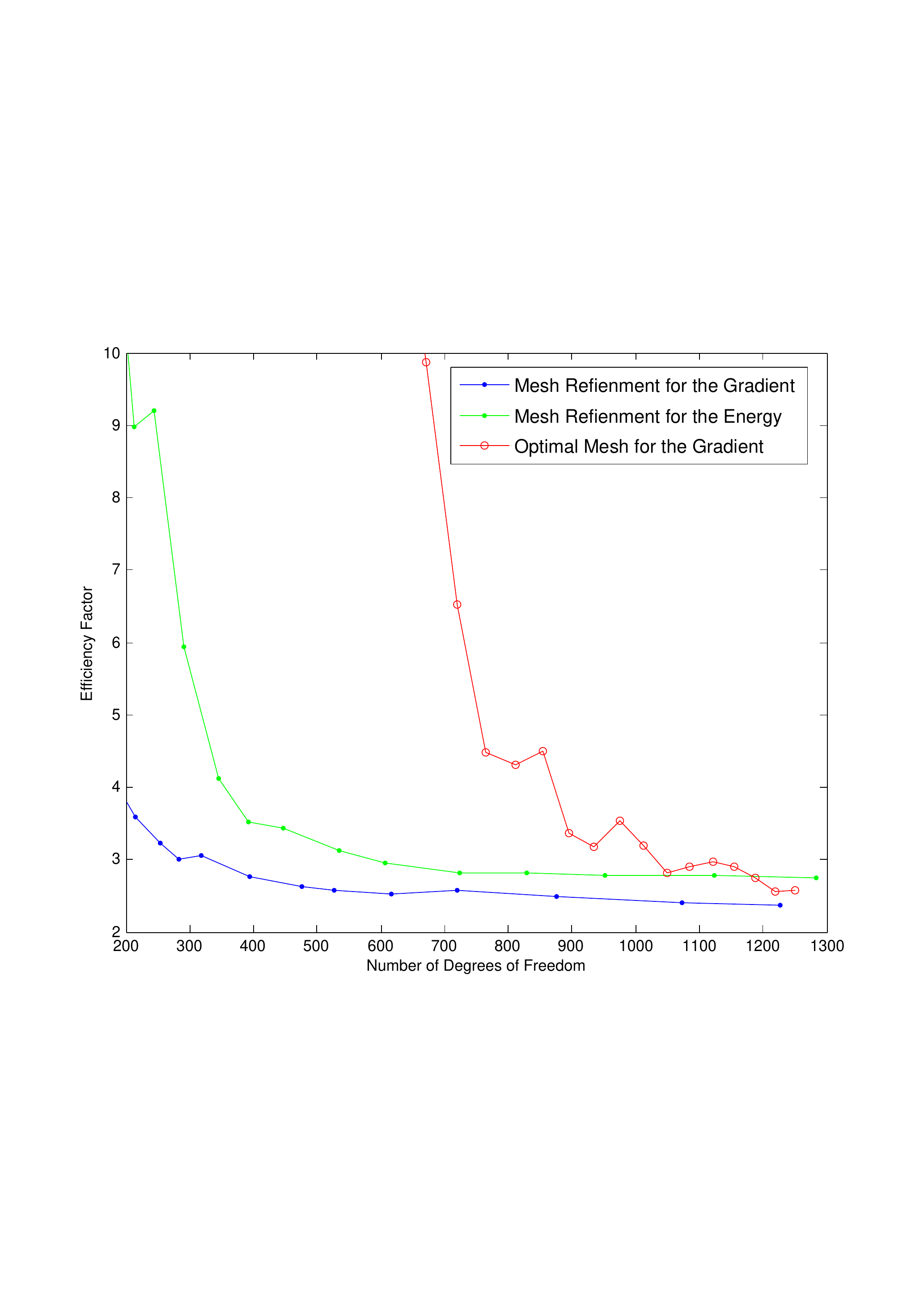}  
      \caption{Efficiency Factor of the Energy} 
      \label{Fig:Efficiency_Factor_Energy}
    \end{center}
  \end{minipage}
\end{figure}

Figure \ref{Fig:Relative_Error_in_Energy} shows that the refinement
based on the energy error performs the best among all the three mesh generating schemes. 

Figure \ref{Fig:Efficiency_Factor_Energy} shows the efficiency
factor of the error estimator of the energy. For the same reason, this
factor decreases as the
number of degrees of freedom increases and finally becomes stable.

\section{Conclusion}
We have presented the a posteriori error estimates for the Consistent
Energy-Based QC method in one dimension. The procedure of the estimate
is the same as that in \cite{Wang:2010a}. However, since the
formulation of the QC problem is newly developed and is totally
different from previous ones, new techniques have been developed and
applied to deal with the difficulty in the analysis. Several results
derived may be of independent interest and usefulness. In addition,
the error estimate of the total energy is also derived. Numerical
experiments are also implemented to illustrate our analysis.

Particular interesting future work are the extension and the
implementation of the a posteriori error estimate in higher
dimensional problems. The difficulty lies in the complication of the
formulation and the varied location of the
interaction bonds. However, since a priori analysis for the two
dimensional problem has been proposed
\cite{OrtShap2010b}, ways of circumventing these difficulties could be
a source of reference.

\appendix
\section{Detailed Analysis for the Residuals of the Stored Energy}
In this section, we provide the omitted detailed analysis for the
residuals of the stored energy, namely
\begin{displaymath}
\Es'_\a(J_{\Us_{\qc}} y_h)[v]-\Es'_{\qc}(y_h)[J_\Us [v] \text{
  and } \Es_\a(J_{\Us_{\qc}} y_h) - \Es_{\qc}(y_h),
\end{displaymath} 
where $y_h \in \Ys_{\qc}$, $y'_h(x) > 0 \ \forall x \in \R$ and $v \in
\Us$.

The idea is to find the differences defined by
\begin{equation}
\label{Eq:ResGradStoreEachBond}
\eps \phi'(r_b D_b  y_h) r_bD_b v -|b\cap \Omega_\a| \phi'(r_b D_{b \cap \Omega_\a} y_h)  D_{b \cap
  \Omega_\a} v -\frac{1}{r_b} \int_{b \cap
  \Omega_c} \phi'(\nabla_{r_b} y_h) \nabla_{r_b} v \dx,
\end{equation}
and 
\begin{equation}
\label{Eq:ResEnStoreEachBond}
\eps \phi(r_b D_b y_h) - |b \cap \Omega_\a|{r_b} \phi\big(r_b D_{b \cap
  \Omega_\a} y \big) - \frac{1}{r_b} \int_{b\cap \Omega_c}
\phi(\nabla_{r_b}y(x))\dx,
\end{equation}
for each interaction bond $b$.

We have analyzed the cases that $b \in \Omega_\a$ and $b \in T_k \cap
\Omega_c$ and are left with the analysis for the cases that $b$ is
across the atomistic-continuum interface and the boundaries of the
elements in the continuum region. There are three cases and in each
case there are three subcases to be considered.

Case 1: $b$ is across two adjacent elements $T_k, T_{k+1} \in
\Omega_c$. In this case $|b \cap \Omega_\a| = 0$ and the atomistic
contribution of the interaction bond in the QC energy is $0$.

Subcase 1: If $b = \big(\ell_k \eps, (\ell_k+1) \eps \big)$, then $r_b = 1$, $b \cap T_k = [\ell_k\eps, x^h_k]$, $b \cap
T_{k+1} = [x^h_k, (\ell_k+1)\eps]$, $r_bD_bv = v'_{\ell_k+1}$ and 
\begin{displaymath}
r_b D_b y_h = \frac{y_h((\ell_k+1)\eps) - y_h(\ell_k\eps)}{\eps} = (1-\theta_k) y_h'|_{T_{k+1}} + \theta_k y_h'|_{T_k}.
\end{displaymath}
We have
\begin{align*}
&\eps \phi'(r_b D_b  y_h) r_bD_b v -\frac{1}{r_b} \int_{b \cap
  \Omega_c} \phi'(\nabla_{r_b} y_h) \nabla_{r_b} v \dx 
\nonumber\\
= & \eps \phi' \big( (1-\theta_k) y_h'|_{T_{k+1}} + \theta_k
y_h'|_{T_k} \big) v'_{\ell_k+1} - \frac{1}{r_b} \phi'(r_b
y_h|_{T_{k+1}}) \int_{b \cap T_k} r_b v' \dx - \frac{1}{r_b} \phi'(r_b
y_h|_{T_{k}}) \int_{b \cap T_{k+1}} r_b v' \dx 
\nonumber\\
= & \eps \bigg\{ \phi'(r_b D_b  y_h) - (1-\theta_k)
  \phi'(y_h'|_{T_{k+1}}) - \theta_k\phi'(\theta_k y_h'|_{T_k}) \bigg\}v'_{\ell_k+1},
\end{align*}
and
\begin{align}
&\eps \phi(r_b D_b y_h) - \frac{1}{r_b} \int_{b\cap \Omega_c}
\phi(\nabla_{r_b}y(x))\dx 
\nonumber\\
=& \eps\phi((1-\theta_k) y_h'|_{T_{k+1}} + \theta_k
y_h'|_{T_k}) - \int_{b \cap T_{k}} \phi(y_h'|_{T_k})\dx -\int_{b
  \cap T_{k+1}} \phi(y_h'|_{T_{k+1}})\dx \nonumber\\
=& \eps \bigg( \phi((1-\theta_k) y_h'|_{T_{k+1}} + \theta_k
y_h'|_{T_k}) - \theta_k \phi(y_h'|_{T_k}) - (1-\theta_k)
\phi(y_h'|_{T_{k+1}}) \bigg).
\end{align}

Subcase 2: If $b = \big((\ell-1)_k \eps, (\ell_k+1) \eps \big)$, then $r_b
= 2$, $b \cap T_k = \big[(\ell_k-1)\eps, x^h_k\big]$, $b \cap T_{k+1} =
\big[x^h_k, (\ell_k+1)\eps\big]$, $r_bD_bv = v'_{\ell_k+1} + v'_{\ell_k}$ and 
\begin{displaymath}
r_b D_b y_h = \frac{y_h(\ell_k\eps) - y_h((\ell_k-1)\eps)}{\eps} = (1-\theta_k) y_h'|_{T_{k+1}} + (1+\theta_k) y_h'|_{T_k}.
\end{displaymath}
We have
\begin{align*}
&\eps \phi'(r_b D_b  y_h) r_bD_b v -\frac{1}{r_b} \int_{b \cap
  \Omega_c} \phi'(\nabla_{r_b} y_h) \nabla_{r_b} v \dx 
\nonumber\\
=&\eps \phi'\big(  (1-\theta_k) y_h'|_{T_{k+1}} + (1+\theta_k)
y_h'|_{T_k} \big) (v'_{\ell_k+1} + v'_{\ell_k}) 
\nonumber\\
&- \frac{1}{r_b} \phi'(r_b
y_h|_{T_{k+1}}) \int_{b \cap T_k} r_b v' \dx - \frac{1}{r_b} \phi'(r_b
y_h|_{T_{k}}) \int_{b \cap T_{k+1}} r_b v' \dx 
\nonumber\\
=&\eps \bigg\{ \big[\phi'\big(  (1-\theta_k) y_h'|_{T_{k+1}} + (1+\theta_k)
y_h'|_{T_k} \big) - \phi'(2y'_h|_{T_k}) \big] v'_{\ell_k} \nonumber \\
&+ \big[\phi'\big(  (1-\theta_k) y_h'|_{T_{k+1}} + (1+\theta_k)
y_h'|_{T_k} \big) - (1-\theta_k) \phi'(2y'_h|_{T_{k+1}}) - \theta_k
\phi'(2y'_h|_{T_{k}})\big]v'_{\ell_k+1} 
\bigg\},
\end{align*}
and
\begin{align}
&\eps \phi(r_b D_b y_h) - \frac{1}{r_b} \int_{b\cap \Omega_c}
\phi(\nabla_{r_b}y(x))\dx 
 \nonumber\\
=& \frac{1}{2}\eps \bigg( 2\phi((1-\theta_k) y_h'|_{T_{k+1}} +
(1+\theta_k) y_h'|_{T_{k}}) - (1+\theta_k) \phi(2y_h'|_{T_k}) - (1-\theta_k)
\phi(2y_h'|_{T_{k+1}}) \bigg).
\end{align}

Subcase 3: If $b = \big(\ell_k \eps, (\ell_k+2) \eps \big)$, then $r_b
= 2$, $b \cap T_k = \big[\ell_k\eps, x^h_k\big]$, $b \cap T_{k+1} =
\big[x^h_k, (\ell_k+2)\eps\big]$, $r_bD_bv = v'_{\ell_k+2} + v'_{\ell_k+1}$ and 
\begin{displaymath}
r_b D_b y_h = \frac{y_h((\ell_k+2)\eps) - y_h(\ell_k\eps)}{\eps} = (2-\theta_k) y_h'|_{T_{k+1}} + \theta_k y_h'|_{T_k}.
\end{displaymath}
We have
\begin{align*}
&\eps \phi'(r_b D_b  y_h) r_bD_b v -\frac{1}{r_b} \int_{b \cap
  \Omega_c} \phi'(\nabla_{r_b} y_h) \nabla_{r_b} v \dx 
\nonumber\\
=&\eps \phi'\big(  (2-\theta_k) y_h'|_{T_{k+1}} + \theta_k y_h'|_{T_k}) (v'_{\ell_k+2} + v'_{\ell_k+1}) 
\nonumber\\
&- \frac{1}{r_b} \phi'(r_b
y_h|_{T_{k+1}}) \int_{b \cap T_k} r_b v' \dx - \frac{1}{r_b} \phi'(r_b
y_h|_{T_{k}}) \int_{b \cap T_{k+1}} r_b v' \dx 
\nonumber\\
=&\eps \bigg\{ \big[\phi'\big(  (2-\theta_k) y_h'|_{T_{k+1}} +
\theta_k y_h'|_{T_k} \big)  - (1-\theta_k)\phi'(2y'_h|_{T_{k+1}}) - \theta_k\phi'(2y'_h|_{T_k}) \big] v'_{\ell_k+1} \nonumber \\
&+ \big[\phi'\big(  (2-\theta_k) y_h'|_{T_{k+1}} + \theta_k
y_h'|_{T_k} \big) - \phi'(2y'_h|_{T_{k+1}})\big]v'_{\ell_k+2} \bigg\},
\end{align*}
and
\begin{align}
&\eps \phi(r_b D_b y_h) - \frac{1}{r_b} \int_{b\cap \Omega_c}
\phi(\nabla_{r_b}y(x))\dx 
 \nonumber\\
=& \frac{1}{2}\eps \bigg( 2\phi((2-\theta_k) y_h'|_{T_{k+1}} +
\theta_k y_h'|_{T_{k}})  - \theta_k
\phi(2y_h'|_{T_{k+1}}) - (2-\theta_k) \phi(2y_h'|_{T_k}) \bigg).
\end{align}

Case 2: $b$ is across the {\it left} atomistic-continuum interface of an
atomistic region. 

Subcase 1: If $b = (\ell_k \eps, \ell_{k+1} \eps)$, then $r_b = 1$, $b \cap \Omega_c = (\ell_k\eps, x^h_k)$,
$b \cap \Omega_\a = \big(x^h_k, (\ell_{k}+1)\eps\big)$, $r_bD_bv = v_{\ell_k+1}'$
and
\begin{displaymath}
r_bD_by_h = (1-\theta_k)y_h|_{T_{k+1}} + \theta_k y'_h|_{T_k}.
\end{displaymath}
We have
\begin{align}
&\eps \phi'(r_b D_b  y_h) r_bD_b v -|b\cap \Omega_\a| \phi'(r_b D_{b \cap \Omega_\a} y_h)  D_{b \cap
  \Omega_\a} v -\frac{1}{r_b} \int_{b \cap
  \Omega_c} \phi'(\nabla_{r_b} y_h) \nabla_{r_b} v \dx
\nonumber\\
=& \eps \bigg[ \phi'\big( (1-\theta_k)y'_h|_{T_{k+1}} + \theta_k
y'_h|_{T_k}\big) - (1-\theta_k)\phi'(y'_h|_{T_{k+1}}) - \theta_k
\phi'(y'_h|_{T_{k}}) \bigg] v'_{\ell_k+1},
\end{align}
and
\begin{align}
&\eps \phi(r_b D_b y_h) - |b \cap \Omega_\a|{r_b} \phi\big(r_b D_{b \cap
  \Omega_\a} y \big) - \frac{1}{r_b} \int_{b\cap \Omega_c}
\phi(\nabla_{r_b}y(x))\dx
\nonumber\\
= &\eps \big( \phi( (1-\theta_k) y'_h|_{T_{k+1}} + \theta_k
y'_h|_{T_k}) - \theta_k\phi( y_h'|_{T_{k}}) -(1-\theta_k) \phi(
y_h'|_{T_{k+1}}) \big)
\end{align}

Subcase 2: If $b = \big((\ell_k-1) \eps, (\ell_{k}+1) \eps \big)$,
then $r_b = 2$, $b \cap \Omega_c = \big((\ell_k-1)\eps, x^h_k)$,
$b \cap \Omega_\a =\big (x^h_k, (\ell_{k}+1)\eps\big)$, $r_bD_bv = v_{\ell_k+1}'+v_{\ell_k}'$
and
\begin{displaymath}
r_bD_by_h = (1-\theta_k)y'_h|_{T_{k+1}} + (1+\theta_k) y'_h|_{T_k},
\quad r_b D_{b \cap \Omega_\a} y_h = 2 y'_h|_{T_{k+1}} 
\end{displaymath}
We have
\begin{align}
&\eps \phi'(r_b D_b  y_h) r_bD_b v -|b\cap \Omega_\a| \phi'(r_b D_{b \cap \Omega_\a} y_h)  D_{b \cap
  \Omega_\a} v -\frac{1}{r_b} \int_{b \cap
  \Omega_c} \phi'(\nabla_{r_b} y_h) \nabla_{r_b} v \dx
\nonumber\\
=& \eps\bigg\{ \big[ \phi'\big( (1-\theta_k)y'_h|_{T_{k+1}} + (1+ \theta_k)
y'_h|_{T_k}\big) - (1-\theta_k)\phi'(2y'_h|_{T_{k+1}}) - \theta_k
\phi'(2y'_h|_{T_{k}})\big]  v'_{\ell_k} \nonumber\\
 &+\big[ \phi'\big( (1-\theta_k)y'_h|_{T_{k+1}} + (1+ \theta_k)
y'_h|_{T_k}\big) - \phi'(2y'_h|_{T_{k}})\big] v'_{\ell_k}  - \theta_k
\phi'(y'_h|_{T_{k}})  v'_{\ell_k+1},
\end{align}
and
\begin{align}
&\eps \phi(r_b D_b y_h) - |b \cap \Omega_\a|{r_b} \phi\big(r_b D_{b \cap
  \Omega_\a} y \big) - \frac{1}{r_b} \int_{b\cap \Omega_c}
\phi(\nabla_{r_b}y(x))\dx \nonumber\\
=&2\eps \big( \phi( (1-\theta_k) y_h'|_{T_{k+1}} + (1+\theta_k)\theta_k y_h'|_{T_k}) 
- (1-\theta_k) \phi(2y_h'|_{T_{k+1}}) - (1+\theta_k)\phi(2y_h'|_{T_{k}}) \big).
\end{align}

Subcase 3: If $b = (\ell_k \eps, (\ell_{k}+2) \eps)$,
then $r_b = 2$, $b \cap \Omega_c = \big(\ell_k\eps, x^h_k)$,
$b \cap \Omega_\a = \big(x^h_k, (\ell_{k}+2)\eps\big)$, $r_bD_bv =
v_{\ell_k+2}'+v_{\ell_k+1}'$, $D_{b \cap \Omega_\a} v =
\frac{1}{2-\theta_k} v'_{\ell_k+2} + \frac{1-\theta_k}{2-\theta_k}v'_{\ell_k+1}$,
and
\begin{displaymath}
r_bD_by_h = y'_h|_{T_{k+2}} + (1-\theta_k)y'_h|_{T_{k+1}} + \theta_k
y'_h|_{T_k} \quad r_b D_{b \cap \Omega_\a} y_h = \frac{2}{2-\theta_k}
y'_h|_{T_{k+2}} + \frac{2(1-\theta_k)}{2-\theta_k} y'_h|_{T_{k+1}}. 
\end{displaymath}
We have
\begin{align}
&\eps \phi'(r_b D_b  y_h) r_bD_b v -|b\cap \Omega_\a| \phi'(r_b D_{b \cap \Omega_\a} y_h)  D_{b \cap
  \Omega_\a} v -\frac{1}{r_b} \int_{b \cap
  \Omega_c} \phi'(\nabla_{r_b} y_h) \nabla_{r_b} v \dx
\nonumber\\
=& \eps\bigg\{ \big[ \phi'\big( y'_h|_{T_{k+2}} +
(1-\theta_k)y'_h|_{T_{k+1}} + \theta_ky'_h|_{T_k}\big) - \phi'(\frac{2}{2-\theta_k}
y'_h|_{T_{k+2}} + \frac{2(1-\theta_k)}{2-\theta_k} y'_h|_{T_{k+1}})\big]  v'_{\ell_k+2} \nonumber\\
 &+\big[ \phi'\big( y'_h|_{T_{k+2}} +
(1-\theta_k)y'_h|_{T_{k+1}} + \theta_ky'_h|_{T_k}\big) - (1-\theta_k) \phi'(\frac{2}{2-\theta_k}
y'_h|_{T_{k+2}} + \frac{2(1-\theta_k)}{2-\theta_k} y'_h|_{T_{k+1}})
\nonumber\\
&\quad \ -\theta_k \phi'(2y'_h |_{T_k}) \big] v'_{\ell_k+1},
\end{align}

Case 2: $b$ is across the {\it right} atomistic-continuum interface of an
atomistic region. 

Subcase 1: If $b = (\ell_k \eps, \ell_{k+1} \eps)$, then $r_b = 1$, $b
\cap \Omega_c = \big(x^h_k, (\ell_{k}+1)\eps\big)$,
$b \cap \Omega_\a = (\ell_k\eps, x^h_k)$, $r_bD_bv = v_{\ell_k+1}'$
and
\begin{displaymath}
r_bD_by_h = (1-\theta_k)y_h|_{T_{k+1}} + \theta_k y'_h|_{T_k}.
\end{displaymath}
We have 
\begin{align}
&\eps \phi'(r_b D_b  y_h) r_bD_b v -|b\cap \Omega_\a| \phi'(r_b D_{b \cap \Omega_\a} y_h)  D_{b \cap
  \Omega_\a} v -\frac{1}{r_b} \int_{b \cap
  \Omega_c} \phi'(\nabla_{r_b} y_h) \nabla_{r_b} v \dx
\nonumber\\
=& \eps \bigg[ \phi'\big( (1-\theta_k)y'_h|_{T_{k+1}} + \theta_k
y'_h|_{T_k}\big) - (1-\theta_k)\phi'(y'_h|_{T_{k+1}}) - \theta_k
\phi'(y'_h|_{T_{k}}) \bigg] v'_{\ell_k+1},
\end{align}
and
\begin{align}
&\eps \phi(r_b D_b y_h) - \frac{1}{r_b} \int_{b\cap \Omega_c}
\phi(\nabla_{r_b}y(x))\dx 
\nonumber\\
=& \eps\phi((1-\theta_k) y_h'|_{T_{k+1}} + \theta_k
y_h'|_{T_k}) - \int_{b \cap T_{k}} \phi(y_h'|_{T_k})\dx -\int_{b
  \cap T_{k+1}} \phi(y_h'|_{T_{k+1}})\dx \nonumber\\
= &\eps \big( \phi( (1-\theta_k) y'_h|_{T_{k+1}} + \theta_k
y'_h|_{T_k}) - \theta_k\phi( y_h'|_{T_{k}}) -(1-\theta_k) \phi( y_h'|_{T_{k+1}}) \big).
\end{align}

Subcase 2: If $b = \big(\ell_k \eps, (\ell_{k}+2) \eps\big)$, then $r_b = 2$, $b
\cap \Omega_c = \big(x^h_k, (\ell_{k}+2)\eps\big)$,
$b \cap \Omega_\a = (\ell_k\eps, x^h_k)$, $r_bD_bv = v'_{\ell_k+2} +
v'_{\ell_k+1}$, $r_bD_{b\cap\Omega_\a} v = v'_{\ell_k+1}$ and
\begin{align}
r_bD_by_h = (2-\theta_k)y'_h|_{T_{k+1}} + \theta_k y'_h|_{T_k} \text{
  and } r_bD_{b\cap \Omega_\a}y_h = y'_h|_{T_k}.
\end{align}
We have
\begin{align}
&\eps \phi'(r_b D_b  y_h) r_bD_b v -|b\cap \Omega_\a| \phi'(r_b D_{b \cap \Omega_\a} y_h)  D_{b \cap
  \Omega_\a} v -\frac{1}{r_b} \int_{b \cap
  \Omega_c} \phi'(\nabla_{r_b} y_h) \nabla_{r_b} v \dx
\nonumber\\
=& \eps \bigg\{ \big[ \phi'\big((2-\theta_k)y'_h|_{T_{k+1}} +
\theta_ky'_h|_{T_k}\big) - \theta_k\phi'\big(2y'_h|_{T_k}) -
(1-\theta_k)\phi'\big(2y'_h|_{T_{k+1}} \big] v'_{\ell_k+1} \nonumber\\
&+\big[ \phi'\big((2-\theta_k)y'_h|_{T_{k+1}} +
\theta_ky'_h|_{T_k}\big) -\phi'(2y'_h|_{T_{k+1}}) \big] v'_{\ell_k+2},
\end{align}
and
\begin{align}
&\eps \phi(r_b D_b y_h) - \frac{1}{r_b} \int_{b\cap \Omega_c}
\phi(\nabla_{r_b}y(x))\dx 
\nonumber\\
=&\eps \big( \phi( (1-\theta_k) y'_h|_{T_{k+1}} + \theta_k
y'_h|_{T_k}) - \theta_k\phi( y_h'|_{T_{k}}) -(1-\theta_k) \phi( y_h'|_{T_{k+1}}) \big).
\end{align}

Subcase 3: If $b = \big((\ell_k-1) \eps, (\ell_{k}+1) \eps\big)$, then $r_b = 2$, $b
\cap \Omega_c = \big(x^h_k, (\ell_{k}+1)\eps\big)$,
$b \cap \Omega_\a = \big((\ell_k-1)\eps, x^h_k\big)$, $r_bD_bv = v'_{\ell_k+1} +
v'_{\ell_k}$, $r_bD_{b\cap\Omega_\a} v = \frac{\theta_k}{1+\theta_k}v'_{\ell_k+1}+\frac{1}{1+\theta_k}v'_{\ell_k}$ and
\begin{align}
r_bD_by_h = (1-\theta_k)y'_h|_{T_{k+1}} + \theta_k y'_h|_{T_k} +
y'_h|_{T_{k-1}} \text{ and } r_bD_{b\cap \Omega_\a}y_h =
\frac{2\theta_k}{1+\theta_k}y'_h|_{T_k} + \frac{2}{1+\theta_k} y'_h|_{T_{k-1}}.
\end{align}
We have
\begin{align}
&\eps \phi'(r_b D_b  y_h) r_bD_b v -|b\cap \Omega_\a| \phi'(r_b D_{b \cap \Omega_\a} y_h)  D_{b \cap
  \Omega_\a} v -\frac{1}{r_b} \int_{b \cap
  \Omega_c} \phi'(\nabla_{r_b} y_h) \nabla_{r_b} v \dx
\nonumber\\
=& \eps \bigg\{ \big[ \phi'\big((1-\theta_k)y'_h|_{T_{k+1}} +
\theta_ky'_h|_{T_k} + y'_h|_{T_{k-1}}\big)
-\phi'(\frac{2\theta_k}{1+\theta_k}y'_h|_{T_{k}} +
\frac{2}{1+\theta_k}y'_h|_{T_{k-1}}) \big] v'_{\ell_k} \nonumber\\
&+\big[ \phi'\big((1-\theta_k)y'_h|_{T_{k+1}} +
\theta_ky'_h|_{T_k} + y'_h|_{T_{k-1}}\big) - \theta_k\phi'(\frac{2\theta_k}{1+\theta_k}y'_h|_{T_{k}} +
\frac{2}{1+\theta_k}y'_h|_{T_{k-1}}) \nonumber\\
&-(1-\theta_k) \phi'(y'_h|_{T_{k+1}}) \big] v'_{\ell_k+1},
\end{align}
and
\begin{align}
&\eps \phi(r_b D_b y_h) - \frac{1}{r_b} \int_{b\cap \Omega_c}
\phi(\nabla_{r_b}y(x))\dx 
\nonumber\\
=&2\eps \big( \phi( (1-\theta_k) y_h'|_{T_{k+1}} + \theta_k y_h'|_{T_k} +
y_h'|_{T_{k-1}}) \nonumber\\
&- (1+\theta_k) \phi\big( \frac{2\theta_k}{1+\theta_k} y_h'|_{T_k} +
\frac{2}{1+\theta_k}y_h'|_{T_{k-1}}\big) - (1-\theta_k)\phi(2y_h'|_{T_{k+1}}) \big).
\end{align}

\section{Approximation Properties}
\label{AppendixA}
In this section, we prove some approximation properties which we have
used but are hardly found in standard text books.

\begin{lemma}
\label{lemmaA1}
Let $v \in C^0(\R) \cap W^{1,2}(\R)$ be a periodic function with
$[a,b]$ being one of its period. Let $v_h$ be a $\Ps_1$ interpolation of $v$
with respect to the nodes $a \le
x_0 < x_1 < \cdots < x_n \le b \le
x_{n+1} = x_0 + (b-a)$ in $[x_0,x_{n+1}]$, subject
to a constant, i.e., $v_h (x_k) = v(x_k)+C$, for $k =
\{1,2,\ldots, n+1 \}$,  and is extended periodically with period $b-a$. Then the
following estimate holds:
\begin{equation}
\|v_h'\|_{L^2_{[a,b]}} \le \|v'\|_{L^2_{[a,b]}},
\end{equation} 
where $v'$ and $v_h'$ denote the weak derivatives of $v$ and $v_h$ respectively.
\end{lemma}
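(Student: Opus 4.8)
The plan is to reduce the global inequality to an elementwise statement and then exploit the fact that linear interpolation is the $L^2$-orthogonal projection of a $W^{1,2}$ function onto the space of continuous piecewise-linear functions (with respect to the $H^1$-seminorm), subject to the matching of nodal values. First I would observe that on each element $[x_{k-1},x_k]$ the interpolant $v_h$ is affine with slope $(v(x_k)-v(x_{k-1}))/(x_k-x_{k-1})$, and that $v-v_h$ vanishes at the two endpoints $x_{k-1}$ and $x_k$; the additive constant $C$ plays no role since it cancels in every derivative. Hence it suffices to prove, on a single interval $I=[x_{k-1},x_k]$,
\begin{equation}
\|v_h'\|_{L^2(I)}^2 \le \|v'\|_{L^2(I)}^2,
\end{equation}
and then sum over the $n+1$ elements comprising one period $[a,b]$.

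The key step is the orthogonality identity. Since $v_h'$ is constant on $I$ and $\int_I (v'-v_h')\dx = (v-v_h)(x_k)-(v-v_h)(x_{k-1}) = 0$, we have
\begin{equation}
\int_I v_h'\,(v'-v_h')\dx = v_h'|_I \int_I (v'-v_h')\dx = 0.
\end{equation}
Therefore
\begin{equation}
\|v'\|_{L^2(I)}^2 = \|v_h' + (v'-v_h')\|_{L^2(I)}^2 = \|v_h'\|_{L^2(I)}^2 + \|v'-v_h'\|_{L^2(I)}^2 \ge \|v_h'\|_{L^2(I)}^2,
\end{equation}
which is the desired elementwise bound. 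Squaring, summing over $k=1,\dots,n+1$, and using additivity of the $L^2$-norm over the partition of $[a,b]$ (together with $\|v_h'\|_{L^2[a,b]}=\|v_h'\|_{L^2[x_0,x_{n+1}]}$ by periodicity) yields $\|v_h'\|_{L^2[a,b]}^2 \le \|v'\|_{L^2[a,b]}^2$, and taking square roots concludes the proof.

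The only mild technical point — and the place where a little care is needed — is the justification that $\int_I (v'-v_h')\dx = (v-v_h)(x_k)-(v-v_h)(x_{k-1})$ when $v'$ is merely a weak derivative; this follows from the absolute continuity of $W^{1,2}$ functions in one dimension, so that the fundamental theorem of calculus applies to $v$, and trivially to the affine function $v_h$. Everything else is an elementary Pythagorean decomposition, so there is no real obstacle; the argument is essentially the standard "interpolation is the energy projection" observation specialized to piecewise $\Ps_1$ functions in one dimension.
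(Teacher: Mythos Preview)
Your proof is correct and follows essentially the same strategy as the paper: reduce to an elementwise inequality using that $v_h'|_I$ equals the mean of $v'$ over $I$, then sum. The only cosmetic difference is that the paper derives the elementwise bound via Cauchy--Schwarz on $\big(\int_I v'\big)^2 \le |I|\int_I (v')^2$, whereas you obtain it from the Pythagorean identity $\|v'\|_{L^2(I)}^2 = \|v_h'\|_{L^2(I)}^2 + \|v'-v_h'\|_{L^2(I)}^2$; these are two phrasings of the same fact, and your version has the minor advantage of yielding the exact decomposition rather than just the inequality.
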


\begin{proof}
First we note that, since $v \in C^0(\R)$ and $v_h$ is a $\Ps_1$
interpolation of $v$,
the weak derivative of $v_h$ on $(x_{k}, x_{k+1})$ is defined by
\begin{equation*}
v_h' (x)=
\frac{v(x_{k+1})-v(x_{k})}{x_{k+1}-x_{k}}.
\end{equation*}
Since $v \in C^0(\R)$ is piecewise differentiable, we have 
\begin{equation*}
v(x_{k+1})-v(x_{k}) = \int_{x_{k}}^{x_{k+1}}
v'(t) \dt,
\end{equation*}
where $v'$ is the weak derivative of $v$.
By the periodicity of $v_h'$ and $v'$, and Cauchy-Schwarz Inequality, we have
\begin{align*}
\|v_h'\|^2_{L^2_{[a,b]}} 
& = \int_a^b \big[v_h'(x)\big]^2 \dx \\
& = \int_ {x_0}^{x_{n+1}} \big[ v_h'(x) \big]^2\dx\\
& = \sum_{k = 0}^{n}
\int_{x_{k}}^{x_{k+1}}  (\frac{v(x_{k+1})-v(x_{k})}{x_{k+1}-x_{k}})^2
\dx\\
& = \sum_{k = 0}^{n} \int_{x_{k}}^{x_{k+1}} \frac{(\int_{x_{k}}^{x_{k+1}}
v'(t) \dt)^2}{(x_{k+1}-x_{k})^2} \dx\\
& \le \sum_{k = 0}^{n} \frac{1}{x_{k+1}-x_{k}}
(\int_{x_{k}}^{x_{k+1}} \dt)
(\int_{x_{k}}^{x_{k+1}}|v'(t)|^2\dt)\\
& = \sum_{k = 0}^{n}\int_{x_{k}}^{x_{k+1}}|v'(t)|^2\dt\\
& = \int_a^b |v'(t)|^2\dt\\
& = \|v'\|^2_{L^2_{[a,b]}}.
\end{align*}
Taking the square root on both sides gives the stated result.
\end{proof}

\begin{lemma}
\label{lemmaA2}
Let $v \in C^0([a,b]) \cap W^{1,2}([a,b])$ and $I_hv$ is the $\Ps_1$
function that interpolates $v$ at the points $a$ and $b$. We have the
following inequality:
\begin{equation}
\|v'-(I_hv)'\|^2_{L^2_{(a,b)}} \le  \|v'\|^2_{L^2_{(a,b)}}.
\end{equation}
\end{lemma}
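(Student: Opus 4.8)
The plan is to recognize that $(I_h v)'$ is precisely the mean value of $v'$ over $(a,b)$, equivalently the $L^2(a,b)$-orthogonal projection of $v'$ onto the one-dimensional subspace of constant functions; once this is observed, the inequality is an instance of the Pythagorean identity (subtracting a best approximation cannot increase the norm). First I would note that, since $v \in C^0([a,b]) \cap W^{1,2}([a,b])$, $v$ is absolutely continuous on $[a,b]$, so that $v(b)-v(a) = \int_a^b v'(t)\dt$; consequently the weak derivative of the affine interpolant is the constant
\begin{equation*}
(I_h v)'(x) = \frac{v(b)-v(a)}{b-a} = \frac{1}{b-a}\int_a^b v'(t)\dt =: c \qquad \text{for a.e. } x \in (a,b).
\end{equation*}

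Next I would simply expand the square. Writing
\begin{equation*}
\|v'-(I_hv)'\|_{L^2(a,b)}^2 = \|v'\|_{L^2(a,b)}^2 - 2c\int_a^b v'(t)\dt + c^2(b-a)
\end{equation*}
and substituting $\int_a^b v'(t)\dt = c(b-a)$ collapses the last two terms to $-c^2(b-a)$, giving $\|v'-(I_hv)'\|_{L^2(a,b)}^2 = \|v'\|_{L^2(a,b)}^2 - c^2(b-a) \le \|v'\|_{L^2(a,b)}^2$, which is the claim. The discarded non-negative term $c^2(b-a)$ is exactly what the Cauchy--Schwarz bound $\big(\int_a^b v'\big)^2 \le (b-a)\int_a^b |v'|^2$ controls, so this argument runs along the same lines as the estimate already carried out in the proof of Lemma \ref{lemmaA1}.

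There is no genuine obstacle here: the only point meriting a word of care is the identity $v(b)-v(a)=\int_a^b v'(t)\dt$, which relies on the absolute continuity afforded by $v \in W^{1,2}([a,b])$ — the same fact that is used implicitly in Lemma \ref{lemmaA1}. Everything else is the elementary observation that replacing $v'$ by its average over $(a,b)$ can only decrease the $L^2$ norm.
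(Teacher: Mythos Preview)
Your proof is correct and follows essentially the same approach as the paper: both recognize that $(I_h v)'$ is the $L^2$-orthogonal projection of $v'$ onto the constants (equivalently, the mean of $v'$), and then use this to bound the norm of the difference. The only cosmetic difference is that the paper phrases the last step via the best-approximation property and sets the comparison constant $C=0$, whereas you carry out the equivalent Pythagorean expansion explicitly; the content is the same.
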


\begin{proof}
Since $v(a) = I_hv(a)$ and $v(b) =
I_hv(b)$, by the definition of $I_hv$, we have
\begin{equation*}
\int_a^bv'\dx = \int_a^b (I_hv)' dx,
\end{equation*}
and equivalently,
\begin{equation*}
\int_a^b\big(v'-(I_hv)'\big) \cdot 1 \dx = 0,
\end{equation*}
where $v'$ denotes the weak derivative of $v$ on $[a,b]$. This shows
that $(I_hv)'$ is the best $L^2$ approximation of $v'$ in
the space of $\Ps_0$ functions as $(I_hv)'$ is a constant. Therefore, by the
property of best approximation,
\begin{equation}
\|v'-(I_hv)'\|^2_{L^2_{(a,b)}} \le  \|v'-C\|^2_{L^2_{(a,b)}},
\end{equation}
for any constant $C$. In particular, if we choose $C$ to be $0$, the
stated result holds.
\end{proof}

\section{Discrete Sobolev Inequalities on Non-uniform mesh}
\label{AppendixB}
In this section, we prove some discrete Soblev inequalities on
non-uniform mesh that are used in the residual analysis for the
external force. These results are extensions to the inequalities proved in
\cite[Lemma A.1, Lemma A.2, Theorem A.4]{Ortner:2008a} on non uniform
mesh.

\begin{lemma}
\label{Lemma:PrePoincare}
Let $\gb \in \R^L$, $\epsb^0, \epsb^1 \in \R^L$ and $\eps^0_i, \eps^1_i>0 \  \forall i = 1,
\ldots, L$, $\gb' = (g'_i)_{i=2}^L\in \R^{L-1}$, $g_i' := \frac{g_i-g_{i-1}}{\eps^1} \ i = 2,\ldots,L$. If $\sum_{i=1}^L\eps^0_i g_i = 0$, then 
\begin{equation}
|g_i| \le \frac{1}{h} \sum_{i=2}^L\bar{\eps}^1_k |g_k'| \phi_{i,k},
\end{equation}
where, $h = \sum_{i=1}^L\eps^0_i$, $\phi_{i,k} = \sum_{\ell=1}^{k-1}\eps^0_{\ell}$ for $k =
2,\ldots,i$ and  $\phi_{i,k} = \sum_{\ell=k}^{L}\eps^0_{\ell}$ for $k =i+1,\ldots,L$.
\end{lemma}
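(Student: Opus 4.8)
The plan is to combine the elementary telescoping identity for $g_i-g_j$ with the mean-zero constraint on $\gb$. For any pair of indices with $j<i$ we have $g_i-g_j=\sum_{k=j+1}^{i}(g_k-g_{k-1})$, and for $j>i$ we have $g_i-g_j=-\sum_{k=i+1}^{j}(g_k-g_{k-1})$, with the convention that the empty sum (the case $j=i$) is zero. Multiplying by the positive weight $\eps^0_j$ and summing over $j=1,\dots,L$ gives
\[
\sum_{j=1}^{L}\eps^0_j\,(g_i-g_j)=\Big(\sum_{j=1}^{L}\eps^0_j\Big)g_i-\sum_{j=1}^{L}\eps^0_j g_j = h\,g_i ,
\]
since $h=\sum_{j=1}^{L}\eps^0_j$ and $\sum_{j=1}^{L}\eps^0_j g_j=0$ by hypothesis.

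The main step is the interchange of the order of summation, which is precisely what produces the coefficients $\phi_{i,k}$. Splitting the $j$-sum into $j<i$ and $j>i$ and inserting the two telescoping expressions,
\[
h\,g_i=\sum_{j=1}^{i-1}\eps^0_j\sum_{k=j+1}^{i}(g_k-g_{k-1})-\sum_{j=i+1}^{L}\eps^0_j\sum_{k=i+1}^{j}(g_k-g_{k-1}).
\]
In the first double sum the pairs $(j,k)$ range over $1\le j\le k-1$, $2\le k\le i$, so, fixing $k\in\{2,\dots,i\}$ and summing over $j\in\{1,\dots,k-1\}$, the term $g_k-g_{k-1}$ acquires the weight $\sum_{\ell=1}^{k-1}\eps^0_\ell=\phi_{i,k}$. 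In the second double sum the pairs range over $k\le j\le L$, $i+1\le k\le L$, so, fixing $k\in\{i+1,\dots,L\}$ and summing over $j\in\{k,\dots,L\}$, the term $g_k-g_{k-1}$ acquires the weight $\sum_{\ell=k}^{L}\eps^0_\ell=\phi_{i,k}$. Using $g_k-g_{k-1}=\eps^1_k g'_k$ we obtain the exact identity
\[
h\,g_i=\sum_{k=2}^{i}\eps^1_k g'_k\,\phi_{i,k}-\sum_{k=i+1}^{L}\eps^1_k g'_k\,\phi_{i,k}.
\]

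Finally, since every $\phi_{i,k}\ge 0$, taking absolute values and applying the triangle inequality merges the two sums into $\sum_{k=2}^{L}\eps^1_k|g'_k|\,\phi_{i,k}$; dividing by $h>0$ yields the stated bound. I do not expect a genuine obstacle: the argument is a telescoping identity, a change of summation order, and the triangle inequality. The one place demanding care is the bookkeeping in the change of order --- checking that after fixing $k$ the remaining index $j$ runs over exactly $\{1,\dots,k-1\}$ in the $j<i$ block and over exactly $\{k,\dots,L\}$ in the $j>i$ block, so that the accumulated $\eps^0$-weights reproduce the two branches of the definition of $\phi_{i,k}$.
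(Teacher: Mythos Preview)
Your proof is correct and follows essentially the same approach as the paper: use the mean-zero constraint to write $h g_i = \sum_j \eps^0_j (g_i - g_j)$, telescope each difference, and interchange the order of summation to produce the weights $\phi_{i,k}$. The only cosmetic difference is that you first derive the exact signed identity $h g_i = \sum_{k=2}^{i}\eps^1_k g'_k\,\phi_{i,k} - \sum_{k=i+1}^{L}\eps^1_k g'_k\,\phi_{i,k}$ and then apply the triangle inequality, whereas the paper applies the triangle inequality to each $|g_i - g_j|$ before interchanging sums; both routes yield the same bound.
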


\begin{proof}
Let $i \in \{1,\ldots,L\}$, then
\begin{align*}
h|g_i| & = |hg_i - \sum_{j=1}^{L} \eps^0_j g_j|\\
          & = |\sum_{j=1}^{L} \eps^0_j g_i - \sum_{j=1}^{L} \eps^0_j g_j|\\
          & \le \sum_{j=1}^{i-1}\eps^0_j|g_i-g_j|+\sum_{j=i+1}^{L}\eps^0_j |g_i-g_j|.
\end{align*}
Since 
\begin{equation*}
|g_i-g_j| = |\sum_{k=j+1}^i \eps^1_k g'_k|,
\end{equation*}
we have
\begin{align*}
h|g_i|  &\le \sum_{j=1}^{i-1} \eps^0_j \sum_{k = j+1}^i \eps^1_k
|g'_k|+ \sum_{j=i+1}^{L} \eps^0_j \sum_{k = i+1}^j \eps^1_k|g'_k|\\
         &= \sum_{k=2}^i  \eps^1_k |g'_k| \big(\sum_{j= 1}^{k-1}\eps^0_j\big) + \sum_{k=i+1}^L  \eps^1_k |g'_k| \big(\sum_{j= k}^{L}\eps^0_j\big)\\
         &= \sum_{k=2}^{L}\eps^1_k|g'_k|\phi_{i,k}.          
\end{align*} 
Divide both sides by $h$, we obtain the stated result.
\end{proof}

\begin{lemma}
\label{Lemma:DiscPoincareNonUniMesh}
(Discrete Poincare's Inequality) Suppose that $L \ge 1$, $\epsb^0, \epsb^1 \in \R^L$ with $\eps^0_i, \eps^1_i > 0$, $\forall i =1, \ldots, L $. 
Let $\gb \in \R^{L}$ such that  $\sum_{i=1}^L\eps^0_i g_i = 0$ and $\gb' = (g'_i)_{i=2}^L \in \R^{L-1}$ such that $g_i' =
\frac{g_i-g_{i-1}}{\eps^1_i}$. Define $\mathcal{D}_0$ to be the set $\{1, \ldots,
L\}$ and $\mathcal{D}_1$ to be the set $\{2, \ldots, L\}$,
then
\begin{equation}
\Vert \gb \Vert _{\ell^{p}_{\epsb^0 (\mathcal{D}_0)}} \le \frac{1}{2} \frac{L^2 \max\{\max_{1 \le i \le L} \eps^0_i, \max_{2 \le k \le L} \eps^1_k\}^2}{h}
 \Vert \gb' \Vert _{\ell^{p}_{\epsb^1(\mathcal{D}_1)}},
\end{equation}
for $p \in \{1, \infty\}$, where $\ h = \sum_{i=1}^L\eps^0_i$.
\end{lemma}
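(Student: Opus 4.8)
The plan is to read off the stated inequality from the pointwise estimate of Lemma~\ref{Lemma:PrePoincare}, handling the two endpoint exponents $p=\infty$ and $p=1$ separately (the statement concerns only these two). Write $m:=\max\{\max_{1\le i\le L}\eps^0_i,\ \max_{2\le k\le L}\eps^1_k\}$; Lemma~\ref{Lemma:PrePoincare} then gives
\begin{equation*}
|g_i|\ \le\ \frac{1}{h}\sum_{k=2}^{L}\eps^1_k\,|g'_k|\,\phi_{i,k},\qquad i=1,\ldots,L,
\end{equation*}
where $\phi_{i,k}=\sum_{\ell=1}^{k-1}\eps^0_\ell$ for $k\le i$ and $\phi_{i,k}=\sum_{\ell=k}^{L}\eps^0_\ell$ for $k>i$.

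For $p=\infty$ I would bound $\eps^1_k\le m$ and pull $\max_{2\le k\le L}|g'_k|=\Vert\gb'\Vert_{\ell^\infty_{\epsb^1(\mathcal{D}_1)}}$ out of the sum, so that it remains to estimate $\sum_{k=2}^{L}\phi_{i,k}$. The key elementary step is the exchange-of-order identity
\begin{equation*}
\sum_{k=2}^{L}\phi_{i,k}\ =\ \sum_{\ell=1}^{L}\eps^0_\ell\,|i-\ell|,
\end{equation*}
obtained by counting, for each fixed $\ell$, how many indices $k\in\{2,\ldots,L\}$ make $\eps^0_\ell$ appear in $\phi_{i,k}$ (exactly $|i-\ell|$ of them). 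Since $\eps^0_\ell\le m$ and $\sum_{\ell=1}^{L}|i-\ell|\le\smfrac{1}{2}L(L-1)\le\smfrac{1}{2}L^2$, this yields $|g_i|\le\smfrac{1}{2}(L^2m^2/h)\,\Vert\gb'\Vert_{\ell^\infty_{\epsb^1(\mathcal{D}_1)}}$ for every $i$, i.e.\ the claim.

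For $p=1$ I would instead multiply the pointwise bound by $\eps^0_i$, sum over $i$, and interchange summations to obtain $\Vert\gb\Vert_{\ell^1_{\epsb^0(\mathcal{D}_0)}}\le\smfrac{1}{h}\sum_{k=2}^{L}\eps^1_k|g'_k|\bigl(\sum_{i=1}^{L}\eps^0_i\phi_{i,k}\bigr)$. Here the analogous elementary computation is the factorisation
\begin{equation*}
\sum_{i=1}^{L}\eps^0_i\,\phi_{i,k}\ =\ 2\Bigl(\sum_{i=1}^{k-1}\eps^0_i\Bigr)\Bigl(\sum_{\ell=k}^{L}\eps^0_\ell\Bigr),
\end{equation*}
after which I bound the two factors by $(k-1)m$ and $(L-k+1)m$ and use $(k-1)(L-k+1)\le L^2/4$; the leftover sum $\sum_{k=2}^{L}\eps^1_k|g'_k|$ is exactly $\Vert\gb'\Vert_{\ell^1_{\epsb^1(\mathcal{D}_1)}}$, and the constant again collapses to $\smfrac{1}{2}L^2m^2/h$.

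The one place needing care is the bookkeeping in these two rearrangements: the crude bound $\phi_{i,k}\le h$ would cost an extra factor of $L$, so one must retain the ``distance'' weights $|i-\ell|$ (respectively the product of the two partial sums of the $\eps^0$'s) in order to arrive at the advertised constant $\smfrac{1}{2}L^2m^2/h$. Beyond Lemma~\ref{Lemma:PrePoincare} and these counting arguments, no further ingredients are required.
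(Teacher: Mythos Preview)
Your proof is correct and follows essentially the same route as the paper: both start from the pointwise bound of Lemma~\ref{Lemma:PrePoincare}, treat $p=1$ and $p=\infty$ separately, and reduce to the same combinatorial estimate $(k-1)(L-k+1)\le L^2/4$. Your intermediate identities $\sum_{k=2}^{L}\phi_{i,k}=\sum_{\ell}\eps^0_\ell|i-\ell|$ and $\sum_{i}\eps^0_i\phi_{i,k}=2\bigl(\sum_{i<k}\eps^0_i\bigr)\bigl(\sum_{\ell\ge k}\eps^0_\ell\bigr)$ are cleaner than the paper's bare-handed bounds, but the argument is the same.
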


\begin{proof}
Using the result of Lemma \ref{Lemma:PrePoincare}, we have
\begin{align*}
\sum_{i=1}^L \eps^0_i |g_i| 
&\le \sum_{i=1}^L \frac{\eps^0_i}{h}
\sum_{k=2}^i \eps^1_k |g_k'| \phi_{i,k} +\sum_{i=1}^L \frac{\eps^0_i}{h}
\sum_{k=i+1}^L \eps^1_k |g_k'| \phi_{i,k}\\
& = \frac{1}{h} \bigg[ \sum_{k=2}^L \big(\sum_{i=1}^L \eps^0_i \phi_{i,k}\big) \eps^1_k |g_k'| \bigg].
\end{align*}
Since 
\begin{displaymath}
\sum_{i = 1}^L \eps^0_i \phi_{i,k} \le \max_{1\le i \le L} \eps^0_i
\sum_{i = 1}^L \phi_{i,k} = \max_{1 \le i \le L} \eps^0_i \bigg[ \sum_{i=1}^{k-1}\phi_{i,k} + \sum_{i=k}^{L}\phi_{i,k} \bigg],
\end{displaymath}
and
\begin{align*}
\sum_{i=1}^{k-1}\phi_{i,k} + \sum_{i=k}^{L}\phi_{i,k} 
& \le (k-1)
\sum_{\ell=k}^L\eps^0_{\ell} +
\big(L-(k-1)\big)\sum_{\ell=1}^{k-1}\eps^0_{\ell}\\
& \le \big[(k-1) \big(L-(k-1)\big) +\big(L-(k-1)\big)  (k-1)\big]
\max_{1 \le i \le L} \eps^0_i \\
& \le \frac{1}{2} \max_{1 \le i \le L} \eps^0_i L^2.
\end{align*}
Put these results together, we obtain the stated result for $p = 1$.
For $p = \infty$, 
\begin{align*}
|g_i| &\le \frac{1}{h} \sum_{k = 2}^{L} \eps^1_k |g_k'|
\phi_{i,k}\\
& \le \frac{1}{h} \bigg[ \sum_{k=2}^i \eps^1_k |g_k'|
\phi_{i,k} + \sum_{k=i+1}^L \eps^1_k |g_k'|
\phi_{i,k} \bigg]\\
& \le \frac{1}{h} \sum_{k = 2}
^ L \phi_{i,k} \max_{2 \le k \le L} \eps^1_k |g_k'| \\
& \le \frac{1}{2} \frac{L^2 \max_{1\le i \le L}\eps^0_i}{h} \max_{2 \le k \le L} \eps^1_k |g_k'|.
\end{align*}
The stated result is obtained by taking the maximum of $\eps^0_i$ and $\eps^1_k$ over $\Ds_0$ and $\Ds_1$.
\end{proof}

\begin{lemma}
\label{Lemma:DiscFriedrichNonUniMesh}
(Discrete Friedrichs' Inequality) Suppose that $L \ge 1$, $\epsb^0$, $\epsb^1$,
$\mathcal{D}_0$, $\mathcal{D}_2$ are the same as in Lemma \ref{Lemma:DiscPoincareNonUniMesh}. Let
$\fb \in \R^{L}$ such that $f_1 = f_L=0$, and $\fb' =
(f_i')_{i=2}^{L} \in \R^{L-1}$ such that $f_i' =
\frac{f_i-f_{i-1}}{\eps^1_i}$, then
\begin{equation}
\Vert \fb \Vert _{\ell^{p}_{\epsb^0 (\mathcal{D}_0)}} \le \frac{1}{2} (L-1)
\max_{2\le i \le L-1} \max\{\eps^0_i, \eps^1_i\} \Vert \fb' \Vert _{\ell^{p}_{\epsb^1 (\mathcal{D}_1)}},
\end{equation}
for $p \in \{1, \infty\}$.
\end{lemma}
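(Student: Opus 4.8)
The plan is to follow the same template as the proof of Lemma~\ref{Lemma:DiscPoincareNonUniMesh}, with the two-sided homogeneous boundary condition $f_1 = f_L = 0$ taking over the role that the zero-mean constraint played there. The starting point is the pair of telescoping identities valid for every $i \in \{1,\dots,L\}$: since $f_1 = 0$,
\begin{equation*}
f_i = \sum_{k=2}^{i} \eps^1_k f'_k ,
\end{equation*}
and since $f_L = 0$,
\begin{equation*}
f_i = -\sum_{k=i+1}^{L} \eps^1_k f'_k .
\end{equation*}
Taking absolute values and keeping the smaller of the two sums gives $|f_i| \le \min\!\bigl(\sum_{k=2}^{i}\eps^1_k|f'_k|,\ \sum_{k=i+1}^{L}\eps^1_k|f'_k|\bigr)$, and averaging the two identities gives the cruder but symmetric bound $2|f_i| \le \sum_{k=2}^{L}\eps^1_k|f'_k|$.

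For $p = \infty$ I would use the first (refined) estimate: bounding each of the two partial sums by its number of terms times $\max_{2\le k\le L}\eps^1_k|f'_k|$ gives $|f_i| \le \min(i-1,\,L-i)\,\max_{2\le k\le L}\eps^1_k|f'_k|$, and since $\min(i-1,L-i)\le \tfrac12(L-1)$ and $\eps^1_k \le \max\{\eps^0_k,\eps^1_k\}$, taking the maximum over $i$ yields the stated inequality. For $p = 1$ I would instead weight the one-sided identity by $\eps^0_i$, sum over $i$, and interchange the order of summation,
\begin{equation*}
\sum_{i=1}^{L}\eps^0_i|f_i| \le \sum_{i=1}^{L}\eps^0_i \sum_{k=2}^{i}\eps^1_k|f'_k| = \sum_{k=2}^{L}\Bigl(\sum_{i=k}^{L}\eps^0_i\Bigr)\eps^1_k|f'_k| ,
\end{equation*}
then bound $\sum_{i=k}^{L}\eps^0_i$ by the number of its terms times $\max_i\eps^0_i$; symmetrizing against the other one-sided identity (so that each index $k$ is charged at most $\tfrac12(L-1)$ weights rather than $L-1$) recovers the factor $\tfrac12$, and replacing $\max_i\eps^0_i$ and $\max_i\eps^1_i$ by the common bound $\max_{2\le i\le L-1}\max\{\eps^0_i,\eps^1_i\}$ finishes the case. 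No interpolation step is needed here, since the claim is asserted only for $p\in\{1,\infty\}$.

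The only real obstacle is bookkeeping: obtaining the sharp constant $\tfrac12(L-1)$ — rather than the $L-1$ or $L$ that the naive argument produces — requires choosing, index by index, the more economical of the two telescoping representations and counting the contributing terms carefully, and then collapsing the two separate mesh-weight sequences $\epsb^0$ and $\epsb^1$ into the single factor $\max\{\eps^0_i,\eps^1_i\}$ over the correct index range. None of these steps is conceptually difficult, but the ranges of summation and of the maxima must be tracked precisely for the constant to come out as stated.
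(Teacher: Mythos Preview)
Your proposal is correct and follows essentially the same approach as the paper: both use the two telescoping identities coming from $f_1=0$ and $f_L=0$, combine them via $\min(i-1,L-i)\le\tfrac12(L-1)$ for $p=\infty$, and via averaging for $p=1$. The only cosmetic difference is that for $p=1$ the paper uses the averaged bound $2|f_i|\le\sum_{k=2}^{L}\eps^1_k|f'_k|$ directly (which you also wrote down) and then simply estimates $\sum_{i=2}^{L-1}\eps^0_i\le(L-1)\max_i\eps^0_i$, whereas you describe the slightly longer route of interchanging sums first and symmetrizing afterwards; the direct route is shorter and avoids the bookkeeping you flag at the end.
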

\begin{proof}
For $p = 1$,
\begin{align*}
\sum_{i = 1}^L \eps^0_i |f_i| 
& = \sum_{i = 2}^{L-1} \eps^0_i |f_i|\\
& = \frac{1}{2} \sum_{i=2}^{L-1} \eps^0_i \big[ |\sum_{j = 2}^i (f_j-f_{j-1})| + |\sum_{j = i+1}^L(f_j-f_{j-1})| \big]\\
& \le \frac{1}{2} \sum_{i=2}^{L-1} \eps^0_i\big[ \sum_{j = 2}^i \eps^1_j |f_j'| + \sum_{j = i+1}^L \eps^1_j |f_j'|\big]\\
& = \frac{1}{2} \sum_{i=2}^{L-1}\eps^0_i \sum_{j = 1}^{L} \eps^1_j |f_j'|\\
&\le \frac{1}{2} (L-1) \max_{2\le i \le L-1} \eps^0_i \sum_{j = 1}^{L} \eps^1_j |f_j'|.
\end{align*}
For $p = \infty$, 
\begin{displaymath}
|f_i| \le \sum_{j = 2}^i \eps^1_j|f_j'| =  (i-1)  \max_{2\le j \le L} \eps^1_j \max_{2\le j \le L} |f_j'|,
\end{displaymath}
and 
\begin{displaymath}
|f_i| \le \sum_{j = i+1}^L \eps^1_j |f_j'| =  (L-i) \max_{2\le j \le L} \eps^1_j \max_{2\le j \le L}|f_j'|.
\end{displaymath}
Thus we have
\begin{align*}
\max_{i \in \mathcal{D}_0} |f_i| &\le \min (i-1, L-i)  \max_{2\le j \le L} \eps^1_j \max_{2\le j \le L}|f_j'| \\
 & \le \frac{1}{2} (L-1) \max_{2\le j \le L} \eps^1_j \max_{2\le j \le L}|f_j'|.
\end{align*}
\end{proof}

\begin{remark}
The bounds we have got here are not optimal as if $\eps_i$'s and
$\bar{\eps}_j$'s vary too much, taking the maximum of them in the
inequalities could significantly reduce the sharpness of the estimate. However, for the
analysis of this paper, such a bound is optimal enough to produce
efficient error estimators and we leave the work of looking for optimal
bounds to future work.
\end{remark}

\begin{theorem}
\label{Theo:BoundOnIntErr}
(bounds on the interpolation error) Let $L \ge 1$, $\epsb^0, \epsb^1,\epsb^2\in \R^{L}$, 
with $\eps^0_i, \eps^1_i, \eps^2_i>0\ \forall i =1, \ldots, L $. Let
$\fb \in \R^L$ and $\Fb = \in \R^L$ such that $F_1 = f_1$ and
\begin{equation}
F_i = f_1 + \frac{\sum_{j=2}^i\eps^0_i}{h}(f_L-f_1) \quad i = 2,
\ldots, L , 
\end{equation}
where $h = \sum_{i=2}^{L}\eps^0_i$. Define $\fb' = (f_i')_{i=2}^{L} \in
\R^{L-1}$ such that $f_i'= \frac{f_i-f_{i-1}}{\eps^1_i}$ and $\fb'' = (f_i'')_{i=2}^{L-1} \in
\R^{L-2}$ such that $f_i''= \frac{f'_{i+1}-f'_{i}}{\eps^2_i}$, and $\Fb'$ and $\Fb''$ are defined
in the same way. Let $\mathcal{D}_0$, $\mathcal{D}_1$ be the same sets defined in Lellmma \ref{Lemma:DiscPoincareNonUniMesh} and
$\mathcal{D}_2$ be the set $\{2, \ldots, L-1\}$. Then, for $p \in
\{1, \infty \}$, 
\begin{equation}
\| \fb - \Fb \|_{\ell_{\epsb^0}^p (\mathcal{D}_0)} \le \frac{1}{4}
\frac{L^3 \max_{2 \le i \le L-1} \eps_i^0 \max_{2 \le j \le L-1}
  \eps^1_j \max_{2 \le k \le L-1}  \eps^2_j}{h} \|\fb''\|_{\ell_{\epsb^2}^p (\mathcal{D}_2)}.
\end{equation}
\end{theorem}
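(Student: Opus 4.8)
\emph{Plan.} I would set $\gb := \fb - \Fb$ and estimate $\|\gb\|_{\ell^p_{\epsb^0}(\Ds_0)}$ by chaining the discrete Friedrichs and Poincar\'e inequalities of Appendix \ref{AppendixB}. The first thing to record are the elementary structural facts about $\Fb$ and $\gb$ that make this possible: since $F_L = f_1 + \frac{1}{h}\big(\sum_{j=2}^L \eps^0_j\big)(f_L - f_1) = f_L$, we have $g_1 = g_L = 0$; the first difference $\Fb'$ is constant, $F_i' = (f_L - f_1)/h$ for all $i$, so $\Fb'' = 0$ and hence $\gb'' = \fb''$ on $\Ds_2$; and summation by parts gives $\sum_{i=2}^L \eps^1_i g_i' = g_L - g_1 = 0$, i.e.\ $\gb'$ has vanishing $\epsb^1$-weighted mean over $\{2,\dots,L\}$. (The clean form of the final constant — each of the three mesh vectors appearing exactly once — reflects the regime in which the result is applied, where $\epsb^0$ and $\epsb^1$ are the same underlying mesh; this is precisely what makes $\Fb$ discretely affine and what lets the denominators of the two chained estimates agree.)

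With these facts in hand the bound is a two-step argument. First, because $g_1 = g_L = 0$, Lemma \ref{Lemma:DiscFriedrichNonUniMesh} applied to $\gb$ with the mesh pair $(\epsb^0,\epsb^1)$ gives $\|\gb\|_{\ell^p_{\epsb^0}(\Ds_0)} \le \smfrac{1}{2}(L-1)\big(\max_{2\le i\le L-1}\max\{\eps^0_i,\eps^1_i\}\big)\|\gb'\|_{\ell^p_{\epsb^1}(\Ds_1)}$. Second, the vector $\gb' = (g_i')_{i=2}^L$ has $L-1$ components, weighted $\epsb^1$-mean zero, and its own first difference is $\gb'' = (g_i'')_{i=2}^{L-1}$ with weights $\epsb^2$; so Lemma \ref{Lemma:DiscPoincareNonUniMesh}, applied with the relabelling $L\mapsto L-1$, $\epsb^0\mapsto\epsb^1$, $\epsb^1\mapsto\epsb^2$, gives $\|\gb'\|_{\ell^p_{\epsb^1}(\Ds_1)} \le \smfrac{1}{2}\smfrac{(L-1)^2}{h}\big(\max_{2\le j\le L-1}\max\{\eps^1_j,\eps^2_j\}\big)^2\|\gb''\|_{\ell^p_{\epsb^2}(\Ds_2)}$, the denominator being $\sum_{i=2}^L\eps^1_i = h$. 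Multiplying the two inequalities, replacing $\gb''$ by $\fb''$, bounding $(L-1)^3\le L^3$ and distributing the maxima over $\epsb^0$, $\epsb^1$, $\epsb^2$ then yields exactly the stated estimate for both $p=1$ and $p=\infty$.

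The only non-routine point — the main obstacle — is the bookkeeping that makes each mesh vector enter the final constant once and leaves a single power of $1/h$: this is what dictates applying Friedrichs with the pair $(\epsb^0,\epsb^1)$ and Poincar\'e with the pair $(\epsb^1,\epsb^2)$, rather than applying a Poincar\'e-type inequality twice, and it is the place where the interpolation structure $\Fb'=\text{const}$, $\Fb''=0$, $\gb''=\fb''$ is genuinely used. Once those identities and the reduction to the two appendix lemmas are in place, no further computation is required.
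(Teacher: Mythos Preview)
Your proposal is correct and follows essentially the same route as the paper: set $\gb=\fb-\Fb$, use $g_1=g_L=0$ to invoke the discrete Friedrichs inequality (Lemma~\ref{Lemma:DiscFriedrichNonUniMesh}), then use $\sum_i \eps^1_i g_i'=0$ to invoke the discrete Poincar\'e inequality (Lemma~\ref{Lemma:DiscPoincareNonUniMesh}) on $\gb'$, and finally replace $\gb''$ by $\fb''$ via $\Fb''=0$. You are in fact more explicit than the paper about the implicit identification $\epsb^0=\epsb^1$ needed to make $\Fb'$ constant and to match the denominator $h$; the paper simply asserts $\Fb''=0$ without comment.
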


\begin{proof}
Let $\gb = \fb - \Fb$, by the definition of $\Fb$, we have $g_1 = g_L = 0$ and 
\begin{displaymath}
\sum_{i=2}^L\eps_ig_i' = \sum_{i=2}^L(f_i-f_{i-1})
-\sum_{i=2}^L(F_i-F_{i-1}) = 0.
\end{displaymath} 
By Lemma \ref{Lemma:DiscFriedrichNonUniMesh}, 
\begin{displaymath}
\| \gb \|_{\ell_{\epsb^1}^p (\mathcal{D}_0)} \le \frac{1}{2} (L-1)
\max_{2\le i \le L-1} \max\{\eps^0_i, \eps^1_i\} \|\gb'\|_{\ell_{\epsb}^p (\mathcal{D}_1)},
\end{displaymath}
as $g_1 = g_L = 0$, and by Lemma \ref{Lemma:DiscPoincareNonUniMesh}, 
\begin{displaymath}
\|\gb'\|_{\ell_{\epsb}^p (\mathcal{D}_1)} \le \frac{1}{2} \frac{L^2 \max\{\max_{1 \le i \le L} \eps^1_i, \max_{2 \le k \le L-1} \eps^2_k\}^2}{h} \|\gb''\|_{\ell_{\bar{\epsb}}^p (\mathcal{D}_2)},
\end{displaymath}
as $\sum_{i=2}^L\eps_i g_i' = 0$. Since $\Fb''=0$, from which we know
$\gb'' = \fb''$, the stated estimate holds.
\end{proof}
\bibliographystyle{plain}
\bibliography{qc1}
\end{document}